\title[Optimal boundary regularity of  harmonic maps]{Optimal boundary regularity of harmonic maps from $RCD(K,N)$-spaces to $CAT(0)$-spaces}
\author{Hui-Chun Zhang}
\address{Department of Mathematics\\  Sun Yat-sen University\\ Guangzhou 510275\\ \newline E-mail address: zhanghc3@mail.sysu.edu.cn}
\author{Xi-Ping Zhu}
\address{Department of Mathematics\\  Sun Yat-sen University\\ Guangzhou 510275\\ \newline E-mail address: stszxp@mail.sysu.edu.cn}
 \newtheorem{theorem}{Theorem}[section]
\newtheorem{proposition}[theorem]{Proposition}
\newtheorem{lemma}[theorem]{Lemma}
\theoremstyle{definition}
\theoremstyle{remark}
\newtheorem{exam}[theorem]{Example}
\newtheorem{defn}[theorem]{Definition}
\newtheorem{remark}[theorem]{Remark}
\numberwithin{equation}{section}
\newcommand{\ls}{\leqslant}
\newcommand{\gs}{\geqslant}
\newcommand{\ip}[2]{\left<{#1},{#2}\right>}
\newcommand{\meas}{\mathfrak{m}}
\begin{document}

%\today

\begin{abstract}
  In 1983, Schoen-Uhlenbeck \cite{SU83} established boundary regularity for energy-minimizing maps between smooth manifolds with the Dirichlet boundary condition under the assumption that both the boundary and the data are of  $C^{2,\alpha}$. A natural problem is to study the qualitative boundary behavior of harmonic maps with rough boundary and/or non-smooth boundary data.

For the special case where $u$ is a harmonic function on a domain $\Omega\subset \mathbb R^n$, this problem has been extensively studied (see, for instance,  the monograph \cite{Kenig94},  the proceedings of ICM 2010 \cite{Tor10} and the recent work of Mourgoglou-Tolsa \cite{MT24}).  The $W^{1,p}$-regularity ($1<p<\infty$) has been well-established when $\partial\Omega$ is Lipschitz (or even more general) and the boundary data belongs to $W^{1,p}(\partial\Omega)$. However, for the endpoint case where the boundary data is Lipschitz continous,  as demonstrated by Hardy-Littlewood's classical examples \cite{HL32},  the gradient $|\nabla u|(x)$ may have logarithmic growth as $x$ approaches the boundary $\partial \Omega$ even if the boundary is smooth.

In this paper, we first establish a version of the Gauss-Green formula for bounded domains in $RCD(K, N)$ metric measure space. We then apply it to obtain the optimal boundary regularity of harmonic maps from  $RCD(K, N)$ metric measure spaces into $CAT(0)$ metric spaces. Our result is new even for harmonic functions on Lipschitz domains of Euclidean spaces.\\[3pt]

\noindent{\bf Key words}: Dirichlet heat kernel, boundary regularity, harmonic maps.\\
\noindent{\bf MSC 2020}: 58E20, 53C23
\end{abstract}

\maketitle
%\tableofcontents
%\setcounter{tocdepth}{1}

\section{Introduction}

In this paper we study the boundary regularity of harmonic maps between singular metric spaces with the prescribed Dirichlet boundary conditions.

\subsection{Boundary regularity of harmonic maps between smooth spaces}
In the setting of smooth spaces, Qing \cite{Qing93} proved that any weakly harmonic map from surfaces is smooth up to the boundary when both the boundary and the data are smooth. Schoen-Uhlenbeck \cite{SU83}  showed the following boundary regularity for energy-minimizing maps between smooth manifolds, assuming $C^{2,\alpha}$-regularity on both the boundary and the data.
\begin{theorem}[Schoen-Uhlenbeck \cite{SU83}]
\label{BR-thm-1.1}
Let $M$ be a compact manifold with $C^{2,\alpha}$ boundary $\partial M$. Suppose $u\in W^{1,2}(M,N)$ is an energy-minimizing map and satisfies $u(x)\in N_0$ a.e. for a compact subset $N_0$ in a smooth Riemannian manifold $N$. Suppose that $u_0\in C^{2,\alpha}(\partial M,N_0)$ and $u=u_0$ on $\partial M$. Then $u$ is $C^{2,\alpha}$ in a full neighborhood of $\partial M.$
\end{theorem}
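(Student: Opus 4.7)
The plan is to reduce the problem to a local half-ball model via a $C^{2,\alpha}$ boundary chart, establish a boundary monotonicity formula and the corresponding boundary $\varepsilon$-regularity theorem, and then bootstrap from continuity to $C^{2,\alpha}$ via Schauder estimates.

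First I would flatten $\partial M$ near a boundary point $p$ using a $C^{2,\alpha}$ chart, replacing a neighborhood of $p$ by a half-ball $B_r^+ \subset \mathbb{R}^n$ whose flat face $T_r := B_r \cap \{x_n = 0\}$ carries the prescribed data $u_0$. By Nash's theorem I embed $N \hookrightarrow \mathbb{R}^K$ isometrically, so the target inherits a tubular neighborhood $\mathcal{T}$ of $N_0$ with smooth nearest-point projection $\pi \colon \mathcal{T} \to N_0$. I also extend $u_0$ to an ambient $C^{2,\alpha}$ map $U \colon B_r^+ \to \mathbb{R}^K$. The energy-minimizing hypothesis passes through the chart up to lower-order perturbations, and the comparison maps that will drive the estimates are built by interpolating between $u$ and $U$ and composing with $\pi$.

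The technical heart is a boundary monotonicity formula: for $x_0 \in T_r$, the quantity
\begin{equation*}
e^{Cr}\Bigl(r^{2-n}\int_{B_r^+(x_0)} |\nabla u|^2 \, dx + Cr\Bigr)
\end{equation*}
is nondecreasing in $r$, where $C$ depends only on the $C^{2,\alpha}$ norms of the chart and of $u_0$. The interior formula comes from testing minimality against the radial retraction of $u|_{\partial B_r}$; at the boundary the same test is inadmissible because it destroys the trace, so one instead glues the radial retraction of $u|_{\partial B_r^+ \cap \{x_n>0\}}$ to the smooth lifting $U$ along $T_r$, controlling the error by the $C^{2,\alpha}$ norm of $U$. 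From this one derives the boundary $\varepsilon$-regularity statement: there exist $\varepsilon_0 > 0$ and $\gamma \in (0,1)$ such that
\begin{equation*}
r^{2-n} \int_{B_r^+(x_0)} |\nabla u|^2\,dx < \varepsilon_0 \quad \Longrightarrow \quad u \in C^\gamma\bigl(B_{r/2}^+(x_0)\bigr).
\end{equation*}
This is proved by a blow-up/compactness argument: if it fails, rescaled minimizers converge weakly to a tangent map with zero trace on a hyperplane and vanishing normalized energy, hence identically constant; combined with the $C^{2,\alpha}$ data one concludes that $u$ stays close to $u_0(x_0)$ on a definite scale, which in turn gives Hölder continuity. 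The smallness hypothesis itself is verified at every boundary point by exploiting that $u_0 \in C^{2,\alpha}$ makes the comparison between $u$ and $U$ yield $r^{2-n}\int_{B_r^+}|\nabla u|^2 = O(r)$, uniformly in the center.

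Once $u$ is continuous up to $T_r$, for $r$ small the image $u(B_r^+)$ lies in a single normal coordinate chart of $N$, in which the harmonic map equation becomes the elliptic system $\Delta u^\alpha + \Gamma^\alpha_{\beta\gamma}(u)\,\nabla u^\beta \!\cdot\! \nabla u^\gamma = 0$ with $C^{2,\alpha}$ Dirichlet data on $T_r$. Tangential difference quotients combined with Schauder theory upgrade the regularity step by step: $C^0 \to C^{\alpha} \to C^{1,\alpha} \to C^{2,\alpha}$, after which the normal derivatives are recovered from the equation itself. The main obstacle, and the genuinely new ingredient beyond the interior theory, is the boundary monotonicity of Step~2 together with the compactness argument for the boundary $\varepsilon$-regularity; handling the curvature of $\partial M$ and the non-constancy of $u_0$ as perturbative error terms, and verifying that the tangent map at a boundary blow-up really has vanishing trace and energy, is where all the $C^{2,\alpha}$ smoothness of the data is actually consumed.
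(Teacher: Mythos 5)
The paper does not supply a proof of this theorem; it quotes it verbatim from Schoen--Uhlenbeck \cite{SU83} as background motivation for the boundary-regularity problem that the paper then studies in the non-smooth $RCD/CAT(0)$ setting. There is therefore no ``paper proof'' to compare against, and your sketch must be judged on its own merits as a reconstruction of the argument in \cite{SU83}.

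Your overall architecture --- flattening via a $C^{2,\alpha}$ boundary chart, Nash embedding, boundary monotonicity formula, boundary $\varepsilon$-regularity via blow-up and compactness, then a Schauder bootstrap from $C^0$ up to $C^{2,\alpha}$ --- is exactly the strategy of \cite{SU83}, and most of it is sound. There is, however, one genuine gap in the way you verify the smallness hypothesis. You claim that comparison against the smooth extension $U$ yields an a priori decay $r^{2-n}\int_{B_r^+}|\nabla u|^2=O(r)$ uniformly in the boundary center, and you use this to enter the $\varepsilon$-regularity regime. That step does not work: the comparison only shows that the boundary-monotone quantity is \emph{bounded} as $r\to 0$, not that it decays at a rate. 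In \cite{SU83}, the vanishing of the normalized boundary energy is established by a dimension-reduction argument --- a boundary tangent map is a degree-zero homogeneous minimizer on a half-space with constant Dirichlet data on the bounding hyperplane, and a Liouville-type/reflection argument forces it to be constant, so the boundary density must be $0$ --- and this gives decay with no a priori rate (the $O(r)$ bound you write is only recoverable a posteriori from the regularity being proved). Relatedly, your blow-up step silently invokes strong $W^{1,2}$-compactness of rescaled energy minimizers, which is itself a nontrivial lemma (established in the interior theory of Schoen--Uhlenbeck and carried over to the boundary setting) and should be surfaced explicitly rather than assumed.
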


A natural problem is to study {\emph{the qualitative boundary behavior of a harmonic map $u:\Omega\to N$ with rough boundary $\partial\Omega$ and/or non-smooth boundary data $u_0$}}.

For the special case when $u$ is a harmonic function on $\Omega\subset \mathbb R^n$, this problem has been widely researched.  For example, the Wiener criterion for $\partial \Omega$ ensures the continuity of harmonic functions up to the boundary when the boundary data is continuous.   Fabes-Jodeit-Riv\'ere \cite{FJR78}investigated harmonic functions on $C^1$-domains with $L^p(\partial\Omega)$-data. Jerison-Kenig \cite{JK81,JK82} considered harmonic functions in $W^{1,p}$-domains (or NTA-domains) and $L^p$-boundary data.    Shen \cite{Shen07} studied the $L^p$ boundary value problems on Lipschitz domains.      Mitrea-Mitrea-Yan \cite{MMY10} analyzed harmonic functions on $\Omega$ under the assumption that $\partial\Omega$  is Lipschitz and satisfies an exterior ball condition, and that the data $u_0$   is in $W^{1,p} $ for $1<p<\infty$. Recently, Mourgoglou-Tolsa \cite{MT24}  established the $W^{1,p}$-regularity ($1<p<\infty$) when the boundary measure of $\partial\Omega$ is Ahlfors-David regular (a more general condition than  Lipschitz regularity)  and the boundary data is in $W^{1,p}(\partial \Omega)$. For further references, we refer the readers to the monograph \cite{Kenig94}, the proceedings of ICM 2010 \cite{Tor10} and the references in \cite{MT24}.
On the other hand, for the endpoint case where the boundary data $u_0$ is Lipschitz continous,  Hile-Stanoyevitch \cite{HS99} (after  Hardy-Littlewood \cite{HL32}) proved that if $\partial \Omega\in C^2$ and $u_0$ is Lipschitz,  the gradient $|\nabla u|(x)$  grows logarithmically as $x$ approaches the boundary $\partial \Omega$ when  $\partial \Omega\in C^2$.  

The Wiener criterion has been extended from harmonic functions to harmonic maps from bounded Lipschitz domains, whose image lies in a convex ball \cite{ACM91}. Jost-Meier \cite{Jost-M83} have established the H\"older boundary regularity for minima of certain quadratic functionals, assuming $C^1$-continuity of the boundary $\partial \Omega$.

In this paper, we will prove the following gradient estimates of harmonic maps near the boundary. Indeed, it is a corollary of our main result --- Theorem \ref{BR-thm-1.8-main}.
\begin{theorem}\label{BR-thm-1.2}
 Let  $\Omega$ be a bounded domain in an $n$-dimensional Riemannian manifold  $M^n$ with boundary $\partial\Omega$, which is Lipschitz continuous and satisfies a uniformly exterior ball condition, $n\gs2$. Let $N$ be a Hadamard manifold (i.e.,  simply connected manifold with non-positive sectional curvature). Then 
  %Suppose that $\partial \Omega$ is Lipschitz continuous and satisfies a uniformly exterior ball condition% and the $(n-1)$-dimensional Hausdorff measure of $\partial \Omega$ satisfies
% \begin{equation}\label{equation-1-1}
%\frac{\mathscr H^{n-1}(\partial \Omega\cap B_r(x))}{r^{n-1}}\ls c_0<\infty,\quad \forall x\in\partial\Omega,\quad \forall r\in(0,2).
% \end{equation}
for any $u_0\in Lip ({\overline\Omega},N)$ with Lipschitz constant $L>0$,
the  energy-minimizing map $u\in W^{1,2}(\Omega,N)$ with   $u=u_0$ on $\partial \Omega$ satisfies the following gradient estimate
  \begin{equation}\label{equation-1-2}
  	|\nabla u|(x)\ls c  L\cdot \ln\left(\frac{2e\cdot {\rm diam}(\Omega)}{d(x,\partial\Omega)}\right),\qquad \forall x\in \Omega,
\end{equation}
where the constant $c$ depends only on the domain $\Omega.$
\end{theorem}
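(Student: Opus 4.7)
\emph{Proof plan.} The result is a specialization of the paper's main Theorem~\ref{BR-thm-1.8-main}. A smooth Riemannian manifold with a locally uniform lower Ricci bound is an $RCD(K,n)$ space, any Hadamard manifold is $CAT(0)$ (Cartan-Hadamard), and the assumption that $\partial\Omega$ is Lipschitz with a uniform exterior ball condition is exactly the boundary regularity required by that theorem. The estimate \eqref{equation-1-2} is therefore a direct consequence.

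It is nevertheless instructive to record the strategy, which in the scalar model is the classical Hardy-Littlewood-Hile-Stanoyevitch scheme. For a harmonic function $u$ with Lipschitz boundary data $u_0$, the Poisson representation gives $u(x)=\int_{\partial\Omega}P(x,y)\,u_0(y)\,d\sigma(y)$, and since $\int_{\partial\Omega}\nabla_x P(x,y)\,d\sigma(y)=0$ (the constant $1$ is its own harmonic extension),
\[
|\nabla u(x)| \;\leq\; L\!\int_{\partial\Omega}|\nabla_x P(x,y)|\,|y-\pi(x)|\,d\sigma(y),
\]
where $\pi(x)\in\partial\Omega$ is a nearest boundary point. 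On a Lipschitz domain with uniform exterior ball condition one has $|\nabla_x P(x,y)|\lesssim|x-y|^{-n}$, so with $r:=d(x,\partial\Omega)$ and $s=|y-\pi(x)|$,
\[
|\nabla u(x)| \;\lesssim\; L\!\int_0^{\mathrm{diam}(\Omega)}\!\frac{s^{n-1}}{(r+s)^n}\,ds
\;\lesssim\; L\Bigl(1+\log(\mathrm{diam}(\Omega)/r)\Bigr),
\]
which is \eqref{equation-1-2} in the scalar case.

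To upgrade this scalar argument to maps into a $CAT(0)$ target, one uses: (i) the paper's new Gauss-Green formula on $RCD(K,N)$ domains, which supplies the correct substitute for the Poisson representation via the Dirichlet heat kernel $p^D_t$; and (ii) the Eells-Sampson-type Bochner inequality for harmonic maps into Hadamard targets, which makes $d^2(u,\cdot)$ subharmonic and hence furnishes the interior gradient estimate $|\nabla u|(x)\lesssim r^{-1}\,\mathrm{osc}_{B(x,r/2)}u$. Together these transfer the logarithmic bound to the nonlinear setting.

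The principal obstacle is the Gauss-Green formula itself and the associated sharp Dirichlet-heat-kernel bounds on an $RCD(K,N)$ domain with only Lipschitz-plus-exterior-ball boundary --- these are the new technical ingredients advertised in the abstract. Once they are in hand, the classical scheme above yields \eqref{equation-1-2} with the optimal logarithm.
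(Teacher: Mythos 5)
Your proposal is correct and follows the same route as the paper: Theorem~\ref{BR-thm-1.2} is deduced as a direct corollary of Theorem~\ref{BR-thm-1.8-main}, using that a smooth manifold with a Ricci lower bound is an $RCD(K,n)$ space, a Hadamard manifold is $CAT(0)$, and a bounded Lipschitz boundary is perimetrically regular in the sense of Definition~\ref{BR-def-1.5}(2). One small imprecision worth fixing: Theorem~\ref{BR-thm-1.8-main} asks for \emph{perimetric regularity} plus the exterior ball condition, not Lipschitz regularity; the Lipschitz hypothesis is a strictly stronger assumption that \emph{implies} perimetric regularity (Remark~\ref{BR-remark-1.6}(1)), and this implication is the one-line content of the paper's own proof, so it should be stated explicitly rather than described as ``exactly the boundary regularity required.''
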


\begin{remark}\label{BR-rem-1.3}
(1)  Theorem \ref{BR-thm-1.2} is new even for harmonic functions on Euclidean domains.  Recall that Hardy-Littlewood \cite{HL32}  established (\ref{equation-1-2}) for harmonic functions  $u$ on  the half-space $\Omega:=\{(x,x_n)\in\mathbb R^n|\ x_n>0, x\in \mathbb R^{n-1}\}$.  Hile-Stanoyevitch \cite{HS99} extended this estimate to domains with $C^2$ boundaries via the flattening of the boundaries. However, the flattening technique does not work for Lipschitz boundaries as in Theorem \ref{BR-thm-1.2}. Indeed, we will show that (\ref{equation-1-2}) remains valid under more general assumptions that the boundaries are perimetrically regular (see Definition \ref{BR-def-1.5}(2)). 

 (2)  The exterior ball condition in Theorem \ref{BR-thm-1.2} is nearly indispensable, as illustrated in the following simple examples. Let $a\in(1/2,1)$ and consider the cone
 $$C_a:=\{(\rho,\theta) |\ 0<\rho<+\infty,\ 0<\theta<\pi/a\} $$
with  the vertex $O=(0,0)$, where $(\rho, \theta)$ are the polar coordinates in $\mathbb R^2$. Then the domain $\Omega_a:=C_a\cap B_1(O)$ satisfies only an exterior cone condition at  $O$ with the cone angle $(2-1/a)\pi<\pi$. A direct calculation shows that the function $f_a:=\rho^{a}\sin(a\theta)$ is harmonic on $\Omega_a$ and has Lipschitz boundary data (indeed $f_a(x)=0$ for any $x\in\partial C_a\cap B_1(O)$). Moreover,
 $$|\nabla f_a|(x)\geqslant \left|\frac{\partial f_a}{\partial \rho}\right|(x)=a\rho^{a-1}=a\cdot d^{a-1}(x,\partial\Omega_a),\quad {\rm where}\ \ x=(\rho,\frac{\pi}{2a}).$$

 (3) The assumption that the target $N$ is a Hadamard manifold can be replaced by the assumption that $N$ is a manifold with curvature bounded from above by $\kappa>0$ and that the image of $u$ is contained in a ball of radius  $<\pi/(2\sqrt\kappa)$. See Remark \ref{BR-remark-6.2} for the precise statement for more general $CAT(\kappa)$ target spaces.
\end{remark}

\subsection{Boundary regularity of harmonic maps between singular spaces and the main result} From  Gromov-Schoen \cite{GS92},  there has been growing interest in developing a theory of harmonic maps between singular metric spaces (see, for example, \cite{KS93, Che95, Lin97, Jost94, Jost95, Jost97, DM08, DM10, EF01, Stu05,  Guo21, Hon-S23} and their references; the list is far away from complete).

To introduce our main result, let us first recall some notations.
Given a metric measure space $(X,d,\meas)$, the $RCD(K,N)$ is  a synthetic notion of lower Ricci bounds for  $(X,d,\meas)$, which  has been  developed in    \cite{Stu06a,Stu06b,LV09, AGS14a,AGS14b,Gig15,AMS16,EKS15}. The parameters $K\in\mathbb R$ and $N\in [1,+\infty]$ play the role of “Ricci curvature $\geqslant   K$ and dimension $\leqslant N$”.   We refer to \cite{Amb18} for the survey of geometric analysis on $RCD(K,N)$ spaces.
 Let $\Omega$ be a bounded open domain of  an $RCD(K,N)$ space $(X,d,\meas)$ for some $K\in\mathbb R$ and $N\in [1,+\infty)$ such that $\meas(X\backslash \Omega)>0$, and let $(Y,d_Y)$ be a complete metric space.
 A Borel measurable map $u:\ \Omega\to Y$ is said to be in the space $L^2(\Omega,Y)$ if it has a separable range and, for some (hence, for all) $P\in Y$,
$$\int_\Omega d^2_Y\big(u(x),P\big){\rm d}\meas(x)<\infty.$$
The Sobolev energy of  a map   $u\in L^2(\Omega,Y) $ is defined \cite{KS93,KS03,GT21b} by
\begin{equation}
	\label{BR-equ-1.1}E_2^{\Omega}(u):=\sup_{\phi\in C_0(\Omega),\ 0\ls \phi\ls 1}\left(\limsup_{r\to 0^+} \int_\Omega\phi\cdot {\rm ks}^2_{2,r}[u,\Omega]{\rm d}\meas\right),
\end{equation}	
where
 $$  {\rm ks}_{2,r}[u,\Omega](x):=
\begin{cases}
\left(\frac{1}{\meas(B_r(x))}\int_{B_r(x)} \frac{d_Y^2(u(x), u(y))}{r^2}{\rm d}\meas(y)  \right)^{1/2}, &\ {\rm if}\ \ B_r(x)\subset\Omega,\\
0,& \ \  {\rm otherwise}.
\end{cases}
$$
A map $u\in L^2(\Omega,Y) $ is called in $W^{1,2}(\Omega,Y)$ if $E_2^\Omega(u)<+\infty.$
 \begin{defn}\label{BR-def-1.2}
A map $u\in W^{1,2}(\Omega, Y)$ is called a {\emph{harmonic map}} if it solves the Dirichlet problem for some prescribed  data $u_0 \in W^{1,2}(\Omega, Y)$. Precisely,
given a map $u_0\in W^{1,2}(\Omega,Y)$,  we have
  $$E_2^{\Omega}(u)= \min \left\{ E_2^{\Omega}(v)\big |\ v\in W^{1,2}(\Omega,Y) \ {\rm such \ that }\ d_Y(v(x),u_0(x)) \in W^{1,2}_0(\Omega)\right\}.$$
\end{defn}

If $(Y, d_Y)$ is a $CAT(0)$ space (i.e., a metric space with global non-positively curvature in the sense of Alexandrov), then for any  $u_0\in W^{1,2}(\Omega,Y)$ the harmonic map $u$ with the boundary data $u_0$ exists uniquely (see \cite{GT21b}).

The interior regularity of such harmonic maps has been considered by many researchers  (see, for example, \cite{Che95, Lin97, Jost97, DM08, DM10, EF01, Fug03, Guo21, ZZ18, ZZZ19, MS22+, Gig23}). In 1997, Jost \cite{Jost97} and Lin \cite{Lin97} proved that every harmonic map from a finite-dimensional Alexandrov space with curvature bounded from below to a $CAT(0)$ space is locally H\"older continuous. Lin \cite{Lin97} and Jost \cite{Jost98} proposed an open problem:
Can the H\"older continuity be improved to Lipschitz continuity? The authors gave a complete resolution to it in \cite{ZZ18}.
 Recently, Mondino-Semola \cite{MS22+} and   Gigli \cite{Gig23} have independently proved a very general interior regularity result: any harmonic map from an $RCD(K, N)$ space to a $CAT(0)$ space is locally Lipschitz continuous, where $K\in\mathbb R$ and $N\in[1,\infty)$.  Assimos-Gui-Jost \cite{Ass-GJ24} have proved the local Lipschitz continuity for sub-elliptic harmonic maps from $n$-dimensional Heisenberg groups into $CAT(0)$-spaces.

In this present paper, we will focus on {\emph{the qualitative boundary behavior of harmonic maps between singular metric spaces with   Lipschitz data (of course, the boundary is non-smooth)}}.

 Serbinowski \cite{Ser94} considered first the boundary regularity for harmonic maps from a smooth domain to a $CAT(0)$ metric space. He \cite{Ser94} proved that such a harmonic map is globally H\"older continuous with any H\"older index $\alpha\in(0,1)$, assuming that the boundary is of $C^1$ and that the data is Lipschitz continuous.

 To deal with the case where the domain is non-smooth, we recall the following two conditions for the boundary.
 \begin{defn} \label{BR-def-1.5}
Let $\Omega\subset X$  be an open domain with $X\setminus \Omega\not=\emptyset.$

(1)  $\Omega$ is said to satisfy a  {\emph{uniformly exterior ball condition with radius $R_{\rm ext}\in(0,1)$}} if for each $x_0\in \partial\Omega$ there exists a point $y_0\in X\setminus\Omega$ such that
\begin{equation}\label{BR-equ-1.2}
d(y_0,x_0)=R_{\rm ext}\quad {\rm and}\quad \overline{B_{R_{\rm ext}}(y_0)}\cap \Omega=\emptyset.
\end{equation}

(2)   $\partial\Omega$ is said to be  {\emph{perimetrically regular}}  if it is a set of finite perimeter and  there exists a constant  $C_0>0$ such that
\begin{equation}\label{BR-equ-1-3}
Per_{\Omega}(B_r(x_0))\ls C_0\frac{\meas(B_r(x_0))}{r},\qquad \forall x_0\in\partial\Omega,\ \ \forall r\in (0,{\rm diam}(\Omega)).
\end{equation}
Please see \cite{Mir03, ABS19} (or Section 2.3) for the definition of sets of finite perimeter and the perimeter measure $Per_\Omega$.
\end{defn}

\begin{remark}\label{BR-remark-1.6}
 (1) Every bounded domain $\Omega\subset \mathbb R^N$ with a Lipschitz boundary is perimetrically regular. Furthermore,  if the $(N-1)$-dimensional Hausdorff measure $\mathscr H^{N-1}$ is Ahlfors-David regular on $\partial \Omega$  then $\Omega$ is also perimetrically regular (for the definition of Ahlfors-David regularity, see, for example, \cite{MT24}).

(2) There are a lot of bounded domains $\Omega\subset \mathbb R^N$ such that they are perimetrically regular and satisfy a uniformly exterior ball condition, but they do not have a Lipschitz boundary.  An example is given by
$$\Omega:=\left\{(x,y)\in\mathbb R^2\big|\ 0< x<1,\ \ \sqrt{1-x^2}<y<1\right\}.$$

\end{remark}

  In the previous work of the authors \cite{ZZ24}, we have extended Serbinowski's result to the case where both the domain space and the target space are non-smooth.

  \begin{theorem}[Zhang-Zhu \cite{ZZ24}]\label{BR-thm-1.7}
 Let $(X,d,\meas)$ be an $RCD(K,N)$ space with $K\in\mathbb R$ and $N\in(1,+\infty)$, and let $\Omega\subset X$  be a bounded domain such that $\meas(X\setminus \Omega)>0$. Let $(Y,d_Y)$ be a $CAT(0)$ space. Suppose that $u\in W^{1,2}(\Omega,Y)$  is a harmonic map  with data $u_0$. Suppose that    $ \Omega$ satisfies a uniformly exterior ball condition with radius $R_{\rm ext}\in(0,1)$ and the data $u_0\in Lip(\overline{\Omega},Y).$
 Then $u\in C^{1-\epsilon}(\overline{\Omega},Y)$ for every $\epsilon>0$.
  \end{theorem}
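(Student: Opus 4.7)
Since $Y$ is CAT(0), the central analytic fact is the convex-composition principle: for every fixed $Q\in Y$, the function $\rho_Q(x):=d_Y(u(x),Q)$ is weakly subharmonic on $\Omega$ in the RCD calculus. This is the CAT(0)-version of the Korevaar--Schoen/Jost theory applied to the harmonic map $u$, and reduces to convexity of $d_Y(\cdot,Q)$ along geodesics in $Y$. Fix $x_0\in\partial\Omega$ and set $Q:=u_0(x_0)$. Together with the weak maximum principle, this subharmonicity reduces the boundary $C^{1-\epsilon}$ problem at $x_0$ to constructing a superharmonic barrier on $\Omega$ that vanishes at $x_0$ and dominates $L\,d(\cdot,x_0)$ on $\partial\Omega$.

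I would construct such a barrier using the exterior ball. Pick $y_0\in X\setminus\Omega$ with $d(y_0,x_0)=R_{\rm ext}$ and $\overline{B_{R_{\rm ext}}(y_0)}\cap\Omega=\emptyset$, and set $r(x):=d(x,y_0)$, so that $r\geq R_{\rm ext}$ on $\overline{\Omega}$ with equality exactly at $x_0$. The Laplacian comparison on RCD$(K,N)$ spaces bounds $\Delta r$ from above in the distributional sense, from which a direct computation shows that for every $\gamma>0$ large enough (depending on $K,N,R_{\rm ext},\mathrm{diam}(\Omega)$) and every $\alpha\in(0,1)$, the function
\[
\phi_\alpha(x):=\bigl(R_{\rm ext}^{-\gamma}-r(x)^{-\gamma}\bigr)^\alpha
\]
is superharmonic on $\Omega$, vanishes at $x_0$, and scales like $d(x,x_0)^\alpha$ along the radial direction from $y_0$. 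A single comparison against a multiple $C_0\phi_\alpha$ with $\alpha<1/2$ already yields a quantitative boundary modulus, since the worst-case quadratic contact between $\partial\Omega$ and $\partial B_{R_{\rm ext}}(y_0)$ at $x_0$ is compatible precisely with $\alpha=1/2$.

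To upgrade the exponent to $1-\epsilon$ for every $\epsilon>0$, I would iterate the barrier estimate on the dyadic sequence of one-sided boundary balls $B_{2^{-k}R_{\rm ext}}(x_0)\cap\Omega$. On each annular scale, the subharmonicity of $\rho_Q$, combined with a rescaled version of $\phi_\alpha$ and the Lipschitz bound for $u_0$ on the boundary piece, yields a geometric decay of $\sup_{B_{2^{-k}R_{\rm ext}}(x_0)\cap\Omega}\rho_Q$; as $\gamma\to\infty$ this geometric ratio approaches the optimal one, giving $d_Y(u(x),u_0(x_0))\leq C_\epsilon L\,d(x,x_0)^{1-\epsilon}$ uniformly in $x_0\in\partial\Omega$. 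Patching this uniform boundary H\"older modulus with the interior Lipschitz regularity of Mondino--Semola and Gigli (applied on $B_{d(x,\partial\Omega)/2}(x)$ for $x\in\Omega$ and then chained along geodesics between two points of $\overline{\Omega}$) promotes the estimate to a global modulus, giving $u\in C^{1-\epsilon}(\overline{\Omega},Y)$ for every $\epsilon>0$.

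\textbf{Main obstacle.} The hardest step is the iteration that upgrades the single-barrier estimate to H\"older exponent $1-\epsilon$. A single barrier delivers only a fixed, non-optimal exponent (essentially $1/2$ in the worst-case geometry where $\partial\Omega$ leaves $\partial B_{R_{\rm ext}}(y_0)$ quadratically). Pushing the exponent arbitrarily close to $1$ demands controlling the oscillation of $\rho_Q$ on shrinking one-sided balls uniformly in $x_0\in\partial\Omega$, and carefully balancing the barrier parameter $\gamma$ against the dyadic scaling so that both the RCD Laplacian comparison inequality and the telescoping of decay constants remain compatible through every iteration step. Uniformity in $x_0$ is exactly what the uniformly exterior ball condition supplies, but making the estimates genuinely uniform requires a delicate bookkeeping of the constants in the RCD structure.
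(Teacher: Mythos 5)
The strategy you propose (subharmonicity of $\rho_Q:=d_Y(u(\cdot),Q)$, a superharmonic barrier built from the Laplacian comparison on the exterior ball, iteration on dyadic one-sided balls, and patching with the interior Lipschitz estimate) is a reasonable framework, and its first and last steps are fine. But there are two concrete problems in the middle.

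First, the key quantitative claim in your iteration is wrong. You assert that "as $\gamma\to\infty$ this geometric ratio approaches the optimal one." Write $\psi_\gamma(z)=R_{\rm ext}^{-\gamma}-d(z,y_0)^{-\gamma}$ and set $\mu=s/R_{\rm ext}$ where $s=d(z,y_0)-R_{\rm ext}$, so that $\psi_\gamma\propto 1-(1+\mu)^{-\gamma}$. The ratio of barrier values between the scales $r_{k+1}=r_k/2$ and $r_k$ (the decay factor your iteration needs to push below $1/2+\epsilon$) is $\bigl(1-(1+r_{k+1}/R_{\rm ext})^{-\gamma}\bigr)/\bigl(1-(1+r_k/R_{\rm ext})^{-\gamma}\bigr)$. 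As $\gamma\to\infty$ with $r_k$ fixed, both expressions $(1+\mu)^{-\gamma}$ tend to $0$ and the ratio tends to $1$, not to $1/2$: a steeper barrier is worse, not better. The mechanism that actually helps is that for a \emph{fixed} admissible $\gamma$ (the smallest one compatible with the RCD Laplacian comparison, as in Lemma~\ref{BR-lem-2.6}), the ratio tends to $1/2$ as the scale $r_k\to 0$, because the barrier is asymptotically linear in $s$ for $\gamma s/R_{\rm ext}\ll1$. Taking $\gamma$ large destroys precisely the regime in which the linearization holds, and the power $\alpha$ in $\phi_\alpha=\psi_\gamma^\alpha$ does not rescue this: the domination of the Lipschitz data on $\partial\Omega$ (quadratic tangency with $\partial B_{R_{\rm ext}}(y_0)$) forces $\alpha\le 1/2$, and with $\alpha\le 1/2$ the per-step decay ratio is at best $(1/2)^{1/2}$, which gives no improvement over the H\"older exponent $1/2$ obtained from a single comparison.

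Second, the iteration as described does not close. On the sub-domain $\Omega\cap B_{r_k}(x^*)$ one needs a superharmonic comparison function that dominates $\rho_Q$ both on $\partial\Omega\cap B_{r_k}(x^*)$ (where the Lipschitz data bound applies) and on the interior sphere $\Omega\cap\partial B_{r_k}(x^*)$ (where one only knows $\rho_Q\le N_k$). But $\psi_\gamma$ vanishes on $\partial B_{R_{\rm ext}}(y_0)$, and $\Omega\cap\partial B_{r_k}(x^*)$ comes arbitrarily close to $\partial B_{R_{\rm ext}}(y_0)$; hence $\inf_{\Omega\cap\partial B_{r_k}(x^*)}\psi_\gamma$ is not bounded away from zero, and no fixed multiple of $\psi_\gamma$ can dominate the constant $N_k$ there. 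Making the dyadic decay rigorous requires replacing the point-by-point argument by a quantity that is uniform over all boundary points (e.g.\ $\Theta(\delta):=\sup\{\rho_{u_0(w^*)}(z):w^*\in\partial\Omega,\ \delta(z)\le\delta,\ d(z,w^*)\le 2\delta(z)\}$), together with the elementary CAT(0) comparison $\rho_{u_0(x^*)}(w)\le\rho_{u_0(w^*)}(w)+Ld(x^*,w^*)$ to transfer information from the correct boundary basepoint to the one used on the interior sphere. Without this bookkeeping, the recurrence $N_{k+1}\le\theta_k N_k+CLr_k$ is not established. For comparison, note that the present paper does not reprove Theorem~\ref{BR-thm-1.7} at all, and the sharp argument it gives for Theorem~\ref{BR-thm-1.8-main} bypasses this iteration entirely by producing a quantitative gradient estimate on the Dirichlet Green function and then applying the Gauss--Green formula of Proposition~\ref{BR-prop-1.12}.
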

  \noindent Here and in the sequel,  a map $v\in C^\alpha(\overline\Omega,Y)$, $0<\alpha<1$,  means
 $$\sup_{x,y\in\overline\Omega,\ x\not=y}\frac{d_Y(v(x),v(y))}{d^\alpha(x,y)} <+\infty.$$
  A map $v\in Lip(\overline\Omega,Y)$ means $ d_Y(v(x),v(y))\ls L\cdot d(x,y)$ for some $L>0$ and for all $x,y\in\overline\Omega$; such a constant $L$ is called a Lipschitz constant of $v$.

Our main result in this paper is the following optimal boundary gradient estimates of $u$ on  $RCD(K, N)$ spaces.

\begin{theorem}\label{BR-thm-1.8-main}
 Let $(X,d,\meas)$ be an $RCD(K,N)$ space with $N>1$,  and let $\Omega\subset X$  be a bounded domain such that $\meas(X\setminus \Omega)>0$. Let $(Y,d_Y)$ be a $CAT(0)$ space. Suppose that $\partial\Omega$ is perimetrically regular, and that $\Omega$ satisfies a uniformly exterior ball condition with radius $R_{\rm ext}\in(0,1)$.

 Given any $u_0\in Lip ({\overline\Omega},Y)$ with Lipschitz constant $L>0$,
 let $u\in W^{1,2}(\Omega,Y)$ be the harmonic map with the boundary data  $u_0$. Then it holds
\begin{equation}
	\label{BR-equ-1.4}
	{\rm Lip}u(x):=\limsup_{y\to x,\ y\not=x}\frac{d_Y(u(x),u(y))}{d(x,y)}\ls c_{1} L\cdot \ln\left(\frac{2e\cdot {\rm diam}(\Omega)}{d(x,\partial\Omega)}\right),\quad \forall x\in\Omega,
\end{equation}
where  the constant $c_1$ depends only on $K$, $N$, $ {\rm diam}(\Omega), \meas(\Omega), Per_\Omega(X),  R_{\rm ext}$ and $C_0$ in \eqref{BR-equ-1-3}.
\end{theorem}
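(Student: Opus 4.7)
\medskip

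\noindent\textbf{Proof plan.} The strategy is to reduce the $CAT(0)$-valued problem to a scalar linear one via a subharmonic comparison, then apply the Gauss--Green representation developed earlier in the paper, and finally use sharp boundary estimates for the Dirichlet heat kernel $p_\Omega$ to extract the logarithmic factor.

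\smallskip
\emph{Step 1 (Scalar reduction).} Fix $x\in\Omega$, set $r:=d(x,\partial\Omega)$, and choose $x_0\in\partial\Omega$ with $d(x,x_0)=r$. Since $Y$ is $CAT(0)$, the function $\psi(z):=d_Y\!\bigl(u(z),u_0(x_0)\bigr)$ is subharmonic on $\Omega$ with boundary trace bounded by $L\cdot d(\cdot,x_0)$. Let $h\in W^{1,2}(\Omega)$ be the real-valued harmonic function with Dirichlet data $f(y):=L\cdot d(y,x_0)$. The maximum principle gives $\psi\le h$. Combining this with the interior Cheng--Yau-type gradient estimate for harmonic maps (Mondino--Semola \cite{MS22+}, Gigli \cite{Gig23}), ${\rm Lip}\,u(x)\le(C/r)\sup_{B_{r/2}(x)}d_Y(u,u_0(x_0))$, and the elliptic Harnack inequality $\sup_{B_{r/2}(x)}h\le C h(x)$, the theorem reduces to proving
\begin{equation*}
h(x)\le C\,L\,r\cdot\log\!\Bigl(\tfrac{\operatorname{diam}(\Omega)}{r}\Bigr). \tag{$\star$}
\end{equation*}

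\smallskip
\emph{Step 2 (Poisson representation via Gauss--Green).} By the Gauss--Green formula established in the earlier sections of the paper, $h$ admits the representation
\begin{equation*}
h(x)=\int_{\partial\Omega}\mathcal{P}_\Omega(x,y)\,f(y)\,dPer_\Omega(y),
\end{equation*}
where $\mathcal{P}_\Omega(x,y)=-\partial_{\nu_y}G_\Omega(x,y)$ is the Poisson kernel and $G_\Omega(x,y)=\int_0^\infty p_\Omega(t,x,y)\,dt$ is the Dirichlet Green function. Using $f(x_0)=0$ and $f(y)\le L\cdot d(y,x_0)$,
\begin{equation*}
h(x)\le L\int_{\partial\Omega}\mathcal{P}_\Omega(x,y)\,d(y,x_0)\,dPer_\Omega(y).
\end{equation*}

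\smallskip
\emph{Step 3 (Boundary decay of $p_\Omega$ and $\mathcal{P}_\Omega$).} The central technical input is a linear-in-$d(\cdot,\partial\Omega)$ boundary decay for the Dirichlet heat kernel:
\begin{equation*}
p_\Omega(t,x,y)\le C\cdot\tfrac{d(x,\partial\Omega)}{\sqrt t\,\wedge\,1}\cdot\mathcal{G}_t(x,y),\qquad y\in\partial\Omega,
\end{equation*}
where $\mathcal{G}_t$ is a Gaussian upper bound on $(X,d,\meas)$. This is built by using the exterior ball $B(y_0,R_{\rm ext})$ as a barrier: $p_\Omega$ is dominated by the Dirichlet heat kernel of $X\setminus\overline{B(y_0,R_{\rm ext})}$, for which one has the desired linear vanishing at the boundary via a sub/supersolution argument combined with the $RCD(K,N)$ Gaussian bounds. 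Integration in $t$ and differentiation in $\nu_y$ then produce
\begin{equation*}
\mathcal{P}_\Omega(x,y)\le C\cdot\tfrac{d(x,\partial\Omega)}{\meas\!\bigl(B_{d(x,y)}(x)\bigr)},\qquad y\in\partial\Omega.
\end{equation*}

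\smallskip
\emph{Step 4 (Dyadic summation).} Decompose $\partial\Omega$ into the annuli $A_k:=\{y\in\partial\Omega:2^{k-1}r<d(y,x_0)\le 2^k r\}$ for $k=0,1,\dots,\lceil\log_2(\operatorname{diam}(\Omega)/r)\rceil$. The perimetric regularity \eqref{BR-equ-1-3} yields $Per_\Omega(A_k)\le C\,\meas(B_{2^k r}(x_0))/(2^k r)$, and for $k\ge 2$ we have $d(x,y)\asymp 2^k r$, so
\begin{equation*}
\int_{A_k}\mathcal{P}_\Omega(x,y)\,d(y,x_0)\,dPer_\Omega(y)\le C\cdot\tfrac{r}{\meas(B_{2^k r}(x))}\cdot 2^k r\cdot\tfrac{\meas(B_{2^k r}(x_0))}{2^k r}\le C\,r
\end{equation*}
by the Bishop--Gromov doubling property. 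The innermost annuli $k=0,1$ are controlled by a Caccioppoli estimate near the boundary. Summing over the $\lesssim\log(\operatorname{diam}(\Omega)/r)$ scales yields $(\star)$.

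\smallskip
\emph{The main obstacle} will be Step 3. The standard Gaussian upper bound for $p_\Omega$ on $RCD$-spaces does \emph{not} capture the boundary vanishing $\sim d(x,\partial\Omega)$ needed here; obtaining this optimal linear barrier under only an exterior ball condition (no regularity of $\partial\Omega$ beyond perimetric regularity, no smoothness of the ambient space) is the core new analytic work. The construction must be compatible with the Gauss--Green integration by parts on a merely perimetrically regular boundary, which is precisely where the new Gauss--Green formula of this paper enters.
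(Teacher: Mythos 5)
Your overall architecture matches the paper's: scalar reduction via the subharmonicity of $z\mapsto d_Y(u(z),u(x_0))$, a Gauss--Green / Green-function argument, boundary decay of the Dirichlet heat kernel from an exterior-ball barrier, and a dyadic decomposition of $\partial\Omega$ via perimetric regularity. However, Step 2 has a genuine gap. The representation $h(x)=\int_{\partial\Omega}\mathcal{P}_\Omega(x,y)f(y)\,{\rm d}Per_\Omega(y)$ with $\mathcal{P}_\Omega(x,y)=-\partial_{\nu_y}G_\Omega(x,y)$ presupposes a pointwise normal derivative of the Dirichlet Green function on the (merely perimetrically regular) boundary. This cannot be taken for granted: $G^\Omega(x,\cdot)$ is not even in $W^{1,2}(\Omega)$ in general, so $\nabla G^\Omega(x,\cdot)$ is not a priori a divergence-measure field, and the available weak Gauss--Green formulas (e.g.\ Theorem \ref{BR-thm-4.1}) yield at best an $L^\infty$ normal trace, which loses the pointwise decay $\sim\delta(x)/\meas(B_{d(x,y)}(x))$ that Step 4 relies on. This is exactly the ``difficulty (ii)'' highlighted in the paper's introduction, and it is the central obstacle --- more so than the heat-kernel barrier of your Step 3, which follows the expected lines (parabolic comparison with a Laplacian-comparison barrier).

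The paper bypasses the pointwise Poisson kernel entirely. It proves an inequality form of the Gauss--Green formula (Proposition \ref{BR-prop-1.12}): if $|\nabla g|\leq h$ $\meas$-a.e.\ for an upper semicontinuous $h$ on $\overline\Omega$, then for nonnegative $f\in W^{1,2}(\Omega)\cap C(\overline\Omega)$ one has $-\int_\Omega fa\,{\rm d}\meas\leq\int_\Omega\ip{\nabla f}{\nabla g}{\rm d}\meas+\tfrac{c}{\gamma}\int_{\partial\Omega}fh\,{\rm d}Per_\Omega$, obtained via interior discrete-convolution approximants $\phi_k\to\chi_\Omega$ (Lemma \ref{BR-lem-4.11}). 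This is applied with $g:=G_\varepsilon(x,\cdot)$, the time-cutoff Green function, which lies in $D(\Delta_\Omega)\cap Lip_{\rm loc}(\Omega)$ and satisfies $\Delta_\Omega G_\varepsilon(x,\cdot)=-p^\Omega_\varepsilon(x,\cdot)$; the dominator $h$ is built by combining the local Li--Yau estimate, which gives $|\nabla G_\varepsilon(x,\cdot)|(y)\lesssim G^\Omega(x,y)/\delta(y)$ near $\partial\Omega$ (Lemma \ref{BR-lem-3.10}), with the two-sided decay of $G^\Omega$ from Theorem \ref{BR-thm-3.9}, so that the $\delta(y)$ factors cancel. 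So the bound you propose for $\mathcal{P}_\Omega$ in Step 3 has the right order of magnitude, but it cannot be obtained by differentiating $G^\Omega$ in $\nu_y$; rather, the $\nabla G_\varepsilon$ dominator must be fed into the modified Gauss--Green inequality and the $\varepsilon\to0$ limit taken at the end. One minor further remark on Step 1: the paper does not pass through a harmonic majorant plus a Harnack inequality; it applies the resulting subharmonic bound (Theorem \ref{BR-thm-5.1-hf}) directly to $d_Y(u(\cdot),u(x^*))$ on a ball near $x$, combined with the interior Lipschitz estimate of Mondino--Semola and Gigli.
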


From the formula of the Poisson kernel on the unit ball in $\mathbb R^N$ and the calculation in \cite{HL32},  it is clear that the estimate (\ref{BR-equ-1.4}) is optimal. Precisely, we have the following example.
  \begin{exam}[Hardy-Littlewood]  \label{BR-exam-1.9}
Let  $B^2_{1}(0)=\{(r,\theta)|\ 0\ls r<1, \theta\in(-\pi,\pi]  \}$ be the unit ball in $ \mathbb R^2$, and let $f_0$ be defined on $S_1=\partial B^2_1(0)$ by
$$f_0(\theta)=|\sin\theta|  \  \ {\rm if}\ \  |\theta|\ls \pi/2 \qquad {\rm and}\qquad   f_0(\theta)=1\ \ {\rm otherwise}.$$
Then the harmonic function on $B^2_{1}(0)$ with the data  $f_0$ is uniquely determined by the Poisson integral formula,
$$f(r,\theta_0)= \frac{1-r^2}{2\pi}\int_{ S^1} \frac{ f_0(\theta)}{|(r,\theta_0),(1,\theta)|_{\mathbb R^2}^2 }{\rm d}\theta = \frac{1-r^2}{2\pi}\int_{S^1} \frac{f_0(\theta)}{ r^2+1-2r\cos(\theta-\theta_0)   }{\rm d}\theta.$$
 By directly calculating, we have
 \begin{equation*}
 \begin{split}
2\pi\cdot \frac{\partial f(r, \theta_0:=0)}{\partial r}&= \int_{|\theta|\ls\pi/2}\frac{-4r +(2r^2+2)\cos\theta }{(r^2+1-2r\cos \theta )^2} |\sin\theta|{\rm d}\theta\\
  &\quad + \int_{|\theta|\gs\pi/2}\frac{-4r +(2r^2+2)\cos\theta }{(r^2+1-2r\cos \theta )^2}  {\rm d}\theta \\
 &:= 2\big( I_1(r)+I_2(r)\big)
    \end{split}
 \end{equation*}
 for all $r<1$,
where
\begin{equation*}
\begin{split}
I_1(r)&= \int_{|\theta|\ls\pi/2}\frac{-2r +(r^2+ 1)\cos\theta}{(r^2+1-2r\cos \theta )^2} |\sin\theta|{\rm d}\theta\\
 &= -2\int_0^1\frac{-2r+(r^2+1)  s}{(r^2+1-2rs)^2}{\rm d}s\qquad \qquad ({\rm by}\ \ s=\cos\theta)\\
 &=\frac{(r+1)^2}{r(r^2+1)}-\frac{1+r^2}{r^2}\ln\left(1-r\right)+\frac{r^2+1}{2r^2}\ln(1+r^2)\\
 &\sim 2\ln  \left( \frac{1}{1-r} \right),\ \qquad  ({\rm as}\ \ r\to1^-)
\end{split}
\end{equation*}
and, noticing that $r^2+1-2r\cos\theta\gs1$ whenever  $|\theta|\gs \pi/2$,
 $$|I_2(r)|\ls  \int_{|\theta|\gs\pi/2}\big| -2r +( r^2+1)\cos\theta  \big| {\rm d}\theta\ls 4\pi.$$
Therefore, we conclude
\begin{equation*}
|\nabla f|(r,0) \gs \left|\frac{\partial f(r,0)}{\partial r}\right| \gs  \ln \left(  \frac{1}{1-r} \right)=\ln\left(\frac{1}{|(r,0),
\partial B_2(0)|_{\mathbb R^2}}\right),
 \end{equation*}
as $r\to 1^-$.\end{exam}

\subsection{Outline of the proofs}
 We now give an imprecise outline of the proof of Theorem \ref{BR-thm-1.8-main}. To simplify, we begin with the toy model case when $u$ is a harmonic function (i.e., $N=\mathbb R$) on an Euclidean domain with Lipschitz boundary data $u_0$.

 Let $\Omega\subset \mathbb R^n$ be a set of locally finite perimeter that is perimetrically regular and satisfies a uniform exterior ball condition. Let $F:\mathbb R^n\to \mathbb R^n$ be a Lipschitz (or BV) vector field. De Giorgi's structure theorem implies that the classical Gauss-Green formula is also true for $(\Omega, F)$ (see De Giorgi \cite{DeG54}, Federer \cite{Fed58}, and Ambrosio-Fusco-Pallara \cite{AFP00}), that is, it holds
 \begin{equation}\label{equation-1-7}
\int_\Omega \phi  div(F) {\rm d}x+\int_\Omega \ip{F}{\nabla \phi}{\rm d}x=-\int_{\mathscr F \Omega}\phi F\cdot \nu_\Omega{\rm d}\mathscr H^{n-1},\quad \forall \phi\in Lip_0(\Omega),	
\end{equation}
where ${\nu_\Omega}$ is the measure-theoretic interior unit normal vector to $\partial \Omega$ and $\mathscr F\Omega$ is the reduced boundary.   Denote by  $G^\Omega(x,y)$  the Green function of the Dirichlet Laplacian on $\Omega$. Fix any $x\in\Omega$.
If we priorly assume that   $\nabla G^\Omega(x,\cdot)\in Lip(\Omega,\mathbb R^n)$,  then $\nabla G^\Omega(x,\cdot)$ can be naturally extended to a Lipschitz vector field on $\mathbb R^n$. By applying \eqref{equation-1-7} to $F:=\nabla G^\Omega$ and $\phi:=u$, we conclude
\begin{equation}\label{BR-equ-1.5}
|u(x)| \ls \int_{\mathscr F  \Omega}|u_0(y)| \cdot|\nabla G^\Omega(x,y)| {\rm d}\mathscr H^{n-1}(y).
\end{equation}
Since $\Omega$ satisfies a uniform exterior ball condition, Gr\"uter-Widman \cite{GW82} proved
\begin{equation}\label{equation-1-9}	|\nabla G^\Omega(x,\cdot)|(y)\ls C_n\frac{{d}(x,\partial \Omega)}{|x-y|^n},\quad \forall y\in\Omega.
\end{equation}
Let $x_0\in\partial\Omega$ be a point such that $|x_0-x|={d}(x,\partial\Omega)$.
 The minimizing of $x_0$ yields that for any $y\in \partial\Omega$,
 $$|x-y|\gs \frac{2}{3}|x-x_0|+\frac{1}{3}|x-y|\gs \frac{1}{3}|x-x_0|+\frac{1}{3}|x_0-y|.$$
 Substituting this into (\ref{equation-1-9}), we get
\begin{equation}\label{equation-1-10}
|\nabla G^\Omega(x,\cdot)|(y) \ls C'_n    \frac{{d}(x,\partial \Omega)}{|x-x_0|^n+|x_0-y|^n},\quad \forall y\in\Omega.
\end{equation}
This implies the desired estimate (\ref{equation-1-2}), which we will illustrate as follows.  By replacing $u_0$ by $u_0-u_0(x_0)$, we can assume $u_0(x_0)=0$. The assumption  $u_0\in Lip(\overline\Omega)$ implies
$$|u_0(y)|\ls L|x_0-y|,\quad \forall \ y\in \partial\Omega,$$
 where $L$ is a Lipschitz constant of $u_0$.
Hence, by combining with \eqref{BR-equ-1.5}, \eqref{equation-1-10}, and letting $r_j:=2^{-j}{\rm diam}(\Omega)$, $B_j:=B_{r_j}(x_0)$, we have
\begin{equation}
\begin{split}
	\label{equation-1-11}
\frac{|u(x)|}{{d}(x,\partial \Omega)}& \ls C_n  \sum_{j=0}^\infty \int_{\mathscr F  \Omega\cap  ( B_j\setminus B_{j+1})} \frac{L |x_0-y|}{|x-x_0|^n+|x_0-y|^n} {\rm d}\mathscr H^{n-1}(y)\\
&\ls C'_nL\sum_{j=0}^\infty\frac{r_j}{|x-x_0|^n+r_j^n}\mathscr H^{n-1}\big(\mathscr F    \Omega\cap (B_j\setminus B_{j+1})\big)\\
&\ls C_n'L\sum_{j=0}^\infty\frac{r_j^n}{|x-x_0|^n+r_j^n}\qquad\ \ (by\ \ \eqref{BR-equ-1-3})\\
&\ls C_n''L\cdot\ln\left(\frac{2{\rm diam}(\Omega)}{|x-x_0|}\right),
\end{split}
\end{equation}
which implies the desired gradient estimate \eqref{equation-1-2}.

The primary issue with this outline is that there is no reason we can assume that $\nabla G^\Omega$ is Lipschitz continuous (or BV). Therefore, we must extend the Gauss-Green formula for more general non-smooth vector fields. A vector field $F\in L^\infty(U,\mathbb R^n)$, $U\subset \mathbb R^n$ open,  is called an {\emph{essentially bounded divergence-measure field}}, denoted by $F\in \mathcal{DM}^\infty(U)$, if its distributional divergence is a signed Radon measure with finite total variation on $U$.  Such fields were first introduced by Anzellotti in \cite{Anz83}. An important improvement of \eqref{equation-1-7}  was given by Chen-Frid \cite{CF99}, Chen-Torres-Ziemer \cite{CTZ09}, and Comi-Rayne \cite{CP20}, where they   established  the following weak form of the Gauss-Green formula:
if $F\in \mathcal{DM}^\infty(U)$ and  $\Omega\Subset U$ is a set of finite perimeter, then there exists a function $(F\cdot \nu_\Omega)_{\rm in} \in L^\infty(\mathscr F\Omega,\mathscr H^{n-1})$ on the reduced boundary $\mathscr F\Omega$ such that  for any $\phi\in Lip_0(U)$,	it holds
\begin{equation}\label{equation-1-12}
	\ip{F\cdot\nu}{\phi}_{\partial \Omega}:=\int_{\Omega^{(1)}}\phi{\rm d}  div(F)+\int_\Omega\ip{F}{\nabla \phi}{\rm d}x =-\int_{\mathscr F\Omega}\phi (F\cdot \nu_\Omega)_{\rm in}{\rm d}\mathscr H^{n-1}
	\end{equation}
and	(see \cite[Remark 4.3]{CP20})
\begin{equation}
	\label{equation-1-13}
\|(F\cdot\nu_\Omega)_{\rm in}\|_{L^\infty(\mathscr F\Omega;\mathscr H^{n-1})}\ls \inf_{\varepsilon>0} \|F\|_{L^\infty\left(\{x\in \Omega:\ {d}(x,\partial\Omega)<\varepsilon\}\right)}   \end{equation}
  where $\Omega^{(1)}$ is all points where $\Omega$ has density 1, and $\ip{F\cdot\nu}{\cdot}_{\partial \Omega}$ is called the {\emph{(inner) normal trace}} of $F$ on $\partial\Omega$. By Riesz representation theorem, $\ip{F\cdot\nu}{\cdot}_{\partial \Omega}$ is a signed Radon measure on $\mathscr F\Omega$ and $\ip{F\cdot\nu}{\cdot}_{\partial \Omega}=(F\cdot\nu_\Omega)_{\rm in}\cdot\mathscr H^{n-1}|_{\mathscr F\Omega}$.

When one wants to replace (\ref{equation-1-7}) with (\ref{equation-1-12})--(\ref{equation-1-13}) to estimate $|u(x)|$, the following two main dificulties arise:
   \begin{itemize}
\item[(i)]
 $F$ must be extended to be a vector field $\tilde F\in \mathcal{DM}^\infty(U)$ for some open set $U\Supset\Omega$,
 \item[(ii)] the $L^\infty$-estimate (\ref{equation-1-13}) is not enough to conclude a logarithmic growth like $(\ref{BR-def-1.2})$. In fact, even for the case $\Omega:=B_1(0^n)$, it is well-known \cite[Section 2.5]{GT01} that
$$\max_{y\in\partial B_1(0^n)}|\nabla G^{B_1(0^n)}(x,\cdot)|(y) \approx_n \frac{1}{{d}^{n-1}(x,\partial B_1(0^n))},$$
where $A\approx_n B$ means $c_1\ls A/B\ls c_2$ for two positive numbers $c_1,c_2$ depending only on $n$. By combining with (\ref{equation-1-13}), one has
$$\|(\nabla G^\Omega\cdot\nu_\Omega)_{\rm in}\|_{L^\infty(\mathscr F\Omega;\mathscr H^{n-1})}\ls \frac{c_n}{{  d}^{n-1}(x,\partial B_1(0^n))},$$
 which has lost the {\emph{main term}} $|x_0-y|^n$ in the denominator of the right-hand side of (\ref{equation-1-10}).
\end{itemize}
Nowadays, the difficulty (i) has been resolved by Chen-Li-Torres \cite[Theorem 4.2]{CLT20}, where they proved that if $\Omega$ satisfies
$$\mathscr H^{n-1}(\partial\Omega\setminus\Omega^{(0)})<+\infty,$$
then any $F\in  \mathcal{DM}^\infty(\Omega)$ can be extended to be a vector field in $\mathcal{DM}^\infty(\mathbb R^n),$ where $\Omega^{(0)}$ is all points where $\Omega$ has density 0.

To overcome the difficulty (ii), one needs to establish a version of the Gauss-Green formula with a pointwise upper bound of $|(F,\nu_\Omega)_{\rm in}|$ such that the term  $|x_0-y|^n$ in (\ref{equation-1-10}) can be preserved.
Remark that in \cite[Theorem 7.2]{CTZ09} and \cite[Theorem 3.7]{CP20}, it was proved that if $F\in   C(U)\cap \mathcal{DM}^\infty(U)$ for some open set $U$ and if $\Omega\Subset U$ is a set of finite perimeter, then  $(F\cdot\nu_\Omega)_{\rm in}$ coincides $\mathscr H^{n-1}$-a.e. with the classical inner product of $F$ and $\nu_\Omega$. Hence, in this case,  the following pointwise estimate holds
\begin{equation}
	\label{equation-1-14}
	|(F\cdot\nu_\Omega)_{\rm in}|\ls  |F|,\quad    \mathscr H^{n-1}{\rm-a.e.\ in }\ \mathscr F\Omega.
		\end{equation}
However, the assumption $F\in C(U)$ is still too strict, since there is still no reason we can assume that $\nabla G^\Omega$ is continuous on $\overline\Omega\subset U$.

The main observation in the present paper is that, assuming that $F$ is a gradient field of a Lipschitz function and $\partial\Omega\cap\Omega^{(1)}=\emptyset$, we can prove a version of Gauss-Green formula, which relaxes the assumption $F\in C(U)$ in \cite{CTZ09, CP20} to that there exists a upper semi-continuous $h$ on $\overline\Omega$ such that $|F|\ls h$ almost all points in $\Omega$, and obtains the conclusion that a pointwise estimate (\ref{equation-1-14}), up to a multiple constant, still holds. The precise and quantitative statement will be given in Proposition \ref{BR-prop-1.12} below (in the more general the $RCD$ setting).\\

 We now go back to the $RCD$ setting and give a sketch of the proof of Theorem \ref{BR-thm-1.8-main}. Similar to the above, the proof contains three main steps:
 \begin{itemize}
\item[Step 1:] to obtain a gradient estimate of the Green function of the Dirichlet Laplacian on $\Omega$.
\item[Step 2:] to establish an adequate version of the Gauss-Green formula (see Proposition \ref{BR-prop-1.12}). And then, as above, we can estimate the growth of harmonic functions near the boundary $\partial\Omega$.
\item[Step 3:] to finish the proof, by using the harmonic functions to control the growth of harmonic maps near the boundary.
 \end{itemize}

To get the estimate for the Dirichlet Green functions, we first establish the following boundary behavior of Dirichlet heat kernels on $RCD(K, N)$ spaces, which is of independent interest.

\begin{theorem}\label{BR-thm-1.10}

Let $\Omega$ be bounded and satisfy a uniform exterior ball condition with radius $R_{\rm ext}\in(0,1)$. Then the Dirichlet heat kernel $p^\Omega_t(x,y)$ has the following upper bounds: for any $x,y\in \Omega$ and any $  T\gs R^2_{\rm ext}$ it holds
\begin{equation}
	\label{BR-equ-1.6}
	p^\Omega_t(x,y)\ls \frac{d(x,\partial\Omega)\cdot d(y,\partial\Omega)}{t}\cdot\frac{ c_{T}}{\meas(B_{\sqrt t}(y))} \exp\left(-\frac{d^2(x,y)}{c_T \cdot t}  \right)
\end{equation}
for all $t\in(0,T)$, where the constant $c_T$ depends only on $T, N,K$ and $ R_{\rm ext}$.
\end{theorem}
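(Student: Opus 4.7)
The plan is to decouple the right-hand side of \eqref{BR-equ-1.6} into two separate ingredients and to combine them via the semigroup identity $p^\Omega_t(x,y)=\int_\Omega p^\Omega_{t/2}(x,z)\,p^\Omega_{t/2}(z,y)\,{\rm d}\meas(z)$. The first ingredient is the standard Gaussian upper bound
$$p^\Omega_t(x,y)\ls \frac{c}{\meas(B_{\sqrt t}(y))}\exp\left(-\frac{d^2(x,y)}{c\, t}\right),\qquad t\in(0,T),$$
which holds on any $RCD(K,N)$-space since $p^\Omega_t\ls p^X_t$ by the parabolic maximum principle and the global Gaussian upper bound for $p^X_t$ is well established. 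The second ingredient is the boundary decay of the Dirichlet survival probability
$$\Phi_\Omega(t,x):=\int_\Omega p^\Omega_t(x,y)\,{\rm d}\meas(y)\ls c\,\min\Bigl(1,\,\frac{d(x,\partial\Omega)}{\sqrt t}\Bigr),\qquad t\in(0,T),$$
which encodes the fact that heat started near $\partial\Omega$ escapes quickly. The bulk of the work lies in establishing this survival estimate; once it is available, a bootstrap through the semigroup identity produces the double boundary factor.

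For the survival estimate I would argue by constructing a supersolution barrier exploiting the uniform exterior ball condition. Fix $x\in\Omega$ with $r:=d(x,\partial\Omega)<R_{\rm ext}$ (the regime $r\gs R_{\rm ext}$ being absorbed into the trivial bound $\Phi_\Omega\ls 1$); pick $z_0\in\partial\Omega$ realizing this distance, and take the associated exterior point $y_0\notin\Omega$ from \eqref{BR-equ-1.2}. Then $\rho(z):=d(z,y_0)-R_{\rm ext}$ is non-negative on $\Omega$, vanishes at $z_0$, and equals $r$ at $x$. On $\Omega\subset X\setminus B_{R_{\rm ext}}(y_0)$ the Laplacian comparison theorem for $RCD(K,N)$-spaces yields a one-sided distributional bound $\Delta d(\cdot,y_0)\ls \Lambda$ with $\Lambda=\Lambda(K,N,R_{\rm ext},T)$. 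Composing $\rho$ with a concave increasing profile of error-function type (modified by a factor $e^{-\Lambda s}$ to absorb the first-order term produced by $\Delta\rho$) builds a function $v(t,z)$ that is super-caloric on $\Omega$, satisfies $v(0,\cdot)\equiv 1$, and is non-negative on $\partial\Omega$. The parabolic comparison principle then gives $\Phi_\Omega(t,x)\ls v(t,x)\ls c\,r/\sqrt t$ for $t\in[r^2,T]$, while $\Phi_\Omega\ls 1$ is trivial for $t\ls r^2$.

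With both ingredients in place, the combining step proceeds as a two-step bootstrap. Substituting the Gaussian bound into one factor of the semigroup identity and integrating the other factor against $\meas$ using the survival estimate, after splitting the $z$-integration according to whether $d(z,y)\ls d(x,y)/2$ (so as to pull the factor $e^{-d^2(x,y)/(ct)}$ outside the integral), yields the single-sided improvement
$$p^\Omega_t(x,y)\ls \frac{c\, d(x,\partial\Omega)/\sqrt t}{\meas(B_{\sqrt t}(y))}\exp\left(-\frac{d^2(x,y)}{c\, t}\right).$$
By the symmetry $p^\Omega_t(x,y)=p^\Omega_t(y,x)$, the analogous bound holds with $x$ and $y$ exchanged. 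Feeding both improved one-sided bounds into the semigroup identity once more, and invoking $d^2(x,z)+d^2(z,y)\gs \tfrac12 d^2(x,y)$ to extract the desired Gaussian, the remaining integral $\int_\Omega \meas(B_{\sqrt t}(z))^{-1}\exp(-c'(d^2(x,z)+d^2(z,y))/t)\,{\rm d}\meas(z)$ is $O(1)$ by the standard heat-kernel-mass estimate on locally doubling spaces; this delivers the claimed two-factor bound \eqref{BR-equ-1.6}.

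The principal obstacle is justifying the barrier construction in the $RCD$ setting, where $d(\cdot,y_0)$ is only Lipschitz and $\Delta d(\cdot,y_0)$ is at best a signed Radon measure. I would handle this by interpreting the supersolution inequality distributionally, testing it against $p^\Omega_t(x,\cdot)$ as an admissible test function, and appealing to the calculus for measure-valued Laplacians on $RCD$-spaces. The required one-sided Laplacian estimate $\Delta d(\cdot,y_0)\ls \Lambda$ outside $B_{R_{\rm ext}}(y_0)$ follows from the global $RCD(K,N)$ Laplacian comparison, combined with the lower bound $d(\cdot,y_0)\gs R_{\rm ext}$ on $\Omega$ and the cut-off $t\ls T$ that controls the dependence on the diameter of $\Omega$.
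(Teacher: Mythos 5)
Your barrier construction via the exterior ball and the Laplacian comparison for $d(\cdot,y_0)$ is sound and would indeed give the survival estimate $\int_\Omega p^\Omega_t(x,z)\,{\rm d}\meas(z)\lesssim\min(1,\,d(x,\partial\Omega)/\sqrt t)$; this is the same kind of barrier the paper builds in Lemmas~\ref{BR-lem-2.6} and~\ref{BR-lem-3.3}. But the paper never passes through the survival probability. Instead it proves a local parabolic boundary decay lemma (Theorem~\ref{BR-thm-3.4}): a non-negative sub-caloric function with Dirichlet boundary values, bounded by $a$ on $(B_{2R}(x_0)\cap\Omega)\times(T-R^2,T)$, is $\ls c\,a\,r/R$ at time $T$ on $B_r(x_0)\cap\Omega$. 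This is applied \emph{pointwise} to $v(\cdot,t)=p^\Omega_t(\cdot,y)$ with $R$ comparable to $\sqrt t\,R_{\rm ext}/\sqrt{\widehat T}$, and then a second time in the $y$-variable using the already-improved bound, which produces both boundary factors while preserving the Gaussian. The barrier is the same species of object but is fed into a different, pointwise, device.

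The genuine gap is in your combining step. Splitting the Chapman--Kolmogorov integral at $d(z,y)=d(x,y)/2$ handles only one half of the domain. On $\{d(z,y)>d(x,y)/2\}$ you pull $e^{-d^2(x,y)/(ct)}$ out of $p^\Omega_{t/2}(z,y)$ and integrate $p^\Omega_{t/2}(x,z)$ against $\meas$ to get $d(x,\partial\Omega)/\sqrt t$, which is fine. But on $\{d(z,y)\ls d(x,y)/2\}$ one has $d(x,z)\gs d(x,y)/2$, and the only way to extract the Gaussian tail there is \emph{pointwise} from $p^\Omega_{t/2}(x,z)$; this forces the interior Gaussian bound on that factor (which carries no boundary decay) and the survival estimate on $p^\Omega_{t/2}(z,y)$, producing $d(y,\partial\Omega)/\sqrt t$ instead of $d(x,\partial\Omega)/\sqrt t$. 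What the step actually yields is the mixed bound $p^\Omega_t(x,y)\lesssim\bigl(d(x,\partial\Omega)+d(y,\partial\Omega)\bigr)/\sqrt t\cdot\meas(B_{\sqrt t}(y))^{-1}\exp(-d^2(x,y)/(ct))$, and re-inserting this into the semigroup identity does not upgrade the sum to the product: the region where $d(z,\partial\Omega)\sim\sqrt t$ contributes an $O(1)$ boundary factor and destroys the gain. (Interpolating the survival estimate against the Gaussian via H\"older likewise only produces $(d(x,\partial\Omega)/\sqrt t)^{1-\epsilon}$.) The survival estimate is a purely integrated statement and cannot be promoted to a one-sided \emph{pointwise} boundary decay with Gaussian tail by semigroup bootstrapping alone; closing the gap requires a local parabolic boundary decay lemma applied pointwise to $p^\Omega_\cdot(\cdot,y)$, which is precisely what the paper's Theorem~\ref{BR-thm-3.4} supplies and which your proposal does not establish.
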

Since a Green function is not in $W^{1,2}$ in general, (in particular, its gradient field is not in $\mathcal{DM}^\infty(\Omega)$), we will combine the Li-Yau estimate and Theorem \ref{BR-thm-1.10} to get a uniform estimate for a family of approximating Green functions.

\begin{remark}\label{BR-rem-1.11}
  (1) The boundary behaviors of Dirichlet heat kernels on a $C^{1,1}$ domain of a smooth Riemannian manifold have been widely studied, see, for example,  in \cite{Sera22, Zhang02,  Wang98, Hui92, Dav90} and others.

(2) The proof of \ref{BR-thm-1.10} is based on barrier functions. In the Euclidean setting \cite{GW82, Hui92},  the barrier functions are given by the boundary Schauder estimates. Here, we will construct the barrier functions via the Laplacian comparison theorem in \cite{Gig15}.
\end{remark}

 Recently, the weak version of the Gauss-Green formula (\ref{equation-1-12})--(\ref{equation-1-13}) has been extended to domains on $RCD$ metric measure spaces (see \cite[Theorem 1.6]{BPS23} and \cite{BCM22, BPS23-jems, GM24}). We also mention the related work of  Brena-Gigli for this problem (see \cite[Theorem 4.13]{BG24}).

As we have illustrated above, the  (\ref{equation-1-12})--(\ref{equation-1-13}) is not enough to establish the desired estimate (\ref{BR-equ-1.6}). We need the following version of the Gauss-Green formula:
\begin{proposition}	
\label{BR-prop-1.12}Let $(X,d,\meas)$ be an $RCD(K,N)$ space for some $K\in\mathbb R$ and $N\in(1,\infty)$.
 Let  $\Omega\subset X$ be a bounded and open set of finite perimeter. Assume that $$\liminf_{r\to0}\frac{\meas\big(B_r(x_0)\setminus\Omega \big)}{\meas\big(B_r(x_0)\big)} \gs \gamma,\quad \forall x_0\in \partial\Omega,$$
  for some $\gamma\in (0,1)$. 	
Let $f\in W^{1,2}(\Omega)\cap C(\overline\Omega)$, $f\gs0$.  Suppose that $g\in W^{1,2}(\Omega)$ such that  ${\bf \Delta }g  $ is a finite Radon measure on $\Omega$ and ${\bf \Delta}g\ls a\meas$ for some function $a(x)\in L^1(\Omega)$. Assume $|\nabla g|(x)\ls h(x)$ $\meas$-a.e. $x\in \Omega$ for some upper semicontinuous function $h$ on $\overline\Omega$. Then it holds
 \begin{equation}\label{BR-equ-1.16}
 	-\int_\Omega fa {\rm d}\meas\ls \int_\Omega\ip{\nabla f}{\nabla g}{\rm d}\meas+\frac{c}{\gamma}\int_{\partial\Omega}fh{\rm d}Per_\Omega,
 \end{equation}
 where the constant $c$ depends only on $N, K$ and ${\rm diam}(\Omega)$.	
 \end{proposition}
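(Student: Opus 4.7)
My starting point is the distributional definition of the Laplacian: for every Lipschitz $\phi$ compactly supported in $\Omega$,
\[
\int_\Omega\phi\,d\mathbf{\Delta}g=-\int_\Omega\ip{\nabla\phi}{\nabla g}\,d\meas.
\]
I apply this to $\phi_\epsilon:=f\psi_\epsilon$, where $\psi_\epsilon$ is a $(2/\epsilon)$-Lipschitz cutoff vanishing on $A_{\epsilon/2}:=\{x\in\Omega:d(x,\partial\Omega)<\epsilon/2\}$, equal to $1$ on $\Omega\setminus A_\epsilon$ (with $A_\epsilon:=\{x\in\Omega:d(x,\partial\Omega)<\epsilon\}$), and satisfying $|\nabla\psi_\epsilon|\ls(2/\epsilon)\chi_{A_\epsilon\setminus A_{\epsilon/2}}$. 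Since $\operatorname{supp}\psi_\epsilon$ is a compact subset of $\Omega$, the product $f\psi_\epsilon\in W^{1,2}_0(\Omega)\cap L^\infty$ is a legitimate test function (after a routine Lipschitz approximation of $f$ if needed). Expanding by Leibniz and using $\mathbf{\Delta}g\ls a\meas$ with $f\psi_\epsilon\gs 0$,
\[
-\int_\Omega f\psi_\epsilon a\,d\meas \ls \int_\Omega\psi_\epsilon\ip{\nabla f}{\nabla g}\,d\meas+\int_\Omega f\ip{\nabla\psi_\epsilon}{\nabla g}\,d\meas.
\]
As $\epsilon\to 0^+$, the first two integrals converge by dominated convergence (using $a\in L^1$, $f\in C(\overline\Omega)\subset L^\infty$, and $|\ip{\nabla f}{\nabla g}|\ls h|\nabla f|\in L^1$), while the third (boundary-layer) term is bounded in absolute value by $(2/\epsilon)\int_{A_\epsilon}fh\,d\meas$. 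The whole argument thus reduces to the Minkowski-content bound
\[
\limsup_{\epsilon\to 0^+}\frac{1}{\epsilon}\int_{A_\epsilon} fh\,d\meas \ls \frac{c}{\gamma}\int_{\partial\Omega}fh\,dPer_\Omega.
\]

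I establish this bound by combining the relative isoperimetric inequality with a Vitali covering. Since an $RCD(K,N)$ space is doubling and supports a $(1,1)$-Poincar\'e inequality, one has the relative isoperimetric inequality
\[
\min\bigl\{\meas(B_r(x_0)\cap\Omega),\meas(B_r(x_0)\setminus\Omega)\bigr\}\ls C_P\,r\cdot Per_\Omega(B_{\lambda r}(x_0))
\]
with $C_P,\lambda$ depending only on $K,N,{\rm diam}(\Omega)$. The exterior density assumption $\meas(B_r(x_0)\setminus\Omega)\gs(\gamma/2)\meas(B_r(x_0))$, valid for $r$ smaller than some $r_0(x_0)$, then upgrades this to the one-sided estimate
\[
\meas(B_r(x_0)\cap\Omega)\ls \frac{2C_P}{\gamma}\,r\cdot Per_\Omega(B_{\lambda r}(x_0)),\qquad x_0\in\partial\Omega.
\]
A $5r$-Vitali covering now produces a disjoint family $\{B_\epsilon(x_i)\}_i$ with $x_i\in\partial\Omega$ and $A_\epsilon\subset\bigcup_i B_{6\epsilon}(x_i)$; doubling guarantees bounded overlap of the enlargements $\{B_{6\lambda\epsilon}(x_i)\}$ with a constant depending only on $K,N,\lambda$. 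Rewriting the resulting sum as an integral against $Per_\Omega$ yields
\[
\frac{1}{\epsilon}\int_{A_\epsilon}fh\,d\meas \ls \frac{c}{\gamma}\int_{\partial\Omega}\tilde v_\epsilon\,dPer_\Omega,
\]
where $\tilde v_\epsilon(y):=\sup_{B_{(6+6\lambda)\epsilon}(y)\cap\overline\Omega}fh$. Upper semicontinuity of $fh$ on the compact set $\overline\Omega$ gives the pointwise bound $\limsup_{\epsilon\to 0^+}\tilde v_\epsilon(y)\ls fh(y)$, while $fh$ is uniformly bounded on $\overline\Omega$ (u.s.c.\ on a compact), so a reverse Fatou step against the finite Radon measure $Per_\Omega$ produces the Minkowski-content bound.

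\textbf{Main obstacle.} The crux is the Minkowski-content bound. The factor $1/\gamma$ enters precisely through the exterior density assumption, which is what converts the two-sided relative isoperimetric inequality into a one-sided control of $\meas(B_r(x_0)\cap\Omega)$ by the boundary perimeter. Upper semicontinuity of $h$ (together with continuity of $f$) is exactly the hypothesis needed so that the local suprema of $fh$ around a boundary point do not exceed $fh(x_0)$ in the limit, enabling the reverse-Fatou step; without u.s.c.\ this would fail even in Euclidean space. The remaining ingredients --- Leibniz for $f\psi_\epsilon$, Cauchy--Schwarz domination of $\ip{\nabla f}{\nabla g}$, and dominated convergence of the interior integrals --- are routine given $f\in W^{1,2}(\Omega)\cap C(\overline\Omega)$ and $h\in L^\infty(\overline\Omega)$.
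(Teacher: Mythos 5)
Your proof takes a genuinely different route from the paper's. Both arguments test the distributional definition of $\mathbf\Delta g$ against $f$ times an interior cutoff, and both eventually exploit upper semicontinuity of $fh$ in a reverse-Fatou step against a boundary measure; the divergence is in the choice of cutoff. The paper (Lemma \ref{BR-lem-4.11}, via Proposition \ref{BR-prop-4.10}) approximates $\chi_\Omega$ from inside by \emph{discrete convolutions} in the spirit of Kinnunen--Korte--Shanmugalingam--Tuominen, obtains a \emph{pointwise} bound $\limsup_k f_k(x_0)\leqslant 1-\gamma/(2c)$ at each $x_0\in X\setminus\Omega$, upgrades this to a uniform bound via a Mazur-type lemma from \cite{KLLS19} (convex combinations plus compactness of $\overline{B_{4R}(z_0)}\setminus\Omega$), renormalizes, and ends with $\phi_k\in Lip_0(\Omega)$ whose total-variation measures converge weakly to some $\mu\leqslant(c/\gamma)Per_\Omega$. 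You instead use the distance-to-boundary cutoff $\psi_\epsilon$ and reduce everything to the inner-Minkowski-content type bound $\limsup_{\epsilon\to0^+}\epsilon^{-1}\int_{A_\epsilon}fh\,d\meas\leqslant(c/\gamma)\int_{\partial\Omega}fh\,dPer_\Omega$, which you try to establish from the relative isoperimetric inequality together with a Vitali covering. Your interior bookkeeping (Leibniz, Cauchy--Schwarz domination of $\ip{\nabla f}{\nabla g}$ by $|\nabla f|\,h$, dominated convergence) is fine.

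The gap is in the covering step. Your one-sided estimate $\meas(B_r(x_0)\cap\Omega)\leqslant(2C_P/\gamma)\,r\,Per_\Omega(B_{\lambda r}(x_0))$ holds only for $r<r_0(x_0)$, where $r_0(x_0)$ is the scale below which the exterior density at $x_0$ exceeds $\gamma/2$; the hypothesis is merely a pointwise $\liminf$ as $r\to0$, with no uniformity in $r_0(x_0)$ over $\partial\Omega$. But your $5r$-Vitali family $\{B_\epsilon(x_i)\}_i$ must have fixed radius $\sim\epsilon$ so that the dilates $\{B_{6\epsilon}(x_i)\}$ cover $A_\epsilon$, and you then invoke the isoperimetric upgrade at radius $6\epsilon$ at every selected center $x_i$, i.e. $6\epsilon<r_0(x_i)$ for all $i$ --- which is not guaranteed. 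Passing to variable radii $r_i\leqslant\min\{\epsilon,r_0(x_i)\}$ restores the density estimate but destroys the covering of $A_\epsilon$ once some $r_i\ll\epsilon$. This is exactly the non-uniformity the paper's Mazur-lemma step is designed to bypass: the discrete-convolution bound is pointwise in $x_0$, so the $\liminf$ hypothesis suffices point by point, and uniformization is then a soft functional-analytic step over the compact set $\overline{B_{4R}(z_0)}\setminus\Omega$. Note also that your Minkowski-content bound specialized to $f=h\equiv1$ would read $\mathscr{IM}_+(\Omega)\leqslant(c/\gamma)Per_\Omega(X)$, a statement the paper pointedly does not assume --- the boundary-regular case, where $\mathscr{IM}(\Omega)=Per_\Omega(X)$ is taken as a \emph{hypothesis}, is treated separately (Definition \ref{BR-def-4.3}, Lemma \ref{BR-lem-4.5}, Proposition \ref{BR-prop-4.6}), and Proposition \ref{BR-prop-1.12} is precisely the extension beyond that case. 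Without closing this gap, the proposal is incomplete.
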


 This proposition is the core of the techniques of this paper.  Its proof is based on a family of approximations from the interior of $\Omega$ via the discrete convolutions of $\chi_\Omega$ (c.f. \cite{KKST14, KLLS19}).

 \begin{remark}
 	One benefit of the above Gauss-Green formula (\ref{BR-equ-1.16}) is that we don't need to have an extension of the (informal) vector field $\nabla g$. It is different from the Euclidean case that our argument does not provide a criterion for the extension of a vector field in the sense of $\mathcal{DM}^\infty$. It is still interesting to ask how to extend the criterion of Chen-Li-Torres \cite{CLT20} to the $RCD$ setting.
 \end{remark}

 We will organize this paper as follows.   In Section 2, we will provide some necessary materials on  $RCD(K, N)$ spaces and sets of finite perimeter. In Section 3, we will provide the optimal upper bounds for Dirichlet heat kernels and Green functions, including the proof of Theorem \ref{BR-thm-1.10}. In Section 4, we will prove a version of the Gauss-Green formula on the $RCD$ setting for domains with one-sided approximations. In Section 5, we will give the boundary behaviors of nonnegative subharmonic functions. Section 5 is devoted to the proof of the main Theorem \ref{BR-thm-1.8-main}.  \\

{\bf Acknowledge} The second author was partially supported by National Key R\&D Program of China (No. 2022YFA1005400) and NSFC 12271530. The first author was partially supported by NSFC 12025109.

\section{Preliminaries and notations}

 Let $(X,d,\meas)$ be a metric measure space, i.e., $(X,d)$ is a complete and separable metric space equipped with a non-negative Borel measure which is finite on any ball $B_r(x)\subset X$ and ${\rm supp}(\meas)=X$.
 We will denote by $B_r(x_0)$ the open ball in $X$. Given an open domain $\Omega\subset X$, we denote by $Lip(\Omega)$ (resp.  $C_0(\Omega)$, $Lip_0(\Omega)$, $Lip_{\rm loc}(\Omega)$) the space of Lipschitz continuous (resp. continuous with compact support, Lipschitz continuous with compact support, locally Lipschitz continuous) functions on $\Omega$.
  We denote by $L^p(E):=L^p(E, \meas)$ for any $p\in[1,+\infty]$ and any $\meas$-measurable subset $E\subset X$. For any $f\in L^1(E)$, we set
\begin{equation*}
	f_E:=\fint_Ef{\rm d}\meas:=\frac{1}{\meas(E)}\int_Ef{\rm d}\meas.
\end{equation*}

We will use $c_1, c_2, C, \cdots$ to denote  generic  positive constants depending only on $K$, $N$, $ {\rm diam}(\Omega), \meas(\Omega)$ and $R_{\rm ext}$ (see Definition \ref{BR-def-1.5}); they may differ from line to line.

 \subsection{Basic calculus on $RCD(K,N)$ spaces}

 Let $f\in C(X)$, the \emph{pointwise Lipschitz constant} or \emph{slope}, ${\rm Lip}f: X\to [0,+\infty]$, is given by
 \begin{equation*}
 {\rm Lip}f(x):= \limsup_{y\to x}\frac{|f(y)-f(x)|}{d(x,y)}     \end{equation*}
 if $x$ is not isolated, and ${\rm Lip}f(x)=0$ otherwise.
 The \emph{Cheeger energy} ${\rm Ch}: L^2(X)\to [0,+\infty]$ is
 $${\rm Ch}(f):=\inf\Big\{\liminf_{j\to +\infty}\int_X({\rm Lip}f_j)^2{\rm d}\meas:\ f_j\in Lip_{\rm loc}(X)\cap L^2(X),\ \ f_j\overset{L^2}{\to }f\Big\}.$$
The Sobolev space
$$W^{1,2}(X)=W^{1,2}(X,d,\meas):=\{f\in L^2(X)|\ {\rm Ch}(f)<\infty\}$$
with the norm
\begin{equation*}
    \|f\|^2_{W^{1,2}(X)}:=\|f\|^2_{L^2(X)}+{\rm Ch}(f).
\end{equation*}
For any $f\in W^{1,2}(X)$, it was showed \cite{Che99,AGS14a} that there exists a  {\it minimal weak upper gradient} $|\nabla f|$ such that
 $${\rm Ch}(f)=\frac{1}{2}\int_X|\nabla f|^2(x){\rm d}\meas(x),$$
 and for any $f\in Lip(X) $ that $|\nabla f|(x)={\rm Lip}f(x)$  $\meas-$a.e.   $x\in X$.

A metric measure space $(X,d,\meas)$ is called \emph{infinitesimally Hilbertian} if the Sobolev space $W^{1,2}(X)$ is a Hilbert space.  For an infinitesimally Hilbertian space $(X,d,\meas)$, it was proved \cite{Gig15} that  for any $f,g\in W^{1,2}(X)$ the limit
  \begin{equation}\label{BR-equ-2.1}
  \ip{\nabla f}{\nabla g}:=\lim_{\epsilon\to0^+}\frac{|\nabla (f+\epsilon g)|^2-|\nabla f|^2}{2\epsilon}
  \end{equation}
    exists and is in $L^1(X)$. This provides a canonical Dirichlet form
 \begin{equation*}
  	\mathscr E(f,g):=\int_X\ip{\nabla f}{\nabla g}{\rm d}\meas,\qquad \forall\ f,g\in W^{1,2}(X).
 \end{equation*}
     We denote by $\Delta$   the infinitesimal generator  of $\mathscr E$ with domain $D(\Delta)$  and by   $\{H_t:=e^{t\Delta}\}_{t\gs0}$ the corresponding  semi-group (heat flow).

Several equivalent definitions exist for the \emph{Riemannian curvature-dimension condition} on metric measure spaces \cite{EKS15, AMS16, AGS14b}.
\begin{defn}\label{BR-def-2.1}
Letting $K\in\mathbb R$ and $N\in[1,+\infty)$, a metric measure space $(X, d, \meas)$ is called an $RCD(K, N)$-space if the following conditions hold:
   \begin{enumerate}
       \item  It is infinitesimally Hilbertian.
       \item  For any $f\in D(\Delta )$ with $\Delta f\in W^{1,2}(X)$, it holds  the weak Bochner inequality
 \begin{equation}\label{BR-equ-2.2}
 \int_X|\nabla f|^2\Delta g{\rm d}\meas\gs \int_X\Big( \frac{(\Delta f)^2}{N} +\ip{\nabla f}{\nabla \Delta f}+K|\nabla f|^2\Big)g{\rm d}\meas
 \end{equation}
     for any $g\in D(\Delta )\cap L^\infty(X)$ with $\Delta g\in L^\infty(X)$ and $g\gs 0$.
 \item   There is a point $x_0\in X$ such that $\meas(B_r(x_0))\ls c_1 e^{c_2r^2}$ for all $r>0$, for some   constants $c_1,c_2>0$.
 \item For any $f\in W^{1,2}(X)$ with $|\nabla f|\ls 1$ $\meas-$a.e. in $X$, there exists a representative $g\in Lip(X)$ with Lipschitz constant 1 such that $g=f$ $\meas-$a.e. in $X$.
  \end{enumerate}
\end{defn}

The main examples in the class of $RCD(K,N)$ spaces include the Ricci limit spaces of the Cheeger-Colding theory \cite{Stu06b,LV09,AGS14a,AGS14b} and finite-dimensional Alexandrov spaces with curvature bounded from below \cite{Pet11,ZZ10}.

From now on, we assume always that $(X,d,\meas)$ is an $RCD(K,N)$ space for some $K\in\mathbb R$ and $N\in [1,+\infty)$.
When $N>1$, it holds the generalized Bishop-Gromov inequality (see, for example, \cite{EKS15}) that
\begin{equation}\label{BR-equ-2.3}
\frac{\meas\big(B_R(x)\big)}{V_{N,K}(R)}\ls \frac{\meas\big(B_r(x)\big)}{V_{N,K}(r)}, \qquad \forall 0<r<R,\quad \forall x\in X,
\end{equation}
where the function
$V_{N,K}(R)$ is given by
\begin{equation*}
V_{N,K}(R):=\int_0^R\mathfrak{s}^{N-1}_{\frac{K}{N-1}}(\tau){\rm d}\tau,\qquad
\mathfrak{s}_{k}(\tau)=
\begin{cases}
\frac{\sin(\sqrt k\cdot \tau)}{\sqrt k}& {\rm if} \quad k>0,\\
\tau & {\rm if} \quad k=0,\\
\frac{\sinh(\sqrt {-k}\cdot \tau)}{\sqrt {-k}}& {\rm if} \quad k<0.
\end{cases}
\end{equation*}
In particular, if $K\ls0$ and $N>1$, it implies the doubling property of $\meas$:  for any $r<s<R$,
 \begin{equation}\label{BR-equ-2.4}
 	\frac{\meas(B_{s}(x))}{\meas(B_{r}(x))}\ls \exp\left(\sqrt{-K(N-1)}\cdot R\right) \left(\frac{s}{ r}\right)^N:=C_{N,K,R}  \cdot\left(\frac{s}{ r}\right)^N.
 	 \end{equation}
It was proved in \cite{Raj12} that any $RCD(K,N)$ space with $K\ls0$ supports the following locally weak (1,1)-Poincar\'e inequality:
\begin{equation}\label{equ-add-Poincare-ineq}
	\fint_{B_r(x)}|f-f_{B}|{\rm d}\meas\ls 2^{N+2} r e^{2\sqrt{(N-1)|K|}r}\fint_{B_{2r}(x)}|\nabla f|{\rm d}\meas
\end{equation}
for any locally Lipschitz function $f$ on $X$, where $f_B:=\fint_{B_r(x)}f{\rm d}\meas.$

 \subsection{Calculus on a local domain $\Omega\subset X$}

\begin{defn}[Local Sobolev Space] \label{BR-def-2.2}
 Let $\Omega\subset X$ be an open set. A  function $f\in L^2_{\rm loc}( \Omega)$ belongs to
$W^{1,2}_{\rm loc}(\Omega)$, provided, for any    $\chi\in Lip_0(\Omega)$   it holds
$f\chi\in W^{1,2}(X)$, where $f\chi$ is understood to be  $0$ outside of $\Omega$.
\end{defn}

Let $f\in W^{1,2}_{\rm loc}(\Omega)$,  the function $|\nabla f| : \Omega\to[0,\infty]$ is $\meas$-a.e. defined by
$$|\nabla f| := |\nabla (\chi f)|,\ \ \meas-a.e. \,\mathrm{on} \ \{\chi=1\},$$
 for any $\chi$ as above.
The space
$$W^{1,2}(\Omega):=\{f\in W^{1,2}_{\rm loc}(\Omega)\big|\  f, |\nabla f|\in L^2(\Omega)\},$$
and the space $W_0^{1,2}(\Omega)$ is defined as the $W^{1,2}(X)$-closure of the space of functions $f\in Lip_0(\Omega)$, where $f\in Lip_0(\Omega)$ is understood to be $0$ outside of $\Omega$.

Recall from \cite{KM96} that the \emph{Sobolev 2-capacity} of the set  $E\subset X$:
$${\rm Cap}_2(E):=\inf\big\{\|f\|^2_{W^{1,2}(X)}\big|\ f\in W^{1,2}(X)\ {\rm such \ that }\ f\gs 1\ {\rm on\ a\ neighborhood\ of }\ E\big\}.$$
If there is no such a function $f$, we set ${\rm Cap}_2(E)=\infty.$

A property holds $2$-q.e. ($2$-quasi everywhere), if it holds except of a set $Z$ with ${\rm Cap}_2(Z)=0$. Since ${\rm Cap}_2(\overline{Z})={\rm Cap}_2(Z),$ we may assume that the except set $Z$ is closed. A function $f :X\to[-\infty,\infty]$ is called $2$-$quasi\ continuous$ in $X$ if for each $\epsilon>0$, there is a closed set $F_\epsilon$ such that ${\rm Cap}_2(F_\epsilon)<\epsilon$ and the restriction $f|_{X\backslash F_\epsilon}$ is continuous.
It is well-known (see \cite{KM96}) that for any $W^{1,2}(X)$-function $f$ its Lebesgue representation
$$\lim_{r\to0}\fint_{B_r(x)}f(y)\meas(y)$$
exists $2$-q.e. in $X$ and is
 $2$-quasi continuous. We will always use such a representative in this paper. Using this a representation, the $W^{1,2}_0(\Omega)$-function have the following characterizations (see \cite{KKM00, KKST12}).

 \begin{lemma}\label{BR-lem-2.3}

 Let $\Omega$ be a bounded subset in $X$. Then the following are equivalent:
 \begin{itemize}
 	\item [(i)] $f\in W^{1,2}_0(\Omega)$;
 	\item [(ii)]
 there exists  $2$-quasi continuous function $\tilde f\in W^{1,2}(X)$ such that $\tilde f=f$ $\meas$-a.e. in $\Omega$ and $\tilde f$=0 $2$-q.e. in $X\setminus\Omega;$
 \item[(iii)] 	  its zero extension $\bar{f}$ is in  $W^{1,2}(X)$. Here the zero extension $\bar f$ is given by   $\bar f=f$ in $\Omega$ and $\bar f=0$ in $X\setminus\Omega$.
   \end{itemize}
  \end{lemma}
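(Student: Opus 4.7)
The plan is to establish the chain of implications (i) $\Rightarrow$ (ii) $\Rightarrow$ (iii) $\Rightarrow$ (i), following the standard capacitary arguments of Kilpel\"ainen-Kinnunen-Martio \cite{KKM00} and Kinnunen-Korte-Shanmugalingam-Tuominen \cite{KKST12}. The essential inputs from the $RCD(K,N)$ structure are the doubling property (\ref{BR-equ-2.4}) and the weak Poincar\'e inequality (\ref{equ-add-Poincare-ineq}), which are precisely the hypotheses under which this capacitary theory is developed in the metric setting.

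For (i) $\Rightarrow$ (ii), I would select $f_n \in Lip_0(\Omega)$ with $f_n \to f$ in $W^{1,2}(X)$ (each $f_n$ understood as zero-extended outside $\Omega$). A standard Chebyshev-type estimate for $\mathrm{Cap}_2$, relying only on doubling and Poincar\'e, permits extraction of a subsequence converging $2$-q.e. to a $2$-quasi continuous representative $\tilde f \in W^{1,2}(X)$; since each $f_n$ is identically zero on $X \setminus \Omega$, the limit $\tilde f$ vanishes $2$-q.e. there. For (ii) $\Rightarrow$ (iii), observe that $\tilde f$ and the zero extension $\bar f$ coincide $\meas$-a.e. on $X$: they agree a.e. on $\Omega$ by hypothesis, and both vanish $\meas$-a.e. on $X \setminus \Omega$ (since $2$-q.e.~vanishing implies $\meas$-a.e.~vanishing). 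Therefore $\bar f = \tilde f \in W^{1,2}(X)$.

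The main content is (iii) $\Rightarrow$ (i). I would proceed in three steps. First, truncate $\bar f$ at height $M$ to reduce to the bounded case. Second, construct Lipschitz cutoffs $\eta_\varepsilon \in Lip(X)$ with $0 \le \eta_\varepsilon \le 1$, $\eta_\varepsilon \equiv 0$ on an $\varepsilon$-neighborhood of $X \setminus \Omega$ and $\eta_\varepsilon \equiv 1$ outside a $2\varepsilon$-neighborhood; the products $\eta_\varepsilon \bar f$ are compactly supported in $\Omega$, and one shows $\eta_\varepsilon \bar f \to \bar f$ in $W^{1,2}(X)$ as $\varepsilon \to 0$. Third, approximate each $\eta_\varepsilon \bar f$ in $W^{1,2}(X)$ by Lipschitz functions with the same compact support, e.g.~via the heat flow $H_t$ followed by a further Lipschitz cutoff.

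The main obstacle is step two: controlling the term $\bar f\,\nabla \eta_\varepsilon$, which is concentrated in a thin shell near $\partial\Omega$ where $|\nabla \eta_\varepsilon| \lesssim 1/\varepsilon$. One needs $\bar f$ to be small on that shell in an $L^2$-averaged sense. This is exactly what quasi-continuity furnishes: once (iii) $\Rightarrow$ (ii) is obtained (by the same capacitary argument as above, applied directly to $\bar f$), the quasi-continuous representative of $\bar f$ vanishes $2$-q.e.~on $X \setminus \Omega$, and a capacitary-Poincar\'e estimate then forces $\int_{\{0 < d(\cdot, X \setminus \Omega) < 2\varepsilon\}} |\bar f|^2 \, \varepsilon^{-2} \, d\meas \to 0$, completing the proof.
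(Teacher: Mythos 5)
Your chain (i)\,$\Rightarrow$\,(ii)\,$\Rightarrow$\,(iii)\,$\Rightarrow$\,(i) differs structurally from the paper's: the paper dispatches (i)\,$\Leftrightarrow$\,(ii) by citing \cite[Remark 5.10]{KKM00} (invoking reflexivity of $W^{1,2}(X)$), declares (iii)\,$\Rightarrow$\,(ii) ``obvious'' via the $2$-quasi-continuous representative of $\bar f$, and then does (ii)\,$\Rightarrow$\,(iii) in one line (simply noting $\tilde f = \bar f$ $\meas$-a.e.). You instead unpack the capacitary Chebyshev argument for (i)\,$\Rightarrow$\,(ii) and supply a genuine cutoff-plus-Hardy argument for (iii)\,$\Rightarrow$\,(i), which is essentially the content of the references the paper leans on. Both of your steps (i)\,$\Rightarrow$\,(ii) and (ii)\,$\Rightarrow$\,(iii) are correct and standard; the extra work you do buys a self-contained argument at the cost of length.

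The soft spot is the same one the paper glosses over: the claim that the quasi-continuous representative of $\bar f$ vanishes $2$-q.e.\ on $X\setminus\Omega$. You assert this follows ``by the same capacitary argument as above, applied directly to $\bar f$,'' but the earlier argument used a sequence $f_n \in Lip_0(\Omega)$ that vanishes identically on $X\setminus\Omega$; for (iii) you only have $\bar f\in W^{1,2}(X)$, and an arbitrary Lipschitz approximating sequence for $\bar f$ need not vanish on $X\setminus\Omega$, so the Chebyshev extraction only produces \emph{some} quasi-continuous representative with no control on $X\setminus\Omega$. What is actually needed is the nontrivial fact that a quasi-continuous $W^{1,2}(X)$-function equal to $0$ $\meas$-a.e.\ on $X\setminus\Omega$ must vanish $\mathrm{Cap}_2$-q.e.\ there. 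This holds at density-$1$ points of $X\setminus\Omega$, and extending it to all of $X\setminus\Omega$ requires a regularity hypothesis on $\Omega$ (e.g.\ a measure-density or capacity-density condition on the complement, as is guaranteed in the paper's applications by the exterior ball condition via Lemma~\ref{BR-lem-5.2}); without such a hypothesis one can cook up counterexamples where $\partial\Omega$ contains a $\meas$-null, positive-capacity piece. The paper's ``(iii)\,$\Rightarrow$\,(ii) is obvious'' is no more detailed on this point, so you are matching the paper's level of rigor here, but if you were to flesh this out you should supply the density argument at quasi-every point of $X\setminus\Omega$ rather than appeal to the (inapplicable) subsequence extraction.
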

  \begin{proof}
  $({\rm i})\Leftrightarrow ({\rm ii})$ is proved in   \cite[Remark 5.10]{KKM00}, since $W^{1,2}(X)$ is releflexive.

  $({\rm iii})\Rightarrow ({\rm ii})$ is obvious, by using the $2$-quasi contnuious representation of $\bar f$.

  For $({\rm ii})\Rightarrow ({\rm iii})$, let $f\in W^{1,2}_0(\Omega)$ and let $\tilde f$ is given in (ii). Therefore $\tilde f= \bar f$ $\meas$-a.e. in $X$, which implies $\bar f\in W^{1,2}(X)$.
     \end{proof}

We fix a bounded domain $\Omega\subset X$ satisfying  ${\rm diam}(\Omega)\ls {\rm diam}(X)/a$ for some $a>1$ and $\meas(\partial\Omega)=0$. The canonical Dirichlet form, $(\mathscr E_{\Omega},W^{1,2}_0(\Omega))$, is given by
\begin{equation*}
\mathscr E_\Omega(f):=\int_\Omega|\nabla f|^2{\rm d}\meas,\qquad f\in W^{1,2}_0(\Omega).
\end{equation*}
This canonical Dirichlet form is strongly local and regular   (see, for example, the proof of \cite[Lemma 6.7]{AGS14b}).
  The associated infinitesimal generator of $(\mathscr E_{\Omega}, W^{1,2}_0(\Omega))$, denoted by $\Delta_\Omega$ with domain $D(\Delta_\Omega)$, is a non-positive definite self-adjoint operator. The associated analytic semi-group is $(H^\Omega_tf)_{t\gs0}$ for any $f\in L^2(\Omega)$.
Since $\Omega$ is bounded, from   \cite[Theorem 13.1]{HK00}, the embedding $W^{1,2}_0(\Omega)\subset L^2(\Omega)$ is compact.   Hence the operator $(Id-\Delta_\Omega)^{-1}$ is compact. The spectral theorem implies that the spectrum is discrete. We denote by
$$0<\lambda_1^{\Omega}\ls \lambda_2^{\Omega}\ls \cdots\ls \lambda_j^{\Omega}\ls \cdots, \quad j\in \mathbb N,$$
the (Dirichlet) eigenvalues of  $\Delta_\Omega$. For each $\lambda_j^{\Omega}$, the associated eigenfunction is $\phi_j^{\Omega}$, i.e.,
\begin{equation}\label{BR-equ2.5}
\Delta_\Omega \phi_j^{\Omega}=- \lambda_j^{\Omega}\phi_j^{\Omega}.
\end{equation}
 We normalize them so that $\|\phi_j^{\Omega}\|_2=1 $ for each $j\in\mathbb N$. It is well-known that the sequence $\{\phi_j\}_{j\in\mathbb N}$ forms a complete basis of $L^2(\Omega)$, and that the (local) Dirichlet heat flow is given by
 $$ H^\Omega_tf(x)=\int_\Omega p^\Omega_t(x,y)f(y){\rm d}\meas,\quad t\geqslant 0,\ \ \forall f\in L^2(\Omega),$$
  where
\begin{equation}\label{BR-equ-2.6}
p^{\Omega}_t(x,y)=\sum_{j\gs1}e^{-\lambda^{\Omega}_j t}\phi^{\Omega}_j(x)\phi^{\Omega}_j(y),\qquad \forall (x,y,t)\in \Omega\times\Omega\times(0,\infty).
\end{equation}
 is the (local) Dirichlet heat kernel. It satisfies the semi-group property
 $$p^\Omega_{t+t_0}(x,y)=H^\Omega_tp^\Omega_{t_0}(x,\cdot)(y) $$
 and  the $L^2$-contraction
 \begin{equation}\label{BR-equ-2.7}
 \|H^\Omega_tf\|_{L^2(\Omega)}\ls e^{-\lambda_1^\Omega\cdot t}\|f\|_{L^2(\Omega)},\quad \forall f\in L^2(\Omega).
 \end{equation}

 The weak maximum principle and the Gaussian upper bound of heat kernel in \cite{Stu95, JLZ16} imply that
\begin{equation}\label{BR-equ-2.8}
p^{\Omega}_t(x,y) \ls  \frac{C_0\cdot e^{C_0t}}{\meas\left(B_{\sqrt t}(x)\right)}\exp\left(-\frac{d^2(x,y)}{5t}\right)	
\end{equation}
 for any  $(x,y,t)\in \Omega\times\Omega\times(0,\infty)$, where $C_0$ depends only on $N,K$.

	The class of {\textit{test  functions}} on $RCD(K,N)$ spaces was introduced in \cite{AGS15,Sav14}:
	\begin{equation*}
		\mathrm{Test}^\infty(X):=\left\{f\in D(\Delta)\cap L^\infty(X)\big|\ |\nabla f|\in L^\infty(X),\ \Delta f\in L^\infty(X)\cap W^{1,2}(X)\right\},
	\end{equation*}
	which is an algebra and is dense in $W^{1,2}(X)$ (see \cite[Sect. 6.1.3]{GP20}).  Given any  $\phi\in \mathrm{Test}^\infty(X)$, it has a (Lipschitz) continuous representative. Hence we always assume that every function in  $  \mathrm{Test}^\infty(X)$ is continuous. For any given open subset $\Omega\subset X$, we set
	\begin{equation*}
			\mathrm{Test}_c^{\infty}(\Omega):=\left\{\phi \in  \mathrm{Test}^{\infty}(X) :\  {\rm    supp}(\phi)\subset \Omega \ {\rm and\ supp}(\phi) \ {\rm is\ compact}\right\}.
	\end{equation*}
It is clear that if $\phi\in {\rm Test}_c^\infty(\Omega)$ then it is in $D(\Delta_\Omega)$ and $\Delta_\Omega\phi=(\Delta\phi)|_{\Omega}.$

 Let us recall the definition of the  \emph{distributional Laplacian}.
 \begin{defn}\label{BR-def-2.4}
 Given a function $f\in L^{1}_{\rm loc}(\Omega)$,  its distributional Laplacian $\mathbf\Delta  f$ is defined as a (linear) functional
\begin{equation*}
{\bf \Delta} f(\phi):=\int_\Omega f\Delta \phi {\rm d}\meas,\qquad \forall \phi\in {\rm Test}_c^\infty(\Omega).
\end{equation*}
 \end{defn}
\noindent When $f\in W_{\rm loc}^{1,2}(\Omega)$,  $\mathbf\Delta f$ can be extended to a functional on $Lip_0(\Omega).$
 Therefore, the definition of ${\bf \Delta}f$ is compatible with the one in \cite{Gig15}.	When  $f\in W^{1,2}(\Omega)$, $\mathbf\Delta f$ can be extended to a functional on $W^{1,2}_0(\Omega).$

Given $f\in L^{1}_{\rm loc}(\Omega)$ and a signed Radon measure $\mu$, the notion ``${\bf \Delta} f \gs \mu$ in the sense of distributions" means that
 $$\int_{\Omega}  f \Delta \phi {\rm d}\meas\gs \int_\Omega  \phi {\rm d}\mu,\qquad \forall\ \phi\in {\rm Test}^\infty_c(\Omega),\ \phi\gs0.$$	
In this case, the functional ${\bf \Delta}f$ provides a signed Radon measure on $\Omega$, denoted by ${\bf \Delta}f$ again  (by Riesz representation theorem). Therefore, in this case, we write also  ``${\bf \Delta}f\gs \mu$ as measures". If the measure $\mu=g\cdot\meas$ for some $g\in L^1_{\rm loc}(\Omega)$, we  denoted by ``${\bf \Delta} f \gs g$ in the sense of distributions" too.

Recall the Maximum principle as follows (see \cite{Che99}).   Let $f\in W^{1,2}(\Omega)$ and  ${\bf \Delta}f\gs 0$ as measures. If $f\ls c$ on $\partial \Omega$ in the sense of $(f-c)^+\in W^{1,2}_0(\Omega)$  then $f\ls c$, $\meas$-a.e. in $\Omega.$ In particular, if $f\in W^{1,2}(\Omega)\cap C(\overline{\Omega})$,   if $f\ls c$ on $\partial \Omega$, and if ${\bf \Delta} f\gs0$ as measures, then $f\ls c$ on $\Omega.$

When $f\in W^{1,2}_0(\Omega)$, it was proved \cite{Gig15} that if there is   $g\in L^2(\Omega)$ such that $\mathbf \Delta f= g$ in the sense of distributions  then $f\in D(\Delta_{\Omega})$ and $\Delta_\Omega f=g.$
 Conversely, it is clear that if $f\in D(\Delta_{\Omega})$ and $\Delta_\Omega f=g,$ then $\mathbf \Delta f= g$ in the sense of distributions.

\begin{lemma}
\label{BR-lem-2.5}
Let $\Omega$ be a bounded domain of $(X,d,\meas)$.  Suppose that  $f\in L^{1}_{\rm loc}(\Omega)$ satisfies  $\mathbf \Delta f=h$ in the sense of distributions for some $h\in L^\infty_{\rm loc}(\Omega)$. Then it holds:
 \begin{itemize}
 	\item [(1)]    $f $ is in $Lip_{\rm loc}(\Omega)$ (see \cite[Corollary 1.5]{PZZ25}).
 	\item [(2)]  Suppose that $ f$ is harmonic on $\Omega$ (i.e. ${\bf \Delta} f=0$) and $f-g\in W^{1,2}_0(\Omega)$ for some $g\in W^{1,2}(\Omega)\cap C(\overline\Omega)$, if $\Omega$ satisfies a {\emph{unform exterior density condition}}, that is,
 	$$\meas\big(B_r(x)\setminus\Omega\big)\gs C\meas\big(B_r(x)\big),\qquad \forall r\in(0,r_0),\ \forall x\in\partial\Omega$$ for some
 	$r_0>0$ and $C>0$,
 	then $f\in C(\overline\Omega)$ (see \cite[Theorem 2.13 and Remark 2.15]{Bjo02}).
\end{itemize}
 \end{lemma}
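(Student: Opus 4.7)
The lemma splits into two independent claims, which I would prove by rather different techniques. Part (1) is an interior Lipschitz-regularity statement for a distributional solution of $\mathbf{\Delta}f = h$ with $h$ locally bounded; part (2) is a boundary-continuity statement for the Dirichlet problem under an exterior density hypothesis.

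For part (1), my plan has three stages. First, upgrade $f \in L^1_{\rm loc}(\Omega)$ to $f \in W^{1,2}_{\rm loc}(\Omega)$ via a Caccioppoli argument: test the distributional identity $\mathbf{\Delta}f = h$ against $\phi^2 f$ for a cutoff $\phi \in \mathrm{Test}^\infty_c(\Omega)$ to obtain
\[
\int \phi^2 |\nabla f|^2 \, \mathrm{d}\meas \ls C \int f^2 |\nabla \phi|^2 \, \mathrm{d}\meas + C \int |h|\,|\phi^2 f|\, \mathrm{d}\meas.
\]
Second, apply the weak Bochner inequality (\ref{BR-equ-2.2}) to $f$ with $\Delta f = h \in L^\infty_{\rm loc}$, yielding heuristically
\[
\tfrac{1}{2}\mathbf{\Delta}|\nabla f|^2 \gs \ip{\nabla h}{\nabla f} + K|\nabla f|^2,
\]
so that $|\nabla f|^2$ is, modulo lower-order terms, a subsolution of an elliptic equation with bounded data. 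The volume doubling (\ref{BR-equ-2.4}) and Poincaré inequality (\ref{equ-add-Poincare-ineq}) then enable a Moser iteration on $|\nabla f|^2$, giving a local $L^\infty$ bound on $|\nabla f|$. Third, the Sobolev-to-Lipschitz property of Definition \ref{BR-def-2.1}(4) converts this bound into a locally Lipschitz representative. The main obstacle is that $h$ has no a priori gradient, forcing one to regularize via $h_\varepsilon := H_\varepsilon h$, apply Bochner to the associated smoother solutions, and show the gradient bound is stable under $\varepsilon \to 0^+$; this is essentially the approach of \cite{PZZ25}.

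For part (2), continuity of $f$ on $\overline{\Omega}$ is a boundary-regularity problem, which I would address by the Perron-barrier method in metric spaces. Interior continuity of $f$ is immediate from part (1) with $h \equiv 0$. At a boundary point $x_0 \in \partial\Omega$ the task is to show $\lim_{x \to x_0} f(x) = g(x_0)$. For each $\varepsilon > 0$ I would construct a non-negative superharmonic barrier $w$ on $\Omega \cap B_r(x_0)$ satisfying $w(x) \to 0$ as $x \to x_0$ and $w \gs 1$ on $\partial B_r(x_0) \cap \Omega$; the natural candidate is the capacitary potential of $\overline{B_{r/2}(x_0)} \setminus \Omega$ relative to $B_r(x_0)$. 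The uniform exterior density condition gives a Maz'ya-type lower bound
\[
\mathrm{Cap}_2\bigl(\overline{B_{r/2}(x_0)} \setminus \Omega,\, B_r(x_0)\bigr) \gs c \,\frac{\meas(B_r(x_0))}{r^2},
\]
which is exactly the Wiener-type condition ensuring that $x_0$ is regular. With $w$ in hand, the maximum principle applied to $f - g(x_0) \mp \varepsilon \mp C w$ (using continuity of $g$ at $x_0$ and $f - g \in W^{1,2}_0(\Omega)$) sandwiches $f$ near $x_0$, yielding $f(x) \to g(x_0)$. The main obstacle is verifying that the volume condition implies the required capacity density condition (via doubling and Poincaré), and that divergence of the associated Wiener integral yields pointwise regularity in this non-smooth setting; this is exactly the program carried out in \cite{Bjo02}.
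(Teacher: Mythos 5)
The paper does not prove Lemma \ref{BR-lem-2.5} at all: part (1) is cited to \cite[Corollary 1.5]{PZZ25} and part (2) to \cite[Theorem 2.13 and Remark 2.15]{Bjo02}, with no argument given. Your proposal is a reconstruction of the proofs in those references, and you acknowledge as much, so in spirit you agree with the paper; but there is a genuine gap in the order of operations for part (1). You cannot begin with the Caccioppoli estimate obtained by ``testing $\mathbf{\Delta}f = h$ against $\phi^2 f$'': the distributional Laplacian is, a priori, only a functional on $\mathrm{Test}^\infty_c(\Omega)$ (or $Lip_0(\Omega)$ once one knows $f\in W^{1,2}_{\rm loc}$), and when $f$ is merely $L^1_{\rm loc}$ the product $\phi^2 f$ is not an admissible test function, nor does the right-hand side $\int f^2|\nabla\phi|^2$ even make sense. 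The step from $L^1_{\rm loc}$ to $W^{1,2}_{\rm loc}$ is precisely the hard Weyl-lemma content of \cite{PZZ25} and has to be done first, by other means (mollifying $f$ itself, e.g.\ via the heat flow, comparing $H_t f$ with $f$, and passing to the limit), before Caccioppoli, the Bochner inequality with regularized $h$, Moser iteration on $|\nabla f|^2$, and the Sobolev-to-Lipschitz property can be invoked; once that reordering is made, the rest of your stage two and stage three sketch is the right strategy. Your outline of part (2) — capacitary barriers, the Maz'ya capacity-density lower bound implied by the uniform exterior volume density condition, divergence of the Wiener integral, and the maximum-principle sandwich using $f-g\in W^{1,2}_0(\Omega)$ and continuity of $g$ — is a faithful account of \cite{Bjo02} and is correct as far as it goes.
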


The following corollary of the Laplacian comparison theorem \cite{Gig15} will be used.

\begin{lemma}\label{BR-lem-2.6}
For each $y_0\in X$, $R\in(0,1)$ and $R_1>R$, there exists a function $h\in Lip_0(X)$ satisfying the following properties:
\begin{itemize}
	\item [(i)] $h\gs 0$ on $X$, $h=0$ on $B_{R/2}(y_0)$, and  $h\gs 1 $ on $B_{R_1}(y_0)\setminus B_R(y_0)$,
	\item [(ii)] $|\nabla h|\ls \frac{C_{1}}{R}$ $\meas$-a.e. in $B_R(y_0)\setminus \overline{B_{R/2}(y_0)}$,
	\item [(iii)] ${\bf\Delta} h\ls -\frac{2}{R^2}\cdot \meas$ in $B_R(y_0)\setminus \overline{B_{R/2}(y_0)}$ in the sense of distributions,
\end{itemize}
where the constant $C_1$ depends only on $N$ and $K$.
\end{lemma}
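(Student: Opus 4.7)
My plan is to take $h(x) := \phi(d(y_0, x))$ for a carefully chosen radial profile $\phi$, and derive (i)--(iii) from Gigli's Laplacian comparison theorem on $RCD(K,N)$ spaces. That theorem gives the distributional inequality $\mathbf{\Delta}d_{y_0} \leq \mathcal{H}_{K,N}(d_{y_0})\cdot\meas$ on $X\setminus\{y_0\}$, where $\mathcal{H}_{K,N}$ is the mean-curvature profile of the geodesic spheres in the model space. Since $R \in (0,1)$ and the relevant radii lie in $(0,1]$, there is a constant $c_0 = c_0(N,K)$ with $\mathcal{H}_{K,N}(r) \leq c_0/r$ for all $r$ in the annular range of interest; I will only use this pointwise upper bound together with the $\meas$-a.e.\ identity $|\nabla d_{y_0}| = 1$.

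The heart of the construction is a two-point ODE problem on $[1/2,1]$. I seek $\tilde{\phi}\in C^2([1/2,1])$ satisfying the equality $\tilde{\phi}''(s) + (c_0/s)\,\tilde{\phi}'(s) = -2$ with boundary conditions $\tilde{\phi}(1/2) = 0$, $\tilde{\phi}(1) \geq 1$, and the sign condition $\tilde{\phi}'(s) \geq 0$ on $[1/2,1]$. Integrating with the factor $s^{c_0}$ yields the one-parameter family $\tilde{\phi}'(s) = C_\ast s^{-c_0} - 2s/(c_0+1)$, and after imposing $\tilde{\phi}(1/2) = 0$ one checks that both monotonicity of $\tilde{\phi}$ and the lower bound $\tilde{\phi}(1) \geq 1$ can be simultaneously arranged by taking $C_\ast$ sufficiently large in terms of $c_0$ only. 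All $C^2$-norms of the resulting $\tilde{\phi}$ on $[1/2,1]$ are then bounded by a constant depending solely on $N$ and $K$.

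I extend $\tilde{\phi}$ to a nonnegative, continuous, compactly-supported one-variable profile $\phi\colon [0,\infty)\to [0,\infty)$ by: $\phi\equiv 0$ on $[0,R/2]$; $\phi(r) = \tilde{\phi}(r/R)$ on $[R/2,R]$; $\phi \equiv \tilde{\phi}(1)$ on $[R, R_1 + 1]$; and a linear cutoff to $0$ on $[R_1+1, R_1+2]$. Setting $h(x) := \phi(d(y_0, x))$, property (i) is immediate from the choice of $\phi$ and $\tilde{\phi}$. For (ii), on the annulus $B_R(y_0)\setminus\overline{B_{R/2}(y_0)}$ one has $|\nabla h| = |\tilde{\phi}'(d_{y_0}/R)|/R \leq \|\tilde{\phi}'\|_\infty / R$, which gives the desired bound $|\nabla h| \leq C_1/R$ with $C_1 = C_1(N,K)$.

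For (iii), the chain rule for distributional Laplacians of compositions with $d_{y_0}$, combined with $\tilde{\phi}' \geq 0$, $|\nabla d_{y_0}|=1$ $\meas$-a.e., and the comparison theorem, yields on the open annulus
\begin{equation*}
\mathbf{\Delta}h \leq \Bigl[\phi''(d_{y_0}) + \phi'(d_{y_0})\,\mathcal{H}_{K,N}(d_{y_0})\Bigr]\cdot\meas \leq \frac{1}{R^2}\Bigl[\tilde{\phi}''(s) + (c_0/s)\,\tilde{\phi}'(s)\Bigr]\cdot\meas = -\frac{2}{R^2}\cdot\meas,
\end{equation*}
where $s = d_{y_0}/R$. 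The main technical obstacle is the rigorous application of the chain rule at the level of distributions in the $RCD$ setting, where $d_{y_0}$ is not itself in the domain of the Laplacian operator; this is precisely where $\tilde{\phi}' \geq 0$ is indispensable, since it lets me multiply the measure-inequality $\mathbf{\Delta}d_{y_0} \leq \mathcal{H}_{K,N}(d_{y_0})\meas$ by the nonnegative continuous coefficient $\phi'(d_{y_0})$ without losing the sign. The required chain rule for compositions of $d_{y_0}$ with $C^2$ profiles in $RCD(K,N)$ spaces follows from a standard mollification and truncation argument, justified in Gigli's framework \cite{Gig15}.
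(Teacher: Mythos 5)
Your proposal is correct and yields the lemma, but the construction of the barrier is genuinely different from the one in the paper. Both arguments start from Gigli's Laplacian comparison theorem, reduced to $\mathbf{\Delta}\rho\ls C\rho^{-1}\meas$ (with $\rho=d(y_0,\cdot)$ and $C=C(N,K)$) on the punctured ball since $R\ls1$, and both pass through the distributional chain rule for $\phi\circ\rho$ with $\phi$ smooth. The paper then makes a fully explicit choice: with $l:=C+1$ it computes $\mathbf{\Delta}\rho^{-l}\gs 2l\,\rho^{-l-2}\meas$ directly from the chain rule, sets $h_1:=2^l-R^l\rho^{-l}$ (which vanishes at $\rho=R/2$ and exceeds $1$ at $\rho=R$ because $l\gs1$), and checks (ii)--(iii) by inspection. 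You instead set up the rescaled two-point ODE $\tilde\phi''+(c_0/s)\tilde\phi'=-2$ on $[1/2,1]$ with $\tilde\phi(1/2)=0$, $\tilde\phi(1)\gs1$, $\tilde\phi'\gs0$, and observe that a large enough integration constant $C_\ast$ makes all three hold simultaneously. The two approaches are equally valid; the paper's closed-form power profile is a bit shorter and gives an explicit $C_1=l\cdot 2^{l+1}$, while your ODE formulation is more systematic and would adapt more easily if one wanted a different right-hand side. One small point worth sharpening: you write that $\tilde\phi'\gs0$ is ``indispensable'' for the chain rule itself, but the chain rule $\mathbf{\Delta}(\phi\circ\rho)=\phi'(\rho)\mathbf{\Delta}\rho+\phi''(\rho)|\nabla\rho|^2\meas$ holds regardless of the sign of $\phi'$; the sign is only needed afterward, to preserve the direction of the inequality when you substitute the comparison bound for $\mathbf{\Delta}\rho$. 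Indeed, the paper uses a \emph{decreasing} profile $\rho^{-l}$ and simply flips the inequality twice (once when multiplying by the negative $\phi'$, once when subtracting $R^l\rho^{-l}$ from a constant), which is an equally legitimate way to handle the signs.
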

\begin{proof}
From the Laplacian comparison \cite{Gig15} and $R\ls 1$, we have
$${\bf \Delta}\rho \ls  C\rho^{-1}\cdot \meas \quad {\rm on}\ \ B_R(y_0)\setminus\{y_0\},$$
in the sense of distributions, where $\rho$ is the distance function $\rho(x):=d(y_0,x)$ and the constant $C>0$ depends only on $N, K$. By the Chain rule \cite{Gig15} and letting $l:=C+1\gs 1$, we have
\begin{equation*}
	\begin{split}
		{\bf \Delta}\rho^{-l}&=(-l)\rho^{-l-1}\cdot {\bf \Delta}\rho+(-l)(-l-1)\cdot\rho^{-l-2}|\nabla \rho|^2\cdot\meas\\
		&\gs l(-C+l+1)\rho^{-l-2}\cdot \meas =2l\cdot \rho^{-l-2}\cdot\meas	\end{split}
\end{equation*}
in the sense of distributions on $B_R(y_0)\setminus\{y_0\}$. We put
$h_1(x):=2^l-R^l\cdot \rho^{-l}(x)$
and then
$${\bf \Delta}h_1\ls - \frac{2l}{R^2}\left(\frac R \rho\right)^{l+2}\cdot\meas \ls -\frac{2}{R^2}\cdot \meas,$$
in the sense of distributions on $B_R(y_0)\setminus\{y_0\}$, where we have used $\rho\ls R$ and $l\gs1$. At last, we define $h:=\eta \cdot(h_1)_+$, where  $\eta(x) $ is a nonnegative function in ${\rm Test}^\infty_c(X)$ such that $\eta(x)\equiv 1$ in $B_{R_1}(y_0)$. It is easy to check that $h$ satisfies all assertions with $C_1:= l\cdot 2^{l+1}.$
\end{proof}

 \subsection{Sets of finite perimeters and BV functions}

The theory of sets of finite perimeter and functions of bounded variation has been generalized to the setting of  $RCD$ spaces (\cite{ABS19, MMS16, BCM22, BPS23-jems, GM24}).
\begin{defn}\label{BR-def-2.7-BV}
A function $f\in L^1(X)$ is called  a function {\emph{of bounded variation}}, denoted by $f\in BV(X)$, if there exist $f_j\in Lip_{\rm loc}(X)$ converging to $f$ in $L^1(X)$ such that
$$\limsup_{j}\int_X|\nabla f_j|{\rm d}\meas<+\infty.$$
\end{defn}

Given a function $ f \in BV(X)$  and an open
set $A\subset X$, one can define
$$|Df|(A):= \inf\left\{ \liminf_{i\to+\infty}\int_A|\nabla f_i|{\rm d}\meas\ \big|\ f_i\in Lip_{\rm loc}(A).\quad f_i\to f\ \ {\rm in}\ \ L_{\rm loc}^1(A)\right\}.
$$

Indeed, (see \cite{Mir03, ABS19}), the set function $A\mapsto  |Df|(A)$  is the restriction to open sets of a finite Borel measure, which is called the total variation of $f$ and denoted still by $|Df|$. For any Borel set $B$,
$$|Df|(B):= \inf \left\{|Df|(A) \ \big|\  B \subset A,\  A\ {\rm open}\right\}.$$

\begin{defn}\label{BR-def-2.8}
	A Borel set $E\subset X$ with $\meas(E)<\infty$  is called a {\emph {set of finite perimeter}} if $\chi_E\in BV(X)$. In this case, the $|D\chi_E|$ is called the perimeter measure, denoted by $Per_E$.
\end{defn}

 Let $E$ be a set of finite perimeter and $0\ls t\ls 1$. We denote by
 \begin{equation}\label{BR-equ-2.9}
 E^{(t)}:=\left\{ x\in X \big | \ \lim_{r\to0^+}\frac{\meas(E\cap B_r(x))}{\meas(B_r(x))}=t\right\}.
 \end{equation}
The  {\emph{measure-theoretic boundary}}  $\partial^*E:=X\setminus (E^{(0)} \cup E^{(1)})$. Denote
the {\emph{reduced boundary}} of $E$  by $\mathscr FE$, which is defined in  \cite{BPS23-jems,BPS23} to be a subset of $E^{(1/2)}$.   Theorem 3.2 in \cite{BPS23-jems} asserts $Per_E(X\setminus \mathscr FE)=0.$

We also recall the codimension-$1$ Hausdorff measure of $\meas$.
 \begin{defn}\label{BR-def-2.9}
 Given a set $E\subset X$ and  $\alpha\gs0$, the codimension-$\alpha$ Hausdorff measure with respect to $\meas$ is defined by
 	 	 $$\mathscr \meas_{-\alpha}(E):=\lim_{\delta\to0}\mathscr \meas_{\delta,-\alpha}(E),$$
 	 	 where
 	 	 $$\meas_{\delta,-\alpha}(E):=\inf\left\{\sum_{i=1}^\infty\frac{\meas(B_{r_i}(x_i))}{r_i^\alpha}\ \big| \ E\subset\cup_{i=1}^\infty B_{r_i}(x_i),\ 0<r_i\leqslant \delta\right\}.$$
 	 \end{defn}

 Let $E\subset X$ be a set of finite perimeter with $\meas(E)<+\infty$.
 It was proved in \cite{BPS23, BPS23-jems} that
 $$\meas_{-1}\big(X\setminus(E^{(1)}\cup E^{(1/2)}\cup E^{(0)})\big) =0,$$
 and
  \begin{equation*}
 	Per_E(A)=c_n\cdot \meas_{-1}(\partial^*E\cap A),\quad \forall A\ \ {\rm open},
 \end{equation*}
 for  some constant $c_n>0$ depending only on the essential dimension of $X$ (see \cite{BS20} for the definition of the essential dimension).

\section{Boundary estimates for Dirichlet heat kernels and Green functions}

 In this section, we will extend the research of the Dirichlet heat kernels and the Green functions to the $ RCD$ setting.
Let $(X,d,\meas)$ be an $RCD(K, N)$ space. Without loss of generality, we assume that $K<0$ and $N>1$. Let $\Omega$ be a bounded domain satisfying $\meas(\partial\Omega)=0$ and ${\rm diam}(\Omega)\ls {\rm diam}(X)/a$ for some $a>1$.
We also assume that $\Omega$ satisfies the uniform exterior ball condition with radius $R_{\rm ext}\in(0,1)$ in Definition \ref{BR-def-1.5}.

\subsection{Boundary behaviors of Dirichlet heat kernels}
Let us begin with the parabolic comparison principle on metric measure spaces, which is one of useful tools in partial differential equations.

\begin{defn}
	\label{BR-def-3.1}
Let $0<T<\infty$ and let $\Omega_T:=\Omega\times(0,T)$. A function $u\in L^2_{\rm loc}\big((0,T); W^{1,2}_{\rm loc}(\Omega)\big)$ is called a {\emph{super-solution of the heat equation}}, if for every open set $U\Subset\Omega_T$ and for all non-negative function $\phi(x,t)\in Lip(\Omega_T)$ with ${\rm supp}(\phi)\subset U$, we have
$$\int_U u\frac{\partial\phi}{\partial t}{\rm d}\meas {\rm d}t\ls \int_U\ip{\nabla u}{\nabla \phi}
	{\rm d}\meas {\rm d}t.$$
A function $u$ is called a {\emph{sub-solution of the heat equation}} if $-u$ is a super-solution of the heat equation.
\end{defn}

 We recall the parabolic comparison principle given in \cite{KM15-pams}.
\begin{lemma}\label{BR-lem-3.2}
	Let $u\in L^2\big((0,T); W^{1,2}(\Omega)\big)$ be a super-solution of heat equation and let $v\in L^2\big((0,T); W^{1,2}(\Omega)\big)$ be a sub-solution of heat equation. Suppose $u\gs v$ near the parabolic boundary of $\Omega_T$ in the sense that for almost every $0<t<T$ we have the lateral boundary condition
	$$\big(v(\cdot,t)-u(\cdot,t)\big)_+\in W^{1,2}_0(\Omega)$$ and also
	the initial condition
	$$\frac 1 \varepsilon\int_0^\varepsilon\int_\Omega(v-u)_+^2{\rm d}\meas {\rm d}t\to 0,\quad {\rm as}\ \ \varepsilon\to0^+.$$
		Then
		$$u\gs v,\quad \meas\times \mathscr L^1{\rm-a.e.\  in}\ \  \Omega_T,$$ where $\mathscr L^1$ is the 1-dimensional Lebesgue measure on $(0,T)$.
		\end{lemma}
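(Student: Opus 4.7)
The plan is to run the classical parabolic $L^2$-energy argument, compensating for the low time-regularity of $u,v$ by Steklov time-averaging, following the philosophy of \cite{KM15-pams}. Set $w:=v-u$. Subtracting the defining inequalities for the super-solution $u$ and the sub-solution $v$ yields
\begin{equation*}
\int_{\Omega_T} w\,\partial_t\phi\,{\rm d}\meas{\rm d}t\gs \int_{\Omega_T}\ip{\nabla w}{\nabla \phi}\,{\rm d}\meas{\rm d}t
\end{equation*}
for every nonnegative $\phi\in Lip(\Omega_T)$ compactly supported in $\Omega_T$; that is, $w$ is itself a weak sub-solution of the heat equation on $\Omega_T$.

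For $h\in(0,T)$ introduce the Steklov average $w_h(x,t):=\frac{1}{h}\int_t^{t+h}w(x,s)\,{\rm d}s$ on $\Omega\times(0,T-h)$. Testing the above distributional inequality with $\phi(x,s)=\psi(x)\beta(s)$, where $\beta$ is a Lipschitz approximation of $\frac{1}{h}\chi_{[t,t+h]}$, and then extending admissibility from $Lip_0(\Omega)$ to $W^{1,2}_0(\Omega)$ by density, one obtains the pointwise-in-$t$ inequality
\begin{equation*}
\int_\Omega\partial_t w_h(\cdot,t)\,\psi\,{\rm d}\meas+\int_\Omega\ip{\nabla w_h(\cdot,t)}{\nabla \psi}\,{\rm d}\meas\ls 0
\end{equation*}
for a.e.~$t\in(0,T-h)$ and every nonnegative $\psi\in W^{1,2}_0(\Omega)$. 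The lateral boundary hypothesis $(v(\cdot,t)-u(\cdot,t))_+\in W^{1,2}_0(\Omega)$, together with the preservation of $W^{1,2}_0(\Omega)$ under both Steklov averaging (by Jensen's inequality in a Hilbert space) and under truncation by the positive-part operator (valid since $(X,d,\meas)$ is infinitesimally Hilbertian), then legitimates the choice $\psi=(w_h)_+(\cdot,t)$.

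Using $\ip{\nabla w_h}{\nabla(w_h)_+}=|\nabla(w_h)_+|^2$ and $\partial_t w_h\cdot(w_h)_+=\tfrac{1}{2}\partial_t(w_h)_+^2$ $\meas$-a.e., the previous inequality reduces to
\begin{equation*}
\frac{d}{dt}\int_\Omega(w_h)_+^2(\cdot,t)\,{\rm d}\meas\ls -2\int_\Omega|\nabla(w_h)_+(\cdot,t)|^2\,{\rm d}\meas\ls 0.
\end{equation*}
Integrating over $[\varepsilon,t_0]\subset(0,T-h)$, sending $h\to 0^+$ via $L^2$-convergence of $w_h$ to $w$ and lower semicontinuity of the Dirichlet integral, and then averaging in $\varepsilon\in(0,\delta)$ to invoke the initial-data hypothesis $\frac{1}{\delta}\int_0^\delta\!\int_\Omega w_+^2\,{\rm d}\meas{\rm d}s\to 0$, forces $\int_\Omega w_+^2(\cdot,t_0)\,{\rm d}\meas=0$ for a.e.~$t_0\in(0,T)$; hence $u\gs v$ $\meas\times\mathscr L^1$-a.e. in $\Omega_T$. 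The main technical obstacle is the Steklov step: transferring the distributional super/sub-solution identity to a pointwise-in-$t$ weak equation for $w_h$ and verifying that $(w_h)_+(\cdot,t)\in W^{1,2}_0(\Omega)$ qualifies as an admissible test function. This is where the infinitesimal Hilbertianity of $(X,d,\meas)$ and the characterization of $W^{1,2}_0(\Omega)$ in Lemma \ref{BR-lem-2.3} play an essential role.
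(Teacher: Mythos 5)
The paper does not prove this lemma; it simply cites Theorem 4.1 of Kinnunen--Masson \cite{KM15-pams}. What you have written is essentially a reconstruction of that proof: the standard parabolic $L^2$-energy (Caccioppoli) comparison argument, mediated by Steklov time-averaging to compensate for the absence of a time derivative. The overall structure is correct and matches the route taken in the cited paper.

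One step is stated imprecisely and would not survive scrutiny as written: the justification that $\psi=(w_h)_+(\cdot,t)$ is an admissible test function. You argue that $W^{1,2}_0(\Omega)$ is preserved ``under Steklov averaging and under truncation by the positive-part operator.'' But $w(\cdot,t)=v(\cdot,t)-u(\cdot,t)$ is not itself assumed to lie in $W^{1,2}_0(\Omega)$ --- only its positive part is --- so $w_h$ is generally not a $W^{1,2}_0$ function, and $(w_h)_+$ is not ``a truncation of a $W^{1,2}_0$ function.'' Moreover the positive part and the Steklov average do not commute, so you cannot deduce $(w_h)_+\in W^{1,2}_0(\Omega)$ from $(w_+)_h\in W^{1,2}_0(\Omega)$ by swapping the two operations. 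The fix is the pointwise sandwich: $0\ls (w_h)_+\ls (w_+)_h$ $\meas$-a.e., $(w_h)_+\in W^{1,2}(\Omega)$ since $w_h\in W^{1,2}(\Omega)$, and $(w_+)_h\in W^{1,2}_0(\Omega)$ because it is a strong $L^2$-limit of convex combinations (Riemann sums) of $w_+(\cdot,s)\in W^{1,2}_0(\Omega)$ with uniformly bounded Dirichlet energy. Then the characterization in Lemma~\ref{BR-lem-2.3} (quasi-continuous representative vanishing $2$-q.e.\ on $X\setminus\Omega$) applied to the dominating function forces $(w_h)_+\in W^{1,2}_0(\Omega)$. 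With this repaired, the rest of your argument --- the cancellation $\ip{\nabla w_h}{\nabla(w_h)_+}=|\nabla(w_h)_+|^2$, the identity $\partial_t w_h\cdot(w_h)_+=\tfrac12\partial_t(w_h)_+^2$, the Gr\"onwall-type monotonicity of $\int_\Omega(w_h)_+^2$, and the double passage to the limit in $h$ and in the initial time via the assumed averaged initial condition --- is sound.
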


\begin{proof}
	This is \cite[Theorem 4.1]{KM15-pams}.
\end{proof}

To use the parabolic comparison principle to estimate a solution to the heat equation, the key is to find an appropriate barrier function. The following lemma gives some barriers.

\begin{lemma}\label{BR-lem-3.3}
Let $y_0\in X$, $R\in(0,1)$, $R_1>R$ and let $h$ be the function given in Lemma \ref{BR-lem-2.6}. Let $T>2R^2$. Then
$$\tilde h(x,t):=h(x)+\left(\frac{T-t}{R^2}\right)^2$$
is a super-solution of the heat equation on $A_R(y_0)\times (T-R^2, T)$, where $A_R(y_0):=B_{R}(y_0)\setminus\overline B_{R/2}(y_0)$.
	\end{lemma}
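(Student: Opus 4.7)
The plan is a direct verification from the definition, splitting $\tilde h$ into its spatial and temporal parts. Write $g(t):=\bigl(\tfrac{T-t}{R^2}\bigr)^2$, so $\tilde h(x,t)=h(x)+g(t)$. Since $h\in Lip_0(X)$ and $g\in C^\infty$, the regularity requirement $\tilde h\in L^2_{\rm loc}\bigl((T-R^2,T);W^{1,2}_{\rm loc}(A_R(y_0))\bigr)$ is immediate. Fix an arbitrary open set $U\Subset A_R(y_0)\times(T-R^2,T)$ and a non-negative $\phi\in Lip(\Omega_T)$ with ${\rm supp}(\phi)\subset U$; we must check
\begin{equation*}
\int_U \tilde h\,\partial_t\phi\,{\rm d}\meas\,{\rm d}t \;\le\;\int_U\langle\nabla\tilde h,\nabla\phi\rangle\,{\rm d}\meas\,{\rm d}t.
\end{equation*}

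For the left-hand side, split $\tilde h=h(x)+g(t)$. By Fubini, $\int_U h(x)\partial_t\phi\,{\rm d}\meas\,{\rm d}t=0$, since for each fixed $x$ the function $\phi(x,\cdot)$ has compact support in $(T-R^2,T)$. For the $g(t)$-contribution, integration by parts in $t$ (again permitted by the compact support of $\phi$ in time) gives
\begin{equation*}
\int_U g(t)\,\partial_t\phi\,{\rm d}\meas\,{\rm d}t \;=\; -\int_U g'(t)\,\phi\,{\rm d}\meas\,{\rm d}t \;=\; \int_U\frac{2(T-t)}{R^4}\,\phi\,{\rm d}\meas\,{\rm d}t.
\end{equation*}
For the right-hand side, $\nabla\tilde h=\nabla h$, so I only need a lower bound on $\int_U\langle\nabla h,\nabla\phi\rangle\,{\rm d}\meas\,{\rm d}t$. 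For each fixed $t$, the function $\phi(\cdot,t)$ is non-negative, Lipschitz, and has compact support in $A_R(y_0)$. Applying the distributional inequality ${\bf\Delta}h\le-\tfrac{2}{R^2}\meas$ on $A_R(y_0)$ from Lemma \ref{BR-lem-2.6}(iii) to the test function $\phi(\cdot,t)$, and recalling that $\int h\,\Delta\psi\,{\rm d}\meas=-\int\langle\nabla h,\nabla\psi\rangle\,{\rm d}\meas$ for such $\psi$ (using $h\in W^{1,2}(A_R(y_0))$ as $h$ is Lipschitz), yields
\begin{equation*}
\int_{A_R(y_0)}\langle\nabla h,\nabla\phi(\cdot,t)\rangle\,{\rm d}\meas \;\ge\;\frac{2}{R^2}\int_{A_R(y_0)}\phi(\cdot,t)\,{\rm d}\meas.
\end{equation*}
A standard approximation-and-density argument extends the distributional inequality from test functions in ${\rm Test}_c^\infty$ to non-negative compactly supported Lipschitz functions, if needed.

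Integrating in $t\in(T-R^2,T)$ and combining the two sides, it suffices to check
\begin{equation*}
\int_U\frac{2(T-t)}{R^4}\,\phi\,{\rm d}\meas\,{\rm d}t \;\le\; \frac{2}{R^2}\int_U \phi\,{\rm d}\meas\,{\rm d}t,
\end{equation*}
which is immediate from the pointwise inequality $\tfrac{2(T-t)}{R^4}\le\tfrac{2}{R^2}$ valid on the time range $t\in(T-R^2,T)$. This establishes the super-solution inequality. No step here is delicate; the only point that requires any care is the justification of the integration-by-parts identity $\int h\,\Delta\phi\,{\rm d}\meas=-\int\langle\nabla h,\nabla\phi\rangle\,{\rm d}\meas$ for $\phi$ compactly supported in $A_R(y_0)$ and the associated passage from ${\rm Test}_c^\infty$ test functions to Lipschitz test functions in applying Lemma \ref{BR-lem-2.6}(iii), which is a routine density argument.
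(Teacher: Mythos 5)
Your proof is correct and rests on the same two ingredients as the paper's (the distributional bound ${\bf\Delta}h\le-\tfrac{2}{R^2}\meas$ from Lemma~\ref{BR-lem-2.6}(iii) and the time-derivative bound $\partial_t\tilde h\ge-\tfrac{2}{R^2}$ on $(T-R^2,T)$). The only difference is presentational: the paper combines these into the pointwise-in-time statement ${\bf\Delta}\tilde h\le\partial_t\tilde h\cdot\meas$ and then invokes \cite[Lemma 6.12]{ZZ18} or \cite[Lemma 6.2]{MS22+} to conclude that this implies the super-solution property, whereas you unpack the weak formulation of Definition~\ref{BR-def-3.1} directly (split $\tilde h=h+g$, use Fubini and integration by parts in $t$, test the distributional Laplacian inequality against $\phi(\cdot,t)$ slice-by-slice). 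Your version is a bit more self-contained since it avoids the external citation, at the cost of spelling out a routine computation; both are valid.
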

\begin{proof}
	By Lemma \ref{BR-lem-2.6}, we have that $\tilde h\in W^{1,2}\big(A_R(y_0)\times (0,T)\big)$ and that for each $t>0$,
	$${\bf \Delta}\tilde h(x,t)= {\bf \Delta}h\ls -\frac{2}{R^2}\cdot \meas
	$$ on $A_R(y_0)$ in the sense of distributions. For each $t\in(T-R^2,T)$,
	$$\frac{\partial}{\partial t}\tilde h(x,t)= \frac{2(t-T)}{R^4}\gs \frac{2(T-R^2-T)}{R^4}=-\frac{2}{R^2}.$$
	It follows  for each $t\in(T-R^2,T)$  that  ${\bf \Delta}\tilde h\ls\frac{\partial}{\partial t}\tilde h\cdot \meas$ in the sense of distributions. Therefore, it is a super-solution of the heat equation (see, for example,  \cite[Lemma 6.12]{ZZ18} or \cite[Lemma 6.2]{MS22+}).
	 	\end{proof}

By the comparison principle and the barriers above, we can get the following $L^\infty$ self-improvement property for the solutions to the heat equation near the boundary.

\begin{theorem}
	\label{BR-thm-3.4}
	Let $v(x,t)\in  L^2\big((0,T_0); W^{1,2}(\Omega)\big)$ be a sub-solution of heat equation. Suppose that $v\in C(\Omega\times(0,T_0))$ and that $v(\cdot,t)\in W^{1,2}_0(\Omega)$ for almost every $0<t<T_0$. Let $x_0\in \partial \Omega$ and suppose that $\Omega$ satisfies the exterior ball condition at $x_0$ with radius $R_{\rm ext}$.  Let $0<R<\min\{R_{\rm ext},{\rm diam}(\Omega)\}$ and $2R^2\ls T<T_0$.
		Assume that
	$$v(x,t)\ls a\quad {\rm on}\quad (B_{2R}(x_0)\cap \Omega)\times (T-R^2,T)$$ for some $a>0$. Then for any $r\in(0,R/2)$ it holds
	$$v(x, T)\ls \frac{c_1\cdot a r}{R}  \quad {\rm on}\quad B_r(x_0)\cap \Omega,$$
	where $c_1$ depends only on $K$ and $N$.
	\end{theorem}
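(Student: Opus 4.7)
\emph{Proof plan.}
The plan is to apply the parabolic comparison principle (Lemma~\ref{BR-lem-3.2}) to $v$ and an explicit super-solution built from Lemmas~\ref{BR-lem-2.6}--\ref{BR-lem-3.3}. Since $R < R_{\rm ext}$, I first shrink the given exterior ball at $x_0$ to radius $R/2$: choose $y_1'$ on a minimizing geodesic from the given exterior ball center $y_0$ to $x_0$ with $d(y_1', x_0) = R/2$. The triangle inequality gives $\overline{B_{R/2}(y_1')} \subset \overline{B_{R_{\rm ext}}(y_0)}$, hence $\overline{B_{R/2}(y_1')} \cap \Omega = \emptyset$. This precise scale matching is what will force the barrier to vanish exactly at $x_0$.

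Apply Lemma~\ref{BR-lem-2.6} with center $y_1'$ and radii $R$ and $R_1 := 2R$ to obtain a function $h \in Lip_0(X)$. Setting $\rho(x) := d(y_1', x)$ and using $\rho(x_0) = R/2$, the function $h_1 = 2^l - R^l \rho^{-l}$ used in the construction satisfies $h_1(x_0) = 2^l - (R/(R/2))^l = 0$, so $h(x_0) = 0$. On the annulus $A_R(y_1') := B_R(y_1') \setminus \overline{B_{R/2}(y_1')}$ one has $|\nabla h| \ls C_1/R$ and $\mathbf{\Delta} h \ls -2 R^{-2} \meas$, while $h \gs 2^l - 1 \gs 1$ on the outer sphere $\partial B_R(y_1')$. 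By Lemma~\ref{BR-lem-3.3}, the space-time function $\tilde h(x,t) := h(x) + ((T - t)/R^2)^2$ is a super-solution of the heat equation on $A_R(y_1') \times (T - R^2, T)$.

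Let $U := B_R(y_1') \cap \Omega$. Since $\overline{B_{R/2}(y_1')} \cap \Omega = \emptyset$, we have $U \subset A_R(y_1')$, so $\tilde h$ remains a super-solution on $U \times (T - R^2, T)$. The parabolic boundary of this cylinder consists of the initial slice, $\partial B_R(y_1') \cap \overline \Omega$, and $\overline{B_R(y_1')} \cap \partial\Omega$. On the initial slice, $\tilde h \gs h + 1 \gs 1$, so $a\tilde h \gs a \gs v$. On $\partial B_R(y_1') \cap \overline \Omega$, $h \gs 1$ and $\partial B_R(y_1') \subset B_{3R/2}(x_0) \subset B_{2R}(x_0)$, so the hypothesis together with the continuity of $v$ on $\Omega$ gives $v \ls a \ls a \tilde h$. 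On $\overline{B_R(y_1')} \cap \partial\Omega$ the Dirichlet condition $v(\cdot,t) \in W^{1,2}_0(\Omega)$ yields $v = 0$ $2$-q.e. Upgrading these pointwise statements to the Sobolev condition $(v - a \tilde h)_+ \in W^{1,2}_0(U)$ for a.e. $t$ (using the continuity of $v$ on $\Omega$ together with Lemma~\ref{BR-lem-2.3}), Lemma~\ref{BR-lem-3.2} gives $v \ls a \tilde h$ on $U \times (T - R^2, T)$.

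For $x \in B_r(x_0) \cap \Omega$ with $r < R/2$ one has $x \in U$ and $\tilde h(x, T) = h(x)$. Using $\rho(x) - R/2 \ls r$ and the Bernoulli inequality $(1 + s)^{-l} \gs 1 - ls$ applied to the explicit formula $h(x) \ls h_1(x) = 2^l(1 - (1 + 2(\rho(x) - R/2)/R)^{-l})$, one obtains $h(x) \ls 2^{l+1} l \cdot r/R$, and therefore $v(x, T) \ls c_1 \cdot a \cdot r/R$ with $c_1$ depending only on $K$ and $N$ through $l$. I expect the main technical hurdle to be the rigorous promotion of the pointwise inequality on the interior-of-$\Omega$ piece $\partial B_R(y_1') \cap \Omega$ of $\partial U$ to the Sobolev lateral boundary condition required by Lemma~\ref{BR-lem-3.2}, which combines the continuity of $v$ on $\Omega$ with the zero-extension characterization of $W^{1,2}_0$ provided by Lemma~\ref{BR-lem-2.3}.
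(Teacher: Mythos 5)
Your plan takes essentially the same route as the paper: produce an exterior ball of radius $R/2$ touching $x_0$ (your $y_1'$ is the paper's $y_0$), build the barrier $\tilde h$ from Lemmas \ref{BR-lem-2.6} and \ref{BR-lem-3.3}, apply the parabolic comparison principle (Lemma \ref{BR-lem-3.2}) on $U\times(T-R^2,T)$ with $U:=B_R(y_1')\cap\Omega$, and finally convert $h(x)\ls c\,r/R$ into the claimed bound; your Bernoulli estimate and the paper's Lipschitz bound $|\nabla h|\ls C_1/R$ together with $h(x_0)=0$ give the same conclusion. But there is a real gap precisely where you flag one: you do not actually establish the lateral boundary condition $(v-a\tilde h)_+\in W^{1,2}_0(U)$, and your choice of $R_1$ makes this harder than it needs to be.

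Specifically, you take $R_1=2R$ in Lemma \ref{BR-lem-2.6}, whereas the paper takes $R_1=2\,\mathrm{diam}(\Omega)$. The paper's choice forces $h\gs1$ on \emph{all} of $\Omega\setminus B_R(y_0)$, not merely on the annulus $B_{2R}\setminus B_R$. This matters because the clean way to get the Sobolev boundary condition is to set $w:=\big(\min\{\bar v,a\}-a\tilde h\big)_+$, where $\bar v$ is the zero extension of $v(\cdot,t)$ (an element of $W^{1,2}(X)$ by Lemma \ref{BR-lem-2.3}(iii)), and observe that $w\equiv0$ on $X\setminus U$: on $X\setminus\Omega$ because $\bar v=0$ and $\tilde h\gs0$, and on $\Omega\setminus B_R(y_0)$ because $a\tilde h\gs ah\gs a\gs\min\{\bar v,a\}$. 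Since on $U$ one has $\min\{\bar v,a\}=v$ (from the hypothesis $v\ls a$ there), Lemma \ref{BR-lem-2.3}(i) gives $(v-a\tilde h)_+\in W^{1,2}_0(U)$ immediately. With your $R_1=2R$, the cutoff $\eta$ in the construction of $h$ may vanish outside a neighborhood of $B_{2R}(y_1')$, so $h$ is not guaranteed to stay $\gs1$ on $\Omega\setminus B_{2R}(y_1')$; the function $w$ above then need not vanish outside $U$, and the reduction to Lemma \ref{BR-lem-2.3} breaks. You could repair your version by inserting an additional cutoff $\zeta\in Lip_0(B_{2R}(y_1'))$ with $\zeta\equiv1$ on $B_R(y_1')$ and working with $\big(\zeta\min\{\bar v,a\}-a\tilde h\big)_+$, but it is simpler and cleaner to take $R_1=2\,\mathrm{diam}(\Omega)$ as in the paper, which makes the ``upgrade to Sobolev'' you identified as the main hurdle entirely automatic.
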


\begin{proof}
The proof is similar to the argument in \cite[Section 1]{Hui92}. In \cite{Hui92}, the barrier function is given via the boundary Schauder's estimate, which does not work for the $RCD$ setting. Here, we will use the barrier function $\tilde h$ given in the above Lemma \ref{BR-lem-3.3}.

	Let $y_0\in X\setminus\Omega$ be a point such that  $d(y_0,x_0)=d(y_0,\Omega)= R/2$. Denote by $U:=A_R(y_0)\cap \Omega.$ Then $U=B_R(y_0)\cap\Omega,$ since $\Omega\cap \overline{B_{R/2}(y_0)}=\emptyset.$ Remark that $U\subset B_{2R}(x_0)\cap \Omega.$ Let $\tilde h$ be the function given in Lemma \ref{BR-lem-3.3} with $R_1=2\cdot{\rm diam}(\Omega)$.
	
	We first claim that $v\ls a\tilde h$ near the parabolic boundary of $U\times (T-R^2,T)$ in the sense of Lemma \ref{BR-lem-3.2}.
	
	 Let $t\in (T-R^2,T)$ such that $v(\cdot,t)\in W^{1,2}(\Omega)$. We can consider its zero extension $\bar v$. Then, by Lemma \ref{BR-lem-2.3}(iii), $\bar v\in W^{1,2}(X)$. We define
	$$w:=\big(\min\{\bar v,a\}- a\tilde h\big)_+.$$
	It is clear that $w\in W^{1,2}_{\rm loc}(X)$. Noticing that $w=0 $ in $X\setminus\Omega$, (since $\bar v=0$ in $X\setminus\Omega$ and $\tilde h\gs0$), we conclude that $w\in W^{1.2}(X)$. 	Since $\tilde h(x,t)\gs h(x)\gs 1$ on $B_{R_1}(y_0)\setminus  B_R(y_0)\supset \Omega\setminus B_R(y_0)$, we get that $w=0$ on  $\Omega\setminus B_R(y_0)$.
	 Therefore, $$w=0 \quad {\rm on}\quad   (X\setminus \Omega)\cup(\Omega\setminus B_R(y_0))=X\setminus U.$$
	   The assumption $v\ls a$ on $U\times (T-R^2,T)$ implies that  $\min\{\bar v(\cdot, t),a\}=v(\cdot,t)$ on $U$ for each $t\in (T-R^2,T),$ and hence  $w=(v-a\tilde h)_+$ on $U$ for each $t\in(T-R^2,T)$. By Lemma \ref{BR-lem-2.3}(i), we conclude that lateral boundary condition $(v-a\tilde h)_+\in W^{1,2}_0(U).$
	
Considering the initial condition,  for any small $\varepsilon>0$, we have
$$\tilde h(x,t)\gs h(x)+(\frac{-R^2+\varepsilon}{R^2})^2\gs (1-\varepsilon/R^2)^2$$
 on $U\times(T-R^2,T-R^2+\varepsilon)$. By the assumption $v\ls a $ on $U\times (T-R^2,T)$, we get
	$$(v-a\tilde h)_+\ls a \left(1-(1-\varepsilon/R^2)^2\right)$$
	on $ U\times(T-R^2,T-R^2+\varepsilon)$, which implies the initial condition
	$$\frac 1 \varepsilon\int_{T-R^2}^{T-R^2+\varepsilon}\int_U(v-a\tilde h)_+^2{\rm d}\meas{\rm d}t\to 0\quad {\rm as}\  \ \varepsilon\to0.$$  Now we finish the proof of the claim that $v\ls a\tilde h$ near the parabolic boundary of $U\times (T-R^2,T)$ in the sense of Lemma \ref{BR-lem-3.2}.
	
	By Lemma \ref{BR-lem-3.2}, we conclude that $v\ls a\tilde h$ almost all in $U\times (T-R^2,T)$. Since both $v$ and $\tilde h$ are continuous on $U\times(0,T_0)$, we have
	$$v(x,T)\ls a\tilde h(x, T) =ah(x)\quad {\rm on} \ \ U.$$
Recalling $|\nabla h|\ls c_1/R$ on $U\subset A_R(y_0)$, $h(x_0)=0$, and noticing   $B_r(x_0)\cap \Omega\subset U$ for $r<R/2,$ this finishes the proof.
\end{proof}

Now we give the upper bounds of the Dirichlet heat kernel near the boundary, which is the main result in this subsection.

\begin{theorem}\label{BR-thm-3.5}	
Let $\Omega$ be bounded and satisfy a uniform exterior ball condition with radius $R_{\rm ext}\in (0,1)$. Then $p^\Omega_t(x,y)$ satisfies the following upper bounds: for any $x,y\in \Omega$ and any $\widehat T\gs R^2_{\rm ext}$ it holds
\begin{equation}
	\label{BR-equ-3.1}
	p^\Omega_T(x,y)\ls c\frac{\delta(x)}{\sqrt{T}}\cdot\frac{ e^{c_2  T}\cdot {\sqrt{\widehat T}}/R_{\rm ext} }{\meas(B_{\sqrt T}(y))} \exp\left(-\frac{d^2(x,y)}{c_1 T}\cdot \frac{\widehat T}{R^2_{\rm ext}} \right)
\end{equation}
and
\begin{equation}
	\label{BR-equ-3.2}
	p^\Omega_T(x,y)\ls c\frac{\delta(x)\delta(y)}{T}\cdot\frac{  e^{c_2  T}\cdot {\widehat T}/R^2_{\rm ext}}{\meas(B_{\sqrt T}(y))} \exp\left(-\frac{d^2(x,y)}{c_1 T}\cdot \frac{\widehat T}{R^2_{\rm ext}} \right)
\end{equation}
for all $T\in(0,\widehat T)$, where $\delta(x):=d(x,\partial \Omega),$ and the constants $c,c_1, c_2$ depend only on $N$ and $K$.

\end{theorem}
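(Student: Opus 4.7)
The plan is to obtain the two estimates by combining the standard Gaussian upper bound \eqref{BR-equ-2.8} with the boundary $L^\infty$ self-improvement of Theorem \ref{BR-thm-3.4}, and then to derive \eqref{BR-equ-3.2} from \eqref{BR-equ-3.1} via the semigroup identity. For \eqref{BR-equ-3.1}, I fix $y\in\Omega$ and consider $v(x,t):=p^\Omega_t(x,y)$. By the spectral expansion \eqref{BR-equ-2.6} together with the $L^2$-contraction \eqref{BR-equ-2.7} and the energy identity for the Dirichlet semigroup, $v\in L^2((t_0,T_0);W^{1,2}_0(\Omega))$ for every $t_0>0$; continuity of $v$ on $\Omega\times(0,\infty)$ is a consequence of standard parabolic regularity on $RCD$-spaces, so the hypotheses of Theorem \ref{BR-thm-3.4} are met. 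For $T\in(0,\widehat T)$, I choose the scale $R:=\min\{\sqrt{T/3},R_{\rm ext}\}$, which guarantees simultaneously $R\ls R_{\rm ext}$ and $2R^2\ls T$ as required.

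For $x\in\Omega$ with $\delta(x)\ls R/2$, pick $x_0\in\partial\Omega$ realizing $\delta(x)$ and set
\begin{equation*}
a:=\sup\bigl\{v(z,s):\ (z,s)\in(B_{2R}(x_0)\cap\Omega)\times(T-R^2,T)\bigr\}.
\end{equation*}
To bound $a$, apply \eqref{BR-equ-2.8}: since $R^2\ls T/3$, every such $s$ is comparable to $T$, so $\sqrt{s}\sim\sqrt T$. A triangle-inequality split according to whether $d(x,y)\gs 5R$ (in which case $d(z,y)\gs d(x,y)/2$) or $d(x,y)<5R$ (in which case the Gaussian factor is bounded below by a positive constant) produces the exponential $\exp(-cd^2(x,y)/T)$. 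The volume doubling \eqref{BR-equ-2.4} converts $\meas(B_{\sqrt s}(z))^{-1}$ into $\meas(B_{\sqrt T}(y))^{-1}$, the polynomial factor $(1+d(z,y)/\sqrt T)^N$ being absorbed into the Gaussian by a mild enlargement of $c$. Theorem \ref{BR-thm-3.4} applied with $r=\delta(x)$ then gives $v(x,T)\ls c_1 a\,\delta(x)/R$, and the ratio $\delta(x)/R$ is controlled by $\delta(x)/\sqrt T$ up to the factor $\sqrt{\widehat T}/R_{\rm ext}$, which appears exclusively in the regime $T\gs 3R_{\rm ext}^2$ where $R=R_{\rm ext}$. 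For $\delta(x)>R/2$, estimate \eqref{BR-equ-3.1} reduces directly to \eqref{BR-equ-2.8} after absorbing the bounded factor $\sqrt T/R$.

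For \eqref{BR-equ-3.2}, I use the semigroup identity
\begin{equation*}
p^\Omega_T(x,y)=\int_\Omega p^\Omega_{T/2}(x,z)\,p^\Omega_{T/2}(z,y)\,\mathrm{d}\meas(z),
\end{equation*}
apply \eqref{BR-equ-3.1} to $p^\Omega_{T/2}(x,z)$ directly and, via the symmetry $p^\Omega_{T/2}(z,y)=p^\Omega_{T/2}(y,z)$, to the second factor as well. This extracts the prefactor $\delta(x)\delta(y)/(T/2)$ and leaves a heat-kernel-type convolution $\int\meas(B_{\sqrt{T/2}}(z))^{-2}\exp(-c(d^2(x,z)+d^2(y,z))/T)\,\mathrm{d}\meas(z)$. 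Splitting the integral according to whether $d(x,z)\gs d(x,y)/2$ or $d(y,z)\gs d(x,y)/2$, and using doubling \eqref{BR-equ-2.4} together with the standard Gaussian integrability $\int\meas(B_{\sqrt T}(w))^{-1}\exp(-cd^2(w,\cdot)/T)\,\mathrm{d}\meas\ls C$, yields \eqref{BR-equ-3.2}.

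The main obstacle is the forced dichotomy in the choice of $R$: once $T$ exceeds $3R_{\rm ext}^2$, the exterior-ball scale $R_{\rm ext}$ is strictly smaller than the natural heat-diffusion scale $\sqrt T$, so Theorem \ref{BR-thm-3.4} produces the suboptimal factor $\delta(x)/R_{\rm ext}$ in place of the scale-invariant $\delta(x)/\sqrt T$, and bridging this gap is precisely what produces the explicit prefactor $\sqrt{\widehat T}/R_{\rm ext}$ in \eqref{BR-equ-3.1}. The secondary, more routine, technical point is maintaining the Gaussian form of the exponent through the change of base point from $z$ to $y$ in the volume factor (and in the semigroup step, from $z$ to both $x$ and $y$), without losing any of the decay; this is handled by absorbing all polynomial factors into the exponential by adjusting the constant in the denominator of $d^2(x,y)$.
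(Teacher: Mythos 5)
Your argument for \eqref{BR-equ-3.1} has the same skeleton as the paper's: bound $v(\cdot,t)=p^\Omega_t(\cdot,y)$ on $(B_{2R}(x_0)\cap\Omega)\times(T-R^2,T)$ via \eqref{BR-equ-2.8} plus doubling and a near/far dichotomy in $d(x,y)$, then invoke Theorem \ref{BR-thm-3.4}. The genuine difference is the choice of $R$. You set $R=\min\{\sqrt{T/3},R_{\rm ext}\}$, whereas the paper takes $R:=\frac{\sqrt T}{2a}$ with $a:=\sqrt{\widehat T}/R_{\rm ext}$, so that $R\ls R_{\rm ext}/2$, $T=4a^2R^2$, and the single substitution $\delta(x)/R=2a\,\delta(x)/\sqrt T$ produces the prefactor $\sqrt{\widehat T}/R_{\rm ext}$ uniformly over $T\in(0,\widehat T)$, with no case split. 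Your two-regime choice recovers the prefactor correctly (since $\sqrt{\widehat T}/R_{\rm ext}\gs 1$ when $T<3R_{\rm ext}^2$, and $\sqrt T/R_{\rm ext}\ls\sqrt{\widehat T}/R_{\rm ext}$ when $T\gs 3R_{\rm ext}^2$), but you openly produce only the parabolic exponential $\exp(-cd^2(x,y)/T)$. This does not literally match \eqref{BR-equ-3.1}, which carries the extra factor $\widehat T/R_{\rm ext}^2\gs1$ inside the exponent. Strictly speaking that is a gap in your argument relative to the stated theorem. It is worth flagging, however, that the enhanced exponent in \eqref{BR-equ-3.1} cannot hold with $c,c_1,c_2$ depending only on $N,K$: for fixed $T$, $x\neq y$, sending $\widehat T\to\infty$ would force $p^\Omega_T(x,y)=0$. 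Tracing the paper's proof shows that the constant $c$ appearing inside $\exp(-d^2/(cR^2))$ actually absorbs a factor $\sim a^2$, so after substituting $R^2=T/(4a^2)$ the exponent reverts to $\exp(-d^2/(cT))$ with $c$ independent of $a$; the enhanced form in the printed statement appears to be a slip in constant tracking, and the estimate you actually prove is the one the proof delivers.

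For \eqref{BR-equ-3.2} your route is genuinely different. The paper simply reruns the boundary-improvement machinery a second time in the $y$-variable, replacing \eqref{BR-equ-2.8} by \eqref{BR-equ-3.1} as the a priori bound in Theorem \ref{BR-thm-3.4}. You instead use the Chapman--Kolmogorov identity $p^\Omega_T(x,y)=\int_\Omega p^\Omega_{T/2}(x,z)p^\Omega_{T/2}(z,y)\,{\rm d}\meas(z)$, feed \eqref{BR-equ-3.1} into both factors (via symmetry), and estimate a Gaussian-type convolution. That can be made to work, but it is more delicate than you indicate: the integrand has $\meas(B_{\sqrt{T/2}}(z))^{-2}$ rather than $\meas(\cdot)^{-1}$, so you must first trade one power of the volume at $z$ for a volume at a fixed base point plus a polynomial factor, absorb that into the Gaussian, and then relocate from $x$ to $y$; moreover on $RCD(K,N)$ with $K<0$ the Gaussian integral you quote is only $\ls Ce^{c'T}$, not $\ls C$, so the $e^{c'T}$ must be folded into the $e^{c_2 T}$. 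The paper's iteration is therefore more economical since it reuses the established mechanism rather than introducing a second machinery, but your semigroup approach is a legitimate, slightly more elementary alternative once these convolution bookkeeping steps are supplied.
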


\begin{proof}
We first know that $p^\Omega_t(x,y)\in C(\Omega\times(0,+\infty))$ and for each $t>0$ that $p^\Omega_t(x,\cdot)\in W^{1,2}_0(\Omega).$

Fix any $y\in\Omega$ and  $T<\widehat T$. Let $a= \frac{\sqrt{\widehat T}}{R_{\rm ext}}$ and $R:= \frac{\sqrt T }{2a}\ls \frac{R_{\rm ext}}{2}.$
 We can assume that $\delta(x)<R/4$, otherwise we have done by (\ref{BR-equ-2.8}).   Let $x_0 \in\partial\Omega$ be the point such that $d(x,x_0)=\delta(x).$

  From (\ref{BR-equ-2.8}),
$$p^\Omega_t(z,y)\ls \frac{c }{\meas(B_{\sqrt{t}}(y))}\exp\left(-\frac{d^2(z,y)}{5t}+ct\right), \quad \forall z\in \Omega.$$
Since $T\gs 2a^2R^2\gs 2R^2$, it holds $T/2\ls t\ls T$ for any $t\in (T-R^2,T)$. Hence, by the doubling property of $\meas$ (\ref{BR-equ-2.4}), we  have
$$p^\Omega_t(z,y)\ls \frac{c\cdot e^{cT}}{\meas(B_{\sqrt{T}}(y))}\exp\left(-\frac{d^2(z,y)}{cR^2}\right), \qquad \forall (z,t)\in\Omega\times(T-R^2,T).$$
Now we consider two cases: $d(x,y)\gs 8R$ or $d(x,y)<8R$.
In the case when $d(x,y)\gs 8R$, it holds for any $z\in B_{2R}(x_0)$ that
$$d(z,y)\gs d(x,y)-d(x,z)\gs d(x,y)-3R\gs \frac{d(x,y)}{2},$$
since $d(x,x_0)=\delta(x)<R$. Therefore, we have
$$p^\Omega_t(z,y)\ls \frac{c\cdot e^{cT}}{\meas(B_{\sqrt{T}}(y))}\exp\left(-\frac{d^2(x,y)}{cR^2}\right), \qquad \forall (z,t)\in B_{2R}(x_0)\times(T-R^2,T).$$
In the case when $d(x,y)\ls 8R$, we have $\exp(-\frac{d^2(x,y)}{R^2})\gs \exp(-1/8^2)$, and hence
$$p^\Omega_t(z,y)\ls \frac{c\cdot e^{cT}}{\meas(B_{\sqrt{T}}(y))}\ls\frac{c_1\cdot e^{cT}}{\meas(B_{\sqrt{T}}(y))}\exp\left(-\frac{d^2(x,y)}{cR^2}\right)$$
for any $ (z,t)\in \Omega\times(T-R^2,T).$ Therefore, in both cases, we have
$$p^\Omega_t(z,y)\ls \frac{c\cdot e^{cT}}{\meas(B_{\sqrt{T}}(y))}\exp\left(-\frac{d^2(x,y)}{cR^2}\right), \qquad \forall (z,t)\in B_{2R}(x_0)\times(T-R^2,T).$$

By applying Theorem \ref{BR-thm-3.4} to  $v(\cdot, t):=p^\Omega_t(\cdot, y)$ on $B_{2R}(x_0)\times (T-R^2, T)$ with $r=\delta(x)< R/2$, we conclude that
$$p^\Omega_T(x,y)\ls c_1\frac{\delta(x)}{R}\frac{c\cdot e^{cT}}{\meas(B_{\sqrt{T}}(y))}\exp\left(-\frac{d^2(x,y)}{cR^2}\right).$$
This implies (\ref{BR-equ-3.1}), since $T=4a^2R^2$ and $a^2=\frac{\widehat T}{R^2_{\rm ext}}$.

The proof of \eqref{BR-equ-3.2} is similar to the proof of (\ref{BR-equ-3.1}), by considering the function $p^\Omega_t(x,\cdot)$ near $y$ and replacing (\ref{BR-equ-2.8})  by (\ref{BR-equ-3.1}).
\end{proof}

\begin{proof}[Proof of Theorem \ref{BR-thm-1.10}]
	By replacing $T=t$ and $\widehat  T=T$ in Theorem \ref{BR-thm-3.5}, one can get  Theorem \ref{BR-thm-1.10}.
 \end{proof}

\subsection{Boundary behaviors of Dirichlet Green functions}

The Dirichlet Green functions on metric measure spaces have been studied widely (see, for example, \cite{BM95, G-H14, CGL21}).

From the positivity of heat semi-group $\{H^\Omega_t\}$,  the local Dirichlet heat kernel $p^\Omega_t(x,y)$, in \eqref{BR-equ-2.6}, is nonnegative for any $(x,y,t)\in\Omega\times\Omega\times(0,+\infty).$
Define the local Green function with the Dirichlet boundary $G^\Omega:\Omega\times\Omega\to [0,\infty]$ as
\begin{equation}
	\label{BR-equ-3.3}
	G^\Omega(x,y):=\int_0^\infty p_t^\Omega(x,y){\rm d}t,\qquad \forall (x,y)\in\Omega\times\Omega.
\end{equation}

The Green function is the fundamental solution of the Laplace equation in the following sense.
\begin{lemma}\label{BR-lem-3.6}
 For any $f\in L^2(\Omega)$, we have
$$G[f](x):=\int_\Omega  G^\Omega(x,y)f(y){\rm d}\meas(y)\in D(\Delta_\Omega)$$
 and
$\Delta_\Omega G[f]=-f.$
 	\end{lemma}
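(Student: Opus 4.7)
The plan is to represent $G[f]$ as a Bochner integral of the Dirichlet heat flow over $[0,\infty)$ and then exploit the fundamental theorem of calculus for the semigroup, using the spectral gap $\lambda_1^\Omega>0$ for integrability at infinity and strong continuity at $t=0$. By Fubini (which is legitimate since $p_t^\Omega\geq 0$), I would first write
\begin{equation*}
G[f](x)=\int_0^\infty\!\!\int_\Omega p_t^\Omega(x,y)f(y)\,{\rm d}\meas(y)\,{\rm d}t=\int_0^\infty H_t^\Omega f(x)\,{\rm d}t.
\end{equation*}
The $L^2$-contraction \eqref{BR-equ-2.7} gives $\|H_t^\Omega f\|_{L^2}\leq e^{-\lambda_1^\Omega t}\|f\|_{L^2}$, so the integral converges absolutely in $L^2(\Omega)$ and $G[f]\in L^2(\Omega)$.

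Next, for $0<\varepsilon<M<\infty$ I would introduce the truncation $G_{\varepsilon,M}[f]:=\int_\varepsilon^M H_t^\Omega f\,{\rm d}t$. The spectral expansion \eqref{BR-equ-2.6} yields $\Delta_\Omega H_t^\Omega f=\frac{d}{dt}H_t^\Omega f$ for each $t>0$, and the elementary inequality $\lambda e^{-\lambda t}\leq 1/(et)$ applied eigenvalue-by-eigenvalue gives
\begin{equation*}
\|\Delta_\Omega H_t^\Omega f\|_{L^2}\leq \frac{1}{et}\|f\|_{L^2},
\end{equation*}
so the map $t\mapsto \Delta_\Omega H_t^\Omega f$ is continuous and integrable in $L^2$ on $[\varepsilon,M]$. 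Because $\Delta_\Omega$ is a closed (in fact self-adjoint) operator, this Bochner integral lies in its domain and
\begin{equation*}
\Delta_\Omega G_{\varepsilon,M}[f]=\int_\varepsilon^M \frac{d}{dt}H_t^\Omega f\,{\rm d}t=H_M^\Omega f-H_\varepsilon^\Omega f.
\end{equation*}

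Finally, I let $\varepsilon\to 0^+$ and $M\to\infty$. The convergence $G_{\varepsilon,M}[f]\to G[f]$ in $L^2$ follows at once from $\|H_t^\Omega f\|_{L^2}\leq e^{-\lambda_1^\Omega t}\|f\|_{L^2}$. On the right-hand side, $\|H_M^\Omega f\|_{L^2}\to 0$ by \eqref{BR-equ-2.7}, while $H_\varepsilon^\Omega f\to f$ in $L^2$ by the strong continuity of the heat semigroup at $t=0$. Applying the closedness of $\Delta_\Omega$ a second time, I conclude $G[f]\in D(\Delta_\Omega)$ and $\Delta_\Omega G[f]=-f$. The only technical input beyond standard semigroup machinery is the bound $\|\Delta_\Omega H_t^\Omega f\|_{L^2}\lesssim t^{-1}\|f\|_{L^2}$, which is where an obstacle might arise if one worked abstractly, but here it is immediate from the eigen-expansion \eqref{BR-equ-2.6}; hence no serious difficulty is expected.
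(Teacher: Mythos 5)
Your proof is correct, but it takes a genuinely different route from the paper's. The paper proves $G[f]\in D(\Delta_\Omega)$ by computing the difference quotient directly: using the semigroup property one gets
\begin{equation*}
H_s^\Omega(G[f])-G[f]=-\int_0^sH_t^\Omega f\,{\rm d}t,
\end{equation*}
so $\frac{1}{s}\bigl(H_s^\Omega(G[f])-G[f]\bigr)\to -f$ in $L^2$ as $s\to0^+$ by strong continuity, which is exactly the definition of the infinitesimal generator. This requires no truncation, no analytic smoothing estimate, and no appeal to closedness of $\Delta_\Omega$. Your argument instead introduces the truncated integrals $G_{\varepsilon,M}[f]$, uses the analytic-semigroup bound $\|\Delta_\Omega H_t^\Omega f\|_{L^2}\leq (et)^{-1}\|f\|_{L^2}$ (correctly derived from the eigen-expansion) to justify that $t\mapsto\Delta_\Omega H_t^\Omega f$ is Bochner integrable on $[\varepsilon,M]$, and then invokes the closedness of $\Delta_\Omega$ twice — once to pass $\Delta_\Omega$ under the Bochner integral and once to pass to the limit $\varepsilon\to0$, $M\to\infty$. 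Both are standard and rigorous; the paper's version is leaner, using only the semigroup identity and strong continuity, while yours is the textbook "analytic semigroup plus closed operator" argument and makes the $O(1/t)$ regularization explicit, which you might find more transparent or more portable to non-self-adjoint settings. Either way the key ingredients ($L^2$-contraction with spectral gap for integrability at $\infty$, strong continuity at $0$) are shared, and the conclusion is identical.
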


\begin{proof} We first show $G[f]\in L^2(\Omega)$. Noticing that
$$G[f](x) =\int_\Omega\int_0^\infty p_t^\Omega(x,y)f(y){\rm d}t{\rm d}\meas(y)=\int_0^\infty H^\Omega_tf(x){\rm d}t,  $$  we have
\begin{equation*}
	\begin{split}
		\int_\Omega (G[f])^2{\rm d}\meas &=\int_\Omega\left( \int_0^\infty H^\Omega_tf(x){\rm d}t\right)^2\meas(x) \\
		&=\int_\Omega\left( \int_0^\infty H^\Omega_tf(x) e^{\lambda_1 t/2}\cdot e^{-\lambda_1t/2} {\rm d}t\right)^2\meas(x) \\
		&\ls\frac{1}{\lambda_1}\int_0^\infty e^{\lambda_1t}\|H^\Omega_tf\|^2_{L^2(\Omega)} {\rm d}t\ls \frac{\|f\|^2_{L^2(\Omega)}}{\lambda_1^2},
		 \end{split}
\end{equation*}
where we have used the H\"older inequality and the $L^2$-contraction (\ref{BR-equ-2.7}) in the last inequality.

Now we show that $G[f]\in D(\Delta_\Omega)$ and $\Delta_\Omega(G[f])=-f.$ In fact, for any $s>0$,
\begin{equation*}
	\begin{split}
		H^\Omega_s(G[f])(x)-G[f](x)&= \int_\Omega p_s(x,y)\int_0^\infty H^\Omega_tf(y){\rm d}t{\rm d}\meas(y)-\int_0^\infty H^\Omega_tf(x){\rm d}t\\
		&=\int_0^\infty  H^\Omega_s\circ H^\Omega_t(f) (x){\rm d}t-\int_0^\infty H^\Omega_tf(x){\rm d}t\\		&=\int_0^\infty  H^\Omega_{s+t} f (x){\rm d}t-\int_0^\infty H^\Omega_tf(x){\rm d}=-\int_0^s H^\Omega_tf(x){\rm d}t.	\end{split}
\end{equation*}
Therefore, we have
$$		\frac{H^\Omega_s(G[f])-G[f]}{s} =-\frac{1}{s}\int_0^s H^\Omega_tf{\rm d}t \to f \quad {\rm in}\ \ L^2(\Omega) $$
 as $s\to0$. It implies $G[f]\in D(\Delta_\Omega)$ and $\Delta_\Omega(G[f])=-f,$ and finishes the proof.
  \end{proof}

\begin{remark} \label{BR-remark-3.7}
	In \cite{BS20}, it was proved that any global Green function on $X$ (if it exists) is in $W^{1,1}(X)$.
	
	(2) It can also  be proved that $G(\cdot, x)\in L^1(\Omega)$ and ${\bf \Delta} G(x,\cdot)=-\delta_x(\cdot)$ in the sense of distrabutions.  We don't use this property in this paper.
\end{remark}

To get the upper bound of $G^\Omega$, we need the following large-time behavior for heat kernels, which is somewhat known to experts.

\begin{lemma}
	\label{BR-lem-3.8}
For each $\widehat T>0$  it holds
\begin{equation}\label{BR-equ-3.4}
p^\Omega_t(x,y)\ls  e^{-\lambda_1^\Omega(t-\widehat T)} \sqrt{p^\Omega_{\widehat T}(x,x)\cdot p^\Omega_{\widehat T}(y,y)},\end{equation}
 for all $(x,y)\in \Omega\times \Omega$ and any $ t\gs \widehat T$.
			\end{lemma}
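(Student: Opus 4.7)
The plan is to prove Lemma \ref{BR-lem-3.8} via the spectral decomposition \eqref{BR-equ-2.6} of $p_t^\Omega$, combined with a Cauchy--Schwarz argument in $\ell^2$ and a direct comparison of eigenvalue weights. This is a standard heat-kernel bound, and the two ingredients we need are already in place: the expansion $p^\Omega_t(x,y) = \sum_{j\geqslant 1} e^{-\lambda_j^\Omega t}\phi_j^\Omega(x)\phi_j^\Omega(y)$ and the fact that $0<\lambda_1^\Omega \leqslant \lambda_j^\Omega$ for every $j$.

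First, I would fix $t\geqslant \widehat T$ and note that for any $s>0$ one has $p^\Omega_s(x,x) = \sum_j e^{-\lambda_j^\Omega s}\phi_j^\Omega(x)^2 < +\infty$, since $p^\Omega_s(x,\cdot)\in L^2(\Omega)$ and the semigroup property yields $\int_\Omega p^\Omega_{s/2}(x,z)^2{\rm d}\meas(z) = p^\Omega_s(x,x)$. In particular, the sequences $a_j(x):=e^{-\lambda_j^\Omega t/2}\phi_j^\Omega(x)$ and $a_j(y):=e^{-\lambda_j^\Omega t/2}\phi_j^\Omega(y)$ lie in $\ell^2(\mathbb N)$. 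By Cauchy--Schwarz,
\begin{equation*}
p^\Omega_t(x,y)=\sum_{j\geqslant 1}a_j(x)a_j(y)\leqslant \left(\sum_{j\geqslant 1}a_j(x)^2\right)^{1/2}\left(\sum_{j\geqslant 1}a_j(y)^2\right)^{1/2}=\sqrt{p^\Omega_t(x,x)\cdot p^\Omega_t(y,y)}.
\end{equation*}

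Next, I would bound the diagonal values in terms of $p^\Omega_{\widehat T}$ via the spectral gap. Writing $t=\widehat T+(t-\widehat T)$ with $t-\widehat T\geqslant 0$ and using $\lambda_j^\Omega \geqslant \lambda_1^\Omega$, so that $e^{-\lambda_j^\Omega(t-\widehat T)}\leqslant e^{-\lambda_1^\Omega(t-\widehat T)}$ for every $j$, one obtains
\begin{equation*}
p^\Omega_t(x,x)=\sum_{j\geqslant 1}e^{-\lambda_j^\Omega(t-\widehat T)}\,e^{-\lambda_j^\Omega \widehat T}\phi_j^\Omega(x)^2\leqslant e^{-\lambda_1^\Omega(t-\widehat T)}\,p^\Omega_{\widehat T}(x,x),
\end{equation*}
and the analogous bound for $y$. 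Substituting these into the Cauchy--Schwarz estimate yields the claim \eqref{BR-equ-3.4}.

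The only non-routine point is the interchange of sum and integral (or equivalently, pointwise convergence of the spectral expansion at the diagonal), but this is standard: the partial sums converge in $L^2(\Omega\times\Omega)$ to the kernel, and pointwise positivity of the tail combined with Fatou's lemma gives both the finiteness of $p^\Omega_s(x,x)$ and the validity of termwise manipulations. No further regularity of $\Omega$ or $(X,d,\meas)$ is needed beyond what has already been assumed to set up the spectral theory of $\Delta_\Omega$.
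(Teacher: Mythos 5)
Your proof is correct, and it reaches the conclusion along a partly different route than the paper's own argument. The off-diagonal step (Cauchy--Schwarz applied to the $\ell^2$ sequences $e^{-\lambda_j^\Omega t/2}\phi_j^\Omega(\cdot)$, giving $p_t^\Omega(x,y)\leqslant\sqrt{p_t^\Omega(x,x)\,p_t^\Omega(y,y)}$) is identical to what the paper does. The difference is in the diagonal decay estimate. You compare the series for $p_t^\Omega(x,x)$ and $p_{\widehat T}^\Omega(x,x)$ term by term, factoring $e^{-\lambda_j^\Omega t}=e^{-\lambda_j^\Omega(t-\widehat T)}e^{-\lambda_j^\Omega \widehat T}$ and using $e^{-\lambda_j^\Omega(t-\widehat T)}\leqslant e^{-\lambda_1^\Omega(t-\widehat T)}$ together with nonnegativity of each term $e^{-\lambda_j^\Omega \widehat T}\phi_j^\Omega(x)^2$. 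The paper instead derives a differential inequality: using the semigroup identity $p_{2t}^\Omega(x,x)=\int_\Omega[p_t^\Omega(x,z)]^2{\rm d}\meas$, differentiating under the integral sign, integrating by parts to obtain $-2\int_\Omega|\nabla p_t^\Omega(x,\cdot)|^2{\rm d}\meas$, and bounding this via the Rayleigh quotient characterization of $\lambda_1^\Omega$ to get $\partial_s\ln p_s^\Omega(x,x)\leqslant -\lambda_1^\Omega$, which is then integrated from $\widehat T$ to $t$. Your route is more elementary — it needs only the validity of the spectral expansion \eqref{BR-equ-2.6} (which the paper already states) and positivity, and avoids the auxiliary claims about differentiability in $t$ and domain membership of $p_t^\Omega(x,\cdot)$. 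The paper's log-derivative argument, in exchange, does not rely on summing the discrete spectral series pointwise and would generalize verbatim to settings with non-discrete spectrum (replacing $\lambda_1^\Omega$ with the spectral bottom), but in the present bounded-domain setting both derivations are entirely sound, and yours is shorter.
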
 			

\begin{proof}

From the fact that $p^\Omega_{t_0}(x,\cdot)\in D(\Delta_\Omega)$  for any $t_0>0$ and the $L^2$-contraction of $H^\Omega_t$, the Lebesgue's dominated convergence theorem implies
\begin{equation}\label{BR-equ-3.5}
	\frac{\partial}{\partial t}\int_\Omega p^\Omega_t(x,y)f(y){\rm d}\meas(y)=\int_\Omega\frac{\partial}{\partial t}p_t^\Omega(x,y)f(y){\rm d}\meas(y)
\end{equation}
for any $f\in L^2(\Omega)$ and $t>0$.

From the semi-group property of heat kernel  and (\ref{BR-equ-3.5}), we have for any $t>0$ and $x\in\Omega$ that
		\begin{equation*}
		\begin{split}
			\frac{\partial}{\partial t} p^\Omega_{2t}(x,x)&=\frac{\partial}{\partial t}\int_\Omega [p_t^\Omega(x,y)]^2{\rm d}\meas=2\int_\Omega \Delta_\Omega p_t^\Omega(x,y)p^\Omega_t(y,x){\rm d}\meas\\
			&=-2\int_\Omega |\nabla p_t^\Omega(x,y)|^2{\rm d}\meas	\\
			&\ls -2\lambda^\Omega_1 \cdot \int_\Omega [ p_t^\Omega(x,y)]^2{\rm d}\meas=-2\lambda^\Omega_1 \cdot  p^\Omega_{2t}(x,x).
						\end{split}
	\end{equation*} 	
This is, by putting $s=2t$, 	\begin{equation*}
					\frac{\partial}{\partial s} \ln p^\Omega_{s}(x,x)\ls - \lambda^\Omega_1.	
	\end{equation*}
					Therefore,  for each $\widehat T>0$ and $t>\widehat T$, by
					 integrating over $(\widehat T, t)$,  we have
\begin{equation*}
	p^\Omega_t(x,x) \ls e^{-\lambda_1^\Omega(t-\widehat T)}\cdot p^\Omega_{\widehat T}(x,x).			 			
\end{equation*}	
This is (\ref{BR-equ-3.4}) for $y=x$.
For any $x,y\in \Omega$, by \eqref{BR-equ-2.6}, we have
\begin{equation*}
\begin{split}	
 p^\Omega_t(x,y)&=\sum_je^{-\lambda_j^\Omega t }\phi_j^\Omega(x)\phi^\Omega_j(y)\ls \sum_je^{-\lambda_j^\Omega t/2 }\phi_j^\Omega(x)\cdot e^{-\lambda_j^\Omega t/2}\phi^\Omega_j(y)\\
 &\ls \sqrt{p_t^\Omega(x,x)\cdot p_t^\Omega(y,y)},
\end{split}	 			
\end{equation*}	
which proves (\ref{BR-equ-3.4}) for any $(x,y)\in\Omega\times\Omega$.
\end{proof}

Now, we will combine Theorem \ref{BR-thm-1.10} to get the following upper bound of $G^\Omega(x,y)$.\begin{theorem}
	\label{BR-thm-3.9}
	There exists a constant $c_3:=c_{3}(N,K,{\rm diam}(\Omega), \meas(\Omega), R_{\rm ext})>0$  such that
		\begin{equation}\label{BR-equ-3.6}
  G^\Omega(x,y)\ls c_3 \cdot \delta(x)\delta(y) \left(  \int^{D^2}_{d^2(x,y)} \frac{    {\rm d}t }{t\cdot \meas(B_{ \sqrt{t}}(x))}+1\right),
		\end{equation}
for any $(x,y)\in \Omega\times\Omega$, where $D:=\max\{R_{\rm ext}, {\rm diam}(\Omega)\}$.
 			\end{theorem}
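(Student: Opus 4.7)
The plan is to split the time integral defining $G^\Omega(x,y)$ at $t=D^2$, bounding the short-time part $\int_0^{D^2}p_t^\Omega(x,y)\,dt$ via Theorem \ref{BR-thm-1.10}, and the long-time tail $\int_{D^2}^{\infty}p_t^\Omega(x,y)\,dt$ via the spectral decay in Lemma \ref{BR-lem-3.8}. Note $d(x,y)\le\mathrm{diam}(\Omega)\le D$, so $d^2(x,y)\le D^2$.

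For the short-time contribution, I apply Theorem \ref{BR-thm-1.10} with $T=D^2$ (the hypothesis $T\ge R_{\rm ext}^2$ holds since $D\ge R_{\rm ext}$) to obtain
\begin{equation*}
p_t^\Omega(x,y)\le C\,\delta(x)\delta(y)\cdot\frac{1}{t\,\meas(B_{\sqrt t}(y))}\exp\!\left(-\frac{d^2(x,y)}{c\,t}\right).
\end{equation*}
I then split at $t_0:=d^2(x,y)$. On $(t_0,D^2)$ the exponential is $\le 1$; since $\sqrt t\ge d(x,y)$, volume doubling gives $\meas(B_{\sqrt t}(y))\ge c\,\meas(B_{\sqrt t}(x))$, producing exactly the integral appearing on the right-hand side of \eqref{BR-equ-3.6}. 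On $(0,t_0)$, I use the super-polynomial decay $e^{-s}\le C_N\,s^{-(N/2+1)}$ together with the reverse-doubling bound $\meas(B_{\sqrt t}(y))\ge c\,(\sqrt t/d(x,y))^N\meas(B_{d(x,y)}(y))$ (valid because RCD$(K,N)$ with $K\le0$ is locally $N$-doubling on scales $\le D$); an elementary calculation then yields the integral $\le C/\meas(B_{d(x,y)}(y))$. Using $\meas(B_{d(x,y)}(y))\approx\meas(B_{d(x,y)}(x))$ and the monotonicity
\begin{equation*}
\frac{1}{\meas(B_{d(x,y)}(x))}\le C\int_{t_0}^{2t_0\wedge D^2}\frac{dt}{t\,\meas(B_{\sqrt t}(x))}+\frac{C}{\meas(\Omega)},
\end{equation*}
this term is absorbed into the right-hand side of \eqref{BR-equ-3.6}; the residual term $1/\meas(\Omega)$ appearing when $d(x,y)$ is comparable to $D$ joins the additive constant.

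For the long-time tail, Lemma \ref{BR-lem-3.8} with $\widehat T=D^2$ gives $p_t^\Omega(x,y)\le e^{-\lambda_1^\Omega(t-D^2)}\sqrt{p_{D^2}^\Omega(x,x)\,p_{D^2}^\Omega(y,y)}$, while Theorem \ref{BR-thm-1.10} at $t=D^2$ yields $p_{D^2}^\Omega(z,z)\le C\,\delta(z)^2/(D^2\,\meas(B_D(z)))\le C\,\delta(z)^2/(D^2\,\meas(\Omega))$, using $B_D(z)\supset\Omega$. Integrating in $t$ produces a bound $C\,\delta(x)\delta(y)/(\lambda_1^\Omega D^2\,\meas(\Omega))$, yielding the ``$+1$'' term in \eqref{BR-equ-3.6}.

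I expect the main obstacle to be the close-range integral $\int_0^{t_0}$: the factor $1/(t\,\meas(B_{\sqrt t}(y)))$ blows up as $t\to0^+$, and only by carefully balancing Gaussian decay of sufficiently high polynomial order against the RCD-type reverse volume comparison can one convert the small-scale singularity into the geometric quantity $1/\meas(B_{d(x,y)}(y))$ needed to match the right-hand side of \eqref{BR-equ-3.6}. A secondary technical point is verifying that the constants depending on $\lambda_1^\Omega$ can be controlled in terms of the listed quantities $K,N,\mathrm{diam}(\Omega),\meas(\Omega),R_{\rm ext}$, which follows from a Faber--Krahn or Cheeger-type estimate for the first Dirichlet eigenvalue on $\Omega$.
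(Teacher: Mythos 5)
Your proof is correct, and the overall architecture (use Theorem \ref{BR-thm-1.10} for $t<D^2$, Lemma \ref{BR-lem-3.8} for $t>D^2$, and bound $\lambda_1^\Omega$ from below via domain monotonicity $\Omega\subset B_D(y)$) is the same as the paper's. The one genuine difference is in the treatment of the short-time, close-range regime. The paper splits $(0,D^2)$ into \emph{three} pieces $(0,r^4/D^2)$, $(r^4/D^2,r^2)$, $(r^2,D^2)$ with $r=d(x,y)$, and handles the first two by the substitution $s=r^4/t$, which maps $(r^4/D^2,r^2)$ onto $(r^2,D^2)$ (so that the middle piece is reabsorbed into the claim-2 integral) and maps $(0,r^4/D^2)$ onto $(D^2,\infty)$ (so that the first piece becomes a convergent tail bounded by a constant). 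You instead handle the entire short-time part $(0,t_0)$, $t_0=d^2(x,y)$, in one shot: bound $e^{-s}\le C\,s^{-(N/2+1)}$, use volume doubling to compare $\meas(B_{\sqrt t}(y))$ with $\meas(B_{\sqrt{t_0}}(y))$, and obtain a closed-form estimate $\lesssim\delta(x)\delta(y)/\meas(B_{d(x,y)}(y))$, which you then absorb into the right-hand side of \eqref{BR-equ-3.6} via the doubling-based observation $1/\meas(B_{\sqrt{t_0}}(x))\lesssim\int_{t_0}^{2t_0\wedge D^2}\frac{dt}{t\meas(B_{\sqrt t}(x))}+1/\meas(\Omega)$. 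Your route is more direct and avoids the double change of variables; the paper's substitution trick is slightly more systematic but requires tracking a few more constants. Both yield the same end result with the same dependence of $c_3$. (One cosmetic point: the inequality $\meas(B_{\sqrt t}(y))\ge c(\sqrt t/d(x,y))^N\meas(B_{d(x,y)}(y))$ for $\sqrt t\le d(x,y)$ is just the doubling estimate \eqref{BR-equ-2.4} read backwards, not ``reverse doubling'' in the usual sense; and the paper gets the lower bound on $\lambda_1^\Omega$ simply from $\Omega\subset B_D(y)$ and monotonicity of the Dirichlet eigenvalue, with no need for Faber--Krahn.)
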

			
\begin{proof}

 Denote by $r:=d(x,y)$, we have
  \begin{align*}
  	 G^\Omega(x,y)=	\int_0^{\frac{r^4}{D^2}}p^\Omega_t(x,y){\rm d}t+\int_{\frac{r^4}{D^2}}^{r^2}p^\Omega_t(x,y){\rm d}t+ \int_{r^2}^{ D^2 }p^\Omega_t(x,y){\rm d}t+\int_{D^2}^{ \infty }p^\Omega_t(x,y){\rm d}t.
  \end{align*}
We claim that there exists $c:=c_{K, N, D, {\rm diam}(\Omega), \meas(\Omega), R_{\rm ext}}>0$ such that

\begin{equation}\label{BR-equation-claim-1}
	\int_{D^2}^\infty p^\Omega_t(x,y){\rm d}t\ls c\cdot \delta(x)\delta(y),
\end{equation}

\begin{equation}\label{BR-equation-claim-2}
	\int_{r^2}^{D^2} p^\Omega_t(x,y){\rm d}t\ls  c\cdot  \delta(x)\delta(y)  \int_{r^2}^{D^2}\frac{{\rm d}t}{t\cdot \meas(B_{\sqrt{ t}}(y))},
\end{equation}

\begin{equation}
	\label{BR-equation-claim-3}
	\int_{\frac{r^4}{D^2}}^{r^2}p^\Omega_t(x,y){\rm d}t\ls  c\cdot \delta(x)\delta(y)   \int_{r^2}^{D^2}\frac{{\rm d}t}{t\cdot \meas(B_{\sqrt{ t}}(y))},
	\end{equation}
and
\begin{equation}\label{BR-equation-claim-4}
	\int_{0}^{ \frac{r^4}{D^2} }p^\Omega_t(x,y){\rm d}t \ls  c\cdot   \delta(x)\delta(y).
\end{equation}

By \eqref{BR-equ-1.6} and \eqref{BR-equ-3.4} (and by taking $\widehat T=D^2$ therein), and integrating over $(D^2,\infty)$, we have
$$\int_{D^2}^\infty p_t^\Omega(x,y)\ls c\delta(x)\delta(y)\int_{D^2}^\infty  e^{-\lambda_1^\Omega(t-D^2)} {\rm d}t
\ls c\delta(x)\delta(y)/\lambda_1^\Omega,$$
where we haved used $\meas(B_D(y))\gs \meas(\Omega)$ for any $y\in\Omega$ (since $\Omega\subset B_D(y)$). This is \eqref{BR-equation-claim-1}, because $\Omega\subset B_D(y)$ also implies
$\lambda^\Omega\gs  \lambda^{B_D(y)}\gs c_{N,K,D}.$

By using \eqref{BR-equ-1.6} again, we have for any $t\in(0,D^2)$ that
\begin{equation}\label{BR-equ-3.11}
	p^\Omega_t(x,y)\ls \frac{\delta(x)\delta(y)}{t} \cdot\frac{C_2}{\meas(B_{\sqrt{ t}}(y))}\cdot\exp\left(-\frac{r^2}{C_1t}\right)\ls \frac{\delta(x)\delta(y)}{t} \cdot\frac{C_2}{\meas(B_{\sqrt{ t}}(y))}.
\end{equation}
Integrating it over $(r^2, D^2)$, we get \eqref{BR-equation-claim-2}.

Integrating (\ref{BR-equ-3.11}) over $(\frac{r^4}{D^2}, r^2)$ and setting $s=\frac{{r^4}}{t},$ we have
\begin{equation*}	
\begin{split}
  \int_{\frac{r^4}{D^2}}^{r^2}p^\Omega_t(x,y){\rm d}t&\ls C_2\delta(x)\delta(y) \int^{r_2}_{\frac{r^4}{D^2}}\frac{\exp(-\frac{r^2}{C_1t})}{t\cdot \meas(B_{\sqrt t}(x))}{\rm d}t\\
  &=C_2 \delta(x)\delta(y)\int^{D^2}_{r^2}\frac{r^4}{s^2\cdot t}\cdot\frac{\exp(-\frac{s}{C_1r^2})}{\meas(B_{ \sqrt{t}}(x))}{\rm d}s.
  \end{split}
\end{equation*}
 By (\ref{BR-equ-2.4}) and $t=r^4/s\ls r^2\ls s\ls D^2$,
 $$\frac{\meas(B_{\sqrt s}(x))}{\meas(B_{\sqrt t}(x))}\ls  c\left(\frac{s}{r^2}\right)^N.$$
Therefore, we have
\begin{equation*}
	\begin{split}
		\int^{D^2}_{r^2}\frac{r^4}{s^2\cdot t}\cdot\frac{\exp(-\frac{s}{C_1r^2})}{\meas(B_{ \sqrt{t}}(x))}{\rm d}s
				&\ls C_2\int^{D^2}_{r^2}\frac{1}{s}\cdot\frac{\exp(-\frac{s}{C_1r^2})}{\meas(B_{ \sqrt{s}}(x))} \cdot \left(\frac{s}{r^2}\right)^N {\rm d}s\\
		&\ls c\int^{D^2}_{r^2} \frac{ {\rm d}s }{s\cdot\meas(B_{ \sqrt{s}}(x))},		
					\end{split}
\end{equation*}
where we used that $\sup_{a:=s/r^2\gs 1}a^{N}\exp(-a/C_1):=C(N,C_1)<+\infty,$ which follows \eqref{BR-equation-claim-3}.

Integrating (\ref{BR-equ-3.11}) over $(0,\frac{r^4}{D^2})$ and setting $s=\frac{{r^4}}{t},$ we have
\begin{equation*}
	\begin{split}
				\int_0^{\frac{r^4}{D^2}}p^\Omega_t(x,y){\rm d}t& \ls C_2\delta(x)\delta(y) \int_0^{\frac{r^4}{D^2}}\frac{\exp(-\frac{r^2}{C_1t})}{t\cdot \meas(B_{\sqrt t}(x))}{\rm d}t\\
		&=C_2 \delta(x)\delta(y) \int_{D^2}^{\infty}\frac{1}{s }\cdot\frac{\exp(-\frac{s}{C_1r^2})}{\meas(B_{ \sqrt{t}}(x))}{\rm d}s.
			\end{split}
\end{equation*}
Since $\sqrt s\gs D\gs {\rm diam}(\Omega)$, by the Bishop-Gromov inequality (\ref{BR-equ-2.3}) and $t\ls s$ (since $t\ls r^4/D^2\ls r^2$), we get
 $$  \frac{1}{\meas(B_{\sqrt t}(x))}\ls \frac{1}{\meas(\Omega)}\frac{  \meas(B_{\sqrt s}(x))}{\meas(B_{\sqrt t}(x))}\ls C\frac{  V_{N,K}(\sqrt s)}{V_{N,K}(\sqrt t)}. $$
Since $K<0$ and $N>1$ we have assumed, we have
 $V_{N,K}(\sqrt t)\gs c_Nt^{N/2}$
  and
$$V_{N,K}(\sqrt s)= \int_0^{\sqrt s}\sinh^{N-1}_{\frac{-K}{N-1}}(\tau){\rm d}\tau\ls \int_0^{\sqrt s}\left(\frac{\exp(\sqrt{\frac{-K}{N-1}} \ \tau)}{\sqrt{\frac{-K}{N-1}}}\right)^{N-1}{\rm d}\tau \ls C\cdot e^{\sqrt{-K(N-1)}\sqrt s}.$$
 Therefore, we obtain
 \begin{equation*}
 \begin{split}
	   \frac{1}{s } \cdot\frac{V_{N,K}(\sqrt s)}{V_{N,K}(\sqrt t)}&
	    \ls C\cdot \exp\left( \sqrt{-K(N-1)}\sqrt s\right)\cdot \frac{s^{N/2-1}}{r^{2N}}\\
	    & \ls C\cdot \exp\left(\frac{s}{2C_1r^2}+ \frac{-C_1K(N-1)}{2} D^2\right)\cdot \left(\frac{s}{r^2}\right)^{N}\cdot D^{-N-2}\\
	    &=  C'\cdot \exp\left(\frac{s}{2C_1r^2}\right)\cdot \left(\frac{s}{r^2}\right)^{N},	
	       \end{split}	
			 \end{equation*}
 where we have used $\sqrt{-K(N-1)}\sqrt s\ls \frac{s}{2C_1r^2}+\frac{-C_1K(N-1)r^2}{2}$, $r\ls D$
  and $ s^{-N/2-1}\ls D^{-N-2}$.
By integrating over $(D^2,\infty)$ , we conclude that
\begin{equation*}
	\begin{split}
	\int_0^{\frac{r^4}{D^2}} p^\Omega_t(x,y){\rm d}t&\ls  C\cdot  \delta(x)\delta(y) \int_{D^2}^\infty \exp\left(-\frac{s}{C_1r^2}+\frac{s}{2C_1r^2}\right)\cdot \left(\frac{s}{r^2}\right)^N  {\rm d}s\\
	&=  C\cdot   \delta(x)\delta(y) \cdot r^2   \int_{D^2/r^2}^\infty \exp\left(-\frac{\tau}{2C_1}\right)\tau^N {\rm d}\tau  \\
	&\ls  C D^2  \cdot\delta(x)\delta(y)\cdot   \int_{0}^\infty \exp\left(-\frac{\tau}{2C_1}\right)\tau^N {\rm d}\tau,
	  				\end{split}
\end{equation*}
which implies  the claim \eqref{BR-equation-claim-4}, and hence we have finished the proof of   \eqref{BR-equ-3.6}.
\end{proof}

Now we want to estimate $|\nabla G^\Omega|$. Notice that $G^\Omega(x,y)$ is not in $W^{1,2}$ in general (see Remark  \ref{BR-remark-3.7} (1)); for this reason, we consider the following approximations
$$G_{\varepsilon}(x,y):=\int_\varepsilon^\infty p_t^\Omega(x,y){\rm d}t,\qquad G_{\varepsilon,T}(x,y):=\int_\varepsilon^T p_t^\Omega(x,y){\rm d}t$$
 for any $0<\varepsilon<T<\infty$.
\begin{lemma}
	\label{BR-lem-3.10}
	For any $\varepsilon>0, T\in(\varepsilon,\infty)$ and any given $x\in\Omega$, it holds
$G_\varepsilon(x,\cdot)\in D(\Delta_\Omega)\cap Lip_{\rm loc}(\Omega)$  and $G_{\varepsilon,T}(x,\cdot)\in Lip_{\rm loc}(\Omega).$
 Moreover, we have
 	\begin{equation}\label{BR-equ-3.12}
			|\nabla G_{\varepsilon,T}(x,\cdot)|^2(y)\ls    \left( 2p^\Omega_{T}(x,y)  + C_{N,K,{\rm diam}(\Omega)} \cdot \frac{G_\varepsilon(x,y)}{\delta^2(y)} \right)\cdot G_\varepsilon(x,y) 	
	\end{equation}	
and
	\begin{equation}\label{BR-equ-3.13}
		|\nabla G_{\varepsilon}(x,\cdot)|(y)\ls  C_{N,K,{\rm diam}(\Omega)}\cdot  \frac{G^\Omega(x,y)}{\delta(y)}
	\end{equation}
	for almost all  $y\in \Omega$ such that $\delta(y)\ls \sqrt{\varepsilon}$.
\end{lemma}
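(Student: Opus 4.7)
The scheme is to combine the representation $G_\varepsilon(x,\cdot) = G[p_\varepsilon^\Omega(x,\cdot)]$ with a Cauchy--Schwarz argument in the time variable and a Li--Yau-type gradient estimate for positive solutions of the heat equation.

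I would first dispose of the regularity assertions. By Fubini and the semigroup identity $p_{t+\varepsilon}^\Omega = H_t^\Omega p_\varepsilon^\Omega$,
\begin{equation*}
G_\varepsilon(x,\cdot) = \int_0^\infty H_t^\Omega[p_\varepsilon^\Omega(x,\cdot)]\,dt = G[p_\varepsilon^\Omega(x,\cdot)],
\end{equation*}
and since $p_\varepsilon^\Omega(x,\cdot) \in L^2(\Omega)\cap L^\infty_{\mathrm{loc}}(\Omega)$ by \eqref{BR-equ-2.8}, Lemma \ref{BR-lem-3.6} gives $G_\varepsilon(x,\cdot) \in D(\Delta_\Omega)$ with $\Delta_\Omega G_\varepsilon(x,\cdot) = -p_\varepsilon^\Omega(x,\cdot) \in L^\infty_{\mathrm{loc}}(\Omega)$. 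Lemma \ref{BR-lem-2.5}(1) then yields $G_\varepsilon(x,\cdot) \in \mathrm{Lip}_{\mathrm{loc}}(\Omega)$, and the same reasoning applied to $G_{\varepsilon,T} = G_\varepsilon - G_T$ treats $G_{\varepsilon,T}$.

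For the bound \eqref{BR-equ-3.12}, I would first justify the pointwise representation $\nabla_y G_{\varepsilon,T}(x,y) = \int_\varepsilon^T \nabla_y p_s^\Omega(x,y)\,ds$ via Riemann-sum approximation in $s$ (whose gradients converge in $W^{1,2}$ by semigroup regularity, and whose slopes agree a.e.\ thanks to the Lipschitz continuity of $G_{\varepsilon,T}(x,\cdot)$). Weighted Cauchy--Schwarz in $s$ then yields
\begin{equation*}
|\nabla_y G_{\varepsilon,T}|^2(y) \le I(y)\cdot G_{\varepsilon,T}(x,y) \le I(y)\cdot G_\varepsilon(x,y),\qquad I(y):= \int_\varepsilon^T \frac{|\nabla_y p_s^\Omega(x,y)|^2}{p_s^\Omega(x,y)}\,ds.
\end{equation*}
The hypothesis $\delta(y) \le \sqrt{\varepsilon}$ ensures $B_{\delta(y)}(y) \subset \Omega$, on which $p_s^\Omega(x,\cdot)$ is a positive solution of the heat equation. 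Applying an interior Li--Yau gradient estimate with parameter $\alpha = 2$ at the center $y$ gives
\begin{equation*}
\frac{|\nabla_y p_s^\Omega|^2(y)}{p_s^\Omega(y)} \le 2\,\partial_s p_s^\Omega(y) + \left(\frac{C_N}{s} + \frac{C_N}{\delta(y)^2} + C_{N,K}\right) p_s^\Omega(y).
\end{equation*}
Integrating in $s \in (\varepsilon, T)$ and using $\delta(y)^2 \le \varepsilon$ to absorb $1/\varepsilon$ and the $K$-term into $C/\delta(y)^2$ (with $C$ depending on $N, K, \mathrm{diam}(\Omega)$) delivers
\begin{equation*}
I(y) \le 2\bigl(p_T^\Omega - p_\varepsilon^\Omega\bigr)(y) + \frac{C\cdot G_\varepsilon(x,y)}{\delta(y)^2} \le 2p_T^\Omega(x,y) + \frac{C\cdot G_\varepsilon(x,y)}{\delta(y)^2},
\end{equation*}
which is \eqref{BR-equ-3.12}.

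Finally \eqref{BR-equ-3.13} would follow by sending $T \to \infty$ in \eqref{BR-equ-3.12}: the large-time decay \eqref{BR-equ-3.4} forces $p_T^\Omega(x,y) \to 0$ while $G_{\varepsilon,T} \to G_\varepsilon$ in $W^{1,2}(\Omega)$ (the tail integral is controlled by the same exponential decay), so the quantitative bound passes to the limit, leaving only the $G_\varepsilon/\delta^2$ term; combining with $G_\varepsilon \le G^\Omega$ and taking square roots yields \eqref{BR-equ-3.13}. The delicate step is the interior Li--Yau inequality: one must invoke a local version valid on $RCD(K,N)$-spaces whose remainder has the correct $1/R^2$ scaling, and must apply it to the Dirichlet heat kernel $p_s^\Omega(x,\cdot)$, which is merely a positive heat-equation solution on the sub-ball $B_{\delta(y)}(y)$ rather than globally on $\Omega$.
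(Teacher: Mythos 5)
Your proposal follows essentially the same approach as the paper: represent $G_\varepsilon(x,\cdot)=G[p^\Omega_\varepsilon(x,\cdot)]$ to get $D(\Delta_\Omega)\cap Lip_{\rm loc}$-regularity via Lemmas \ref{BR-lem-3.6} and \ref{BR-lem-2.5}, bound $|\nabla G_{\varepsilon,T}|$ by $\int_\varepsilon^T|\nabla p^\Omega_t|\,dt$, combine Cauchy--Schwarz in $t$ with the local Li--Yau estimate (\cite{ZZ16}) on a ball of radius comparable to $\delta(y)\ls\sqrt\varepsilon$, and then let $T\to\infty$ using the spectral decay \eqref{BR-equ-3.4} and $G_\varepsilon\ls G^\Omega$. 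The only cosmetic differences are that the paper obtains the integral gradient bound directly from difference quotients rather than Riemann sums, and proves $G_{\varepsilon,T}\in Lip_{\rm loc}$ from that bound instead of from the decomposition $G_\varepsilon-G_T$; both variants are sound.
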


\begin{proof}
(i) By the definition, for any fixed $x\in\Omega$ and $\varepsilon>0$, we have
\begin{equation*}
	\begin{split}	
	G_\varepsilon(x,y)&=\int_\varepsilon^\infty p^\Omega_{t}(x,y){\rm d}t=\int_0^\infty p^\Omega_{t+\varepsilon}(x,y){\rm d}t\\
	&=\int_0^\infty\int_\Omega p^\Omega_\varepsilon(x,z)p^\Omega_t(z,y){\rm d}\meas(z){\rm d}t=G[p^\Omega_\varepsilon(x,\cdot)](y).		
	\end{split}
\end{equation*}
From Lemma \ref{BR-lem-3.6}, $G_\varepsilon(x,\cdot)\in D(\Delta_\Omega)$ and $\Delta_\Omega G_\varepsilon(x,\cdot)=-p^\Omega_\varepsilon(x,\cdot).$ According to Lemma \ref{BR-lem-2.5}, we have $G_\varepsilon(x,\cdot)\in Lip_{\rm loc}(\Omega).$

(ii) Given any  $\varepsilon>0, T\in(\varepsilon,\infty)$ and  $x\in\Omega$, we  claim that $G_{\varepsilon,T}(x,\cdot)\in Lip_{\rm loc}(\Omega)$ and
\begin{equation}\label{BR-equ-3.14}
|\nabla  G_{\varepsilon,T}(x,\cdot)|(y)\ls \int_\varepsilon^T|\nabla p_t^\Omega(x,\cdot)|(y){\rm d}t.
\end{equation}
for almost all  $y\in\Omega$.

Because of $p^\Omega_t(x,\cdot)\in Lip_{\rm loc}(\Omega\times (0,T))$ (by \cite[Corollary 1.5]{HZ20}) and $p^\Omega_t(x,\cdot)>0$ on $\Omega\times (0,\infty)$, for any ball $B\Subset\Omega$ containing $y$  and $[\varepsilon,T]\subset(0,\infty)$,  we have  $m\ls p^\Omega_t(x,\cdot)\ls M$ on $\overline{B} \times [\varepsilon,T]$ for some $m>0$ and $M<\infty$ (depending on $B, \varepsilon$ and $T$). According to \cite[Theorem 1.1]{HZ20}, we know that
$$|\nabla p^\Omega_t(x,\cdot)|\ls C_{N, K, B, T, m, M}\quad {\rm almost\ everywhere\ in}\  \  \overline{B} \times [\varepsilon, T].$$
  By  using Lebesgue's dominated convergence theorem and
$$\frac{|G_{\varepsilon,T}(x,y')-G_{\varepsilon,T}(x,y'')|}{d(y',y'')}\ls\int_\varepsilon^T \frac{|p^\Omega_{t}(x,y')-p^\Omega_{t}(x,y'')|}{d(y',y'')}{\rm d}t$$
for any $y',y''\in B$, $y'\not=y''$,
we get  $G_{\varepsilon,T}(x,\cdot)\in Lip(B)$ and 	\begin{equation*}
	{\rm Lip} G_{\varepsilon,T}(x,\cdot)(y)\ls \int_\varepsilon^T {\rm Lip}p_t^\Omega(x,\cdot) (y){\rm d}t.
\end{equation*}
for any $y\in\Omega$. 	
	it follows the claim (\ref{BR-equ-3.14}).
	
(iii) It suffices to show  that (\ref{BR-equ-3.12}) and (\ref{BR-equ-3.13})  hold almost eveywhere in  $ B_R(y_0)$ for any  $y_0\in \Omega$   such that $\delta(y_0)<\sqrt\varepsilon$ and $R:=\delta(y_0)/2.$

	Letting $f(\cdot,t):=\ln p^\Omega_t(x,\cdot)$, by local Li-Yau's gradient estimate \cite[Theorem 1.4]{ZZ16}, we have   that
	$$|\nabla f|^2(y,t)-2\frac{\partial f}{\partial t}(y,t)\ls C_{K,N}\left(\frac{1}{R^2}+\frac{1}{t}+1\right)$$
	for almost all $y\in B_R(y_0)$ and almost all $t\in(0,\infty)$. Therefore,  for $t>\varepsilon,$
	$$\frac{|\nabla p^\Omega_t(x,\cdot)|^2(y)}{p^\Omega_t(x,y)}\ls 2\frac{\partial}{\partial t} p^\Omega_t(x,y)+C_{N,K} \left(\frac{1}{R^2}+\frac{1}{\varepsilon}+1\right)p^\Omega_t(x,y)$$	
	for almost all $y\in B_R(y_0)$. Integrating over $[\varepsilon,T]$ and by H\"older inequality, we get
	\begin{equation*}
		\begin{split}
			\left(\int_{\varepsilon}^{T}|\nabla p^\Omega_t(x,\cdot)|(y){\rm d}t\right)^2& \ls  \int_{\varepsilon}^{T}\frac{|\nabla p^\Omega_t(x,\cdot)|^2(y)}{p^\Omega_t(x,y)}{\rm d}t\cdot \int_{\varepsilon}^{T}  p^\Omega_t(x,y){\rm d}t\\
			&\ls \left[2\left(p^\Omega_{T} -p^\Omega_{\varepsilon} \right)+C_{N,K}\left(\frac{1}{R^2}+\frac{1}{\varepsilon}+1\right)G_{\varepsilon,T} \right]\cdot G_{\varepsilon,T}(x,y)		\end{split}
	\end{equation*}
for almost all $y\in B_R(y_0)$. By using (\ref{BR-equ-3.14}), $p^\Omega_\varepsilon(x,y)\gs0$ and $\varepsilon\gs 4 R^2$ (since $2R=\delta(y_0)\ls \sqrt\varepsilon$), we get
		\begin{equation}\label{BR-equ-3.15}
		\begin{split}
			|\nabla G_{\varepsilon,T}(x,\cdot)|^2(y)&\ls    \left( 2p^\Omega_{T}(x,y)  + C_{N,K,{\rm diam}(\Omega)} \cdot \frac{G_{\varepsilon,T}(x,y)}{R^2} \right)\cdot G_{\varepsilon,T}(x,y) \\
			&\ls    \left( 2p^\Omega_{T}(x,y)  + C_{N,K,{\rm diam}(\Omega)} \cdot \frac{G_{\varepsilon}(x,y)}{R^2} \right)\cdot G_{\varepsilon}(x,y) 	
		\end{split}
			\end{equation}	
for almost all $y\in B_R(y_0)$, since $G_{\varepsilon, T}(x,y)\ls G_\varepsilon(x,y)$,
which implies (\ref{BR-equ-3.12}), by using $\delta(y)\ls \delta(y_0)+R=3R$.

From \eqref{BR-equ-3.4}, we have
$$\lim_{T\to\infty}p^\Omega_T(x,y)=0.$$
By using (\ref{BR-equ-3.4}) again (taking  $\widehat T=\varepsilon$), and then integrating over $(\varepsilon,+\infty)$, combining with (\ref{BR-equ-2.8}), we get
$$ G_{\varepsilon}(x,y)\ls \frac{1}{\lambda^\Omega_1}\sqrt{p^\Omega_\varepsilon(x,x)\cdot p^\Omega_\varepsilon(y,y)}\ls C_{\varepsilon, N,K,\Omega}.$$
Letting $T\to\infty$ in \eqref{BR-equ-3.15}, using Arzela-Ascoli theorem and $G_\varepsilon\ls G^\Omega$, we conclude (\ref{BR-equ-3.13}). The proof is finished.
\end{proof}

\section{A version of the Gauss-Green formula on sets with one-sided approximations}

As we have described in the introduction, the proof of Theorem \ref{BR-thm-1.8-main} is based on  Lemma \ref{BR-lem-3.10} and Proposition \ref{BR-prop-1.12}, which will be proved in this section.
Let $(X,d,\meas)$ be an $RCD(K,N)$ space with $K\in\mathbb R$ and $N\in(1,\infty)$.

\subsection{The Gauss-Green formula on boundary-regular domains}
 Let us begin with a simple case when $\Omega$ is a boundary-regular set, which will be defined in the following.
\begin{defn}
	\label{BR-def-4.2}
	Let $\Omega$ be an open subset of $X$ with $\meas(\Omega)<\infty$. The {\emph{ upper}} and {\emph{lower inner Minkowski contents}} are given by
	$$ \mathscr{IM}_+(\Omega):=\limsup_{\delta\to0^+}\frac{\meas (\Omega\setminus\Omega_\delta)}{\delta},\quad  \mathscr{IM}_-(\Omega):=\liminf_{\delta\to0^+}\frac{\meas (\Omega\setminus\Omega_\delta)}{\delta},$$
	recalling $\Omega_\delta=\{x\in \Omega|\ d(x,\partial\Omega)>\delta\} $ for any $\delta\in(0,1)$. If $\mathscr{IM}_+(\Omega)=\mathscr{IM}_-(\Omega)$, we call $\mathscr{IM} (\Omega):=\mathscr{IM}_+(\Omega)$ the {\emph{inner Minkowski contents}}  of $\Omega$.
	\end{defn}

\begin{defn}[cf. \cite{BCM22,MMS16}] \label{BR-def-4.3}
	Let $\Omega$ be an open set of finite perimeter, $\meas(\Omega)<\infty$ and $\meas(\partial \Omega)=0$.
It is called \emph{boundary-regular} if
$$\mathscr{IM}(\Omega)=    Per_\Omega(X).$$
	  	 		\end{defn}

\begin{remark}\label{BR-rem-4.4}
(1) The definition of the boundary-regularity can be replaced to  require
	$$Per_\Omega(X)\gs \mathscr{IM}_{+}(\Omega),$$
because the definition of perimeter implies the inverse inequality
	$$Per_\Omega(X)\ls \liminf_{\delta\to0}\int_X|\nabla \eta_\delta|{\rm d}\meas=\liminf_{\delta\to0}\frac{\meas\big(\Omega\setminus\Omega_\delta\big)}{\delta}=\mathscr{IM}_{-}(\Omega)$$
where
$$\eta_\delta(x):=\begin{cases}
	1 & x\in \Omega_\delta,\\
	\frac{d(x,X\setminus\Omega)}{\delta} & x\in \Omega\setminus\Omega_\delta,\\
	0& x\in X\setminus\Omega.
\end{cases}
	$$
		
	(2)  Given any $x_0\in X$, it was proved in \cite{MMS16,GH16} that for almost every $r\in(0,\infty)$ the ball $B_r(x_0)$  is a boundary-regular domain (see also \cite[Remark 4.9]{BCM22}).
	\end{remark}

 We have the following approximations of a boundary-regular set from its interior.
 \begin{lemma}\label{BR-lem-4.5}
	Suppose that the open set $\Omega$ is bounded and boundary-regular. Then there is a sequence $\phi_k\in Lip_0(\Omega)$, $0\ls \phi_k\ls 1$, such that   $\phi_k \to \chi_\Omega$ in $L^1(X)$,  and
	$$|\nabla \phi_k|\cdot\meas \rightharpoonup Per_{\Omega}\quad {\rm weakly\ as}\ k\to\infty.$$
\end{lemma}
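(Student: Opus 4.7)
My plan is to use an explicit distance-based cutoff, essentially the function $\eta_\delta$ appearing in Remark~\ref{BR-rem-4.4}(1), reshaped so that the support is compactly contained in $\Omega$. Concretely, with $\delta_k\downarrow 0$ I would set
\[
\phi_k(x):=\max\left\{0,\,\min\left\{1,\,\tfrac{1}{\delta_k}\bigl(d(x,X\setminus\Omega)-\delta_k\bigr)\right\}\right\},
\]
so that $\phi_k\equiv 0$ on $\Omega\setminus\Omega_{\delta_k}$, $\phi_k\equiv 1$ on $\Omega_{2\delta_k}$, and $\phi_k$ is $(1/\delta_k)$-Lipschitz on the transition region $\Omega_{\delta_k}\setminus\Omega_{2\delta_k}$. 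Since $X$ is proper ($N<\infty$) and $\Omega$ is bounded, $\overline{\Omega_{\delta_k}}$ is a compact subset of $\Omega$, so $\phi_k\in Lip_0(\Omega)$ with $0\ls\phi_k\ls 1$; the $L^1(X)$-convergence $\phi_k\to\chi_\Omega$ follows from $|\phi_k-\chi_\Omega|\ls \chi_{\Omega\setminus\Omega_{2\delta_k}}$ together with $\meas(\Omega\setminus\Omega_{2\delta_k})\to 0$ (a consequence of $\meas(\Omega)<\infty$ and $\Omega\setminus\Omega_{2\delta_k}\downarrow \emptyset$).

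For the measure convergence $|\nabla\phi_k|\meas\rightharpoonup Per_\Omega$ I would verify the two ingredients of the classical portmanteau theorem for positive finite Radon measures: total-mass convergence together with a liminf inequality on every open set. The slope bound and the support control yield
\[
\int_X|\nabla\phi_k|\,d\meas\ls \frac{\meas(\Omega_{\delta_k}\setminus\Omega_{2\delta_k})}{\delta_k}=\frac{\meas(\Omega\setminus\Omega_{2\delta_k})}{\delta_k}-\frac{\meas(\Omega\setminus\Omega_{\delta_k})}{\delta_k},
\]
and the boundary-regularity hypothesis $\mathscr{IM}(\Omega)=Per_\Omega(X)$ sends the first ratio to $2\,Per_\Omega(X)$ and the second to $Per_\Omega(X)$, giving $\limsup_k\int_X|\nabla\phi_k|\,d\meas\ls Per_\Omega(X)$. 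For the liminf inequality on an arbitrary open $U\subset X$, the $L^1$-lower semicontinuity of the total variation (built into the definition of $|D\chi_\Omega|(U)$) applied to $\phi_k\to\chi_\Omega$ delivers
\[
Per_\Omega(U)=|D\chi_\Omega|(U)\ls \liminf_k\int_U|\nabla\phi_k|\,d\meas.
\]
Specializing to $U=X$ upgrades the previous $\limsup$-estimate to the equality $\int_X|\nabla\phi_k|\,d\meas\to Per_\Omega(X)$, and the portmanteau criterion then furnishes the required weak convergence of measures.

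I do not anticipate any serious obstacle. The only mild technical point is the compact-support condition on $\phi_k$, which is why I use the two-scale truncation on the slab $[\delta_k,2\delta_k]$ rather than the one-scale $\eta_{\delta_k}$ of Remark~\ref{BR-rem-4.4}(1); the extra factor of $2$ that this introduces in the mass estimate is absorbed precisely by the boundary-regularity identity $\mathscr{IM}(\Omega)=Per_\Omega(X)$. Everything else is a direct combination of the definitions of $\mathscr{IM}(\Omega)$ and $Per_\Omega(X)$, the $1$-Lipschitz character of $d(\cdot,X\setminus\Omega)$, and the standard lower semicontinuity of the BV energy.
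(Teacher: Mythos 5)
Your proof is correct and takes essentially the same approach as the paper: a distance-based cutoff together with the lower semicontinuity of the BV energy (your portmanteau step is exactly what the paper invokes as ``lower semi-continuity of $Per_\Omega$''). The only cosmetic difference is the cutoff geometry --- the paper transitions on $\Omega_{\delta^2}\setminus(\Omega_{\delta^2})_{\delta-\delta^2}\subset\Omega\setminus\Omega_\delta$ with denominator $\delta-\delta^2$, giving the mass bound $\mathscr{IM}_+(\Omega)$ directly and without subtraction, whereas your slab $[\delta_k,2\delta_k]$ forces the difference of ratios and hence uses the full limit $\mathscr{IM}(\Omega)$ rather than just $\mathscr{IM}_+(\Omega)$; under the boundary-regularity hypothesis both are available, so the two routes are equivalent here.
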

\begin{proof}
By the lower semi-continuity of $Per_\Omega$ in $L^1(X)$, it suffices to show that there is a sequence $\phi_k\in Lip_0(\Omega)$, $0\ls \phi_k\ls 1$, such that   $\phi_k \to \chi_\Omega$  in $L^1(X)$,   and
\begin{equation}\label{BR-equ-4.3}
 	\limsup_{k\to\infty}\int_X|\nabla \phi_k|{\rm d}\meas \ls Per_\Omega(X).
\end{equation}

  Let $\delta\in(0,1)$ and consider the function
\begin{equation*}
	\phi^\Omega_\delta(x):=
	\begin{cases}
		0 & x\in X\setminus\Omega_{\delta^2} ,\\
		\frac{d(x, X\setminus  \Omega_{\delta^2})}{\delta-\delta^2}, & x\in\Omega_{\delta^2}\setminus (\Omega_{\delta^2})_{\delta-\delta^2},\\
		1,\ & x\in(\Omega_{\delta^2})_{\delta-\delta^2}.
	\end{cases}
\end{equation*}
We have that  $\phi^\Omega_\delta\in Lip_0(\Omega)$  and
$\phi^\Omega_\delta(x)\to 1$ for each $x\in\Omega$, as $\delta\to0$. Hence $\phi^\Omega_\delta\to \chi_\Omega$ in $L^1(X)$.
By noticing $(\Omega_{\delta^2})_{\delta-\delta^2}\supset\Omega_\delta$,
$$  \limsup_{\delta\to0}\int_\Omega|\nabla \phi^\Omega_\delta|{\rm d}\meas\ls \limsup_{\delta\to0}\frac{\meas(\Omega\setminus\Omega_\delta)}{\delta-\delta^2}=\limsup_{\delta\to0}\frac{\meas(\Omega\setminus\Omega_\delta)}{\delta}.$$
It implies  (\ref{BR-equ-4.3}), by taking $\phi_k:=\phi^\Omega_{1/k}$.
\end{proof}

We now give the following version of the Gauss-Green formula for the boundary-regular domains, via an approximating argument.

\begin{proposition}
	\label{BR-prop-4.6}
 Let  $\Omega$ be a bounded, boundary-regular and open domain. Let $f\in W^{1,2}(\Omega)\cap C(\overline\Omega)$, $f\gs0$.  Suppose that $g\in W^{1,2}(\Omega)$ such that  ${\bf \Delta }g  $ is a Radon measure on $\Omega$ such that ${\bf \Delta}g\ls a\cdot  \meas$ for some function $a(x)\in L^1(\Omega)$. Assume $|\nabla g|(x)\ls h(x)$ $\meas$-a.e. $x\in \Omega$ for some upper semicontinuous function $h$  on $\overline\Omega$. Then it holds
 \begin{equation}\label{BR-equ-4.4}
 	-\int_\Omega fa{\rm d}\meas\ls \int_\Omega\ip{\nabla f}{\nabla g}{\rm d}\meas+\int_{\partial\Omega}fh{\rm d}Per_\Omega.
 \end{equation} 	
\end{proposition}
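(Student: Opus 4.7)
The plan is to test the distributional inequality ${\bf \Delta}g\leq a\,\meas$ against $f\phi_k$, where $\{\phi_k\}\subset Lip_0(\Omega)$ are the interior cutoffs constructed in Lemma \ref{BR-lem-4.5} (so $0\leq\phi_k\leq 1$, $\phi_k\to\chi_\Omega$ in $L^1(X)$, and $|\nabla\phi_k|\meas\rightharpoonup Per_\Omega$ weakly), and then pass to the limit $k\to\infty$. Since $f\in C(\overline\Omega)$ is bounded and $\phi_k$ has compact support in $\Omega$, the product $f\phi_k$ lies in $W^{1,2}_0(\Omega)$ and is a nonnegative test function against the functional ${\bf \Delta}g$ acting on $W^{1,2}_0(\Omega)$; the Leibniz rule gives $\nabla(f\phi_k)=\phi_k\nabla f+f\nabla\phi_k$ $\meas$-a.e.

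Using the integration-by-parts identity ${\bf \Delta}g(\phi)=-\int_\Omega\langle\nabla g,\nabla\phi\rangle\,d\meas$ for $\phi\in W^{1,2}_0(\Omega)$ together with ${\bf \Delta}g\leq a\,\meas$ applied to $\phi=f\phi_k\geq 0$, I would obtain
\begin{equation*}
-\int_\Omega\phi_k\langle\nabla f,\nabla g\rangle\,d\meas-\int_\Omega f\langle\nabla\phi_k,\nabla g\rangle\,d\meas\leq\int_\Omega af\phi_k\,d\meas.
\end{equation*}
Rearranging and bounding the second term on the left by the Cauchy--Schwarz inequality together with the hypothesis $|\nabla g|\leq h$ and the nonnegativity of $f$, this becomes
\begin{equation*}
-\int_\Omega af\phi_k\,d\meas\leq\int_\Omega\phi_k\langle\nabla f,\nabla g\rangle\,d\meas+\int_\Omega fh\,|\nabla\phi_k|\,d\meas.
\end{equation*}

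Finally I let $k\to\infty$. The two bulk terms converge, by dominated convergence, to $-\int_\Omega af\,d\meas$ and $\int_\Omega\langle\nabla f,\nabla g\rangle\,d\meas$, using $af\in L^1(\Omega)$, $\langle\nabla f,\nabla g\rangle\in L^1(\Omega)$ (as both $|\nabla f|,|\nabla g|\in L^2(\Omega)$), $0\leq\phi_k\leq 1$, and $\phi_k\to 1$ $\meas$-a.e. in $\Omega$. The decisive step is the boundary term: since $f\in C(\overline\Omega)$ with $f\geq 0$ and $h$ is upper semicontinuous on the compact set $\overline\Omega$, the product $fh$ is upper semicontinuous and bounded above on $\overline\Omega$. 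The weak convergence $|\nabla\phi_k|\meas\rightharpoonup Per_\Omega$ then yields, via the Portmanteau-type inequality for upper semicontinuous test functions,
\begin{equation*}
\limsup_{k\to\infty}\int_\Omega fh\,|\nabla\phi_k|\,d\meas\leq\int_{\overline\Omega}fh\,dPer_\Omega=\int_{\partial\Omega}fh\,dPer_\Omega,
\end{equation*}
where the last equality uses that $Per_\Omega$ is concentrated on $\partial\Omega$. Putting the three limits together yields \eqref{BR-equ-4.4}. The main obstacle is precisely this last passage to the limit, since $fh$ need not be continuous; the standard remedy is to approximate $fh$ from above on the compact set $\overline\Omega$ by a decreasing sequence of continuous bounded functions $u_n\downarrow fh$, apply the (continuous) weak convergence to each $u_n$, and then invoke monotone convergence with respect to the finite measure $Per_\Omega$ to conclude.
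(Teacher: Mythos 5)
Your proof is correct and follows essentially the same route as the paper's: test against $f\phi_k$ with the interior cutoffs from Lemma \ref{BR-lem-4.5}, apply the Leibniz rule and the bound $|\nabla g|\leq h$ together with $f\geq 0$, then pass to the limit using dominated convergence for the bulk terms and upper semicontinuity of $fh$ against the weak convergence $|\nabla\phi_k|\meas\rightharpoonup Per_\Omega$ for the boundary term. Your small presentational variant of invoking ${\bf\Delta}g\leq a\meas$ before taking limits (so that all passages to the limit are against $\meas$ or $Per_\Omega$, not the signed measure ${\bf\Delta}g$) and your explicit monotone-approximation argument for the usc Portmanteau step are both welcome clarifications of steps the paper compresses.
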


\begin{proof}
	Let $\phi_k$ be a sequence in $Lip_0(\Omega)$ given in Lemma \ref{BR-lem-4.5}. Then $\phi_k f\in W^{1,2}_0(\Omega)$. Since $g\in W^{1,2}(\Omega)$, from the definition of ${\bf \Delta} g$, we have
	\begin{equation}\label{BR-equ-4.5}
		\begin{split}
			-\int_\Omega\phi_k f{\rm d}{\bf\Delta}g&=\int_\Omega\ip{\nabla(f\phi_k)}{\nabla g}{\rm d}\meas\\
			&=\int_\Omega \phi_k\ip{\nabla f}{\nabla g}{\rm d}\meas+ \int_\Omega f\ip{\nabla \phi_k }{\nabla g}{\rm d}\meas	\\
			&\ls \int_\Omega \phi_k\ip{\nabla f}{\nabla g}{\rm d}\meas+ \int_\Omega fh\cdot|\nabla \phi_k |  {\rm d}\meas,	
					\end{split}
	\end{equation}
	where, in the last inequality, we have used $f\gs 0$ and $|\nabla g|\ls h$ a.e. in $\Omega$.

	The fact $\phi_k\to \chi_\Omega$ in $L^1(X)$ implies that there exists a subsequence of $\phi_k$, denoted by $\phi_k$ again, such that $\phi_k(x)\to \chi_\Omega(x)$ for $\meas$-almost all $x\in X$. We have $|\phi_k f|\ls \max_{\overline\Omega}|f|$ for each $k$,  and the measure ${\bf \Delta}g\ls a\cdot \meas$ on $\Omega$. By Lebesgue's dominated convergence theorem, we get
	\begin{equation*}
		\lim_{k\to\infty}\int_{\Omega} \phi_k f{\rm d}{\bf \Delta}g\ls \lim_{k\to\infty}\int_{\Omega} \phi_k fa{\rm d}\meas= \int_\Omega \lim_{k\to\infty}(\phi_k fa){\rm d}\meas=\int_\Omega  fa{\rm d}\meas.	\end{equation*}
	Considering the right-hand side of \eqref{BR-equ-4.5}, since $0\ls \phi_k\ls 1$ and $\ip{\nabla f}{\nabla g}\in L^1(\Omega)$, Lesbegue dominated converge theorem yields $$\int_\Omega\phi_k\ip{\nabla f}{\nabla g}{\rm d}\meas\to \int_\Omega\ip{\nabla f}{\nabla g}{\rm d}\meas$$ as $k\to \infty.$ At last by using $ fh\gs0 $,  the assumption that $fh$ is upper semicontinuous on $\overline\Omega$, and  Lemma \ref{BR-lem-4.5},
 	we have
 $$\limsup_{k\to\infty}\int_\Omega fh|\nabla \phi_k|{\rm d}\meas=\limsup_{k\to\infty}\int_{\overline\Omega}fh|\nabla \phi_k|{\rm d}\meas\ls \int_{\partial \Omega}fh{\rm d}Per_\Omega,$$ where we have used the fact that $Per_\Omega$ is supported in $\partial\Omega.$ Now letting $k\to\infty$ in (\ref{BR-equ-4.5}),  the desired estimate \eqref{BR-equ-4.4} follows.
\end{proof}

\begin{remark}
If $g$ can be extended to a function $g^*$ on $X$ such that ${\bf \Delta}g^*$ is a finite Radon measure on $X$, then the assumption ${\bf \Delta}g\ls a\cdot \meas$ for some $a\in L^1(\Omega)$ is not necessary. Indeed, in this case, since $\phi_k\to \chi_\Omega$ for {\emph{each}} $x\in X$, the Lebugues dominated convergence theorem states
$$\lim_{k\to\infty}\int_\Omega\phi_kf{\rm d}{\bf \Delta}g=\lim_{k\to\infty}\int_X\phi_kf{\rm d}{\bf \Delta}g^*=\int_X f{\rm d}{\bf \Delta}g^*.$$
	The conlusion (\ref{BR-equ-4.4})
	will be replaced by
 	$$-\int_X f {\rm d}{\bf \Delta}g^*\ls \int_\Omega\ip{\nabla f}{\nabla g}{\rm d}\meas+\int_{\partial\Omega}fh{\rm d}Per_\Omega.$$
 	\end{remark}
	
Recall that the version of the Gauss-Green formula given in (\ref{equation-1-12})--(\ref{equation-1-13}) has been extended to the $RCD$ setting in \cite{BPS23-jems, BPS23, BCM22}. For convenience, here we state only the case where the vector field is a gradient field.
\begin{theorem}[\cite{BPS23}]\label{BR-thm-4.1}
Let $(X, d, \meas)$ be an $RCD(K,N)$ space for some $K\in \mathbb R$ and $N\in(1,\infty)$. Let $\Omega$ be a set of finite perimeter and let $g$ be a Lipschitz function on $X$ such that its distribution Laplacian ${\bf \Delta}g$ is a finite Radon measure on $X$. Then there exists a function  $(\nabla g,\nu_\Omega)_{\rm in}$   in  $L^\infty(\mathscr F\Omega, Per_\Omega)$  such that
\begin{align}
	\label{BR-equ-4.1}		
	-\int_{\Omega^{(1)}} \varphi {\rm d}{\bf \Delta}g & = \int_\Omega\ip{\nabla \varphi}{\nabla g}{\rm d}\meas+\int_{\mathscr F\Omega}\varphi(\nabla g,\nu_\Omega)_{\rm in}{\rm d}Per_\Omega
\end{align}
for any $\varphi\in Lip_0(X)$, and
		\begin{align}
	 \label{BR-equ-4.2}
		 \|(\nabla g,\nu_\Omega)_{\rm in}\|_{L^\infty(\mathscr F\Omega,Per_\Omega)}&\ls \||\nabla g|\|_{L^\infty(\Omega,\meas)}.
\end{align}					
\end{theorem}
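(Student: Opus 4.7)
\medskip
\noindent\textbf{Proof proposal.} The plan is to construct the normal trace $(\nabla g,\nu_\Omega)_{\rm in}$ as a weak$^\ast$ limit of the tangential pairings against a Lipschitz approximation of $\chi_\Omega$, and then read off the Gauss--Green identity \eqref{BR-equ-4.1} from the resulting integration-by-parts. First I would fix, by the very definition of perimeter, a sequence $\eta_k\in Lip(X)$ with $0\le \eta_k\le 1$ and compact support such that $\eta_k\to \chi_\Omega$ in $L^1(X)$ and $|\nabla \eta_k|\meas \rightharpoonup Per_\Omega$ weakly as Radon measures (one may in fact take the truncated distance $\eta_k:=\min\{1,k\,d(\cdot,X\setminus\Omega)\}$ after a diagonal argument, exactly as in Remark~\ref{BR-rem-4.4}). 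By passing to a subsequence, $\eta_k$ may also be arranged to converge quasi-everywhere to the quasi-continuous representative $\chi_{\Omega^{(1)}}$.

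Next, for any test function $\varphi\in Lip_0(X)$, the product $\varphi\eta_k$ lies in $Lip_0(X)\cap W^{1,2}(X)$, so the distributional Laplacian of $g$ gives
\begin{equation*}
\int_X \varphi\eta_k\,{\rm d}{\bf\Delta}g \;=\; -\int_X\langle\nabla(\varphi\eta_k),\nabla g\rangle\,{\rm d}\meas \;=\; -\int_\Omega \eta_k\langle\nabla\varphi,\nabla g\rangle\,{\rm d}\meas \;-\; \int_X \varphi\langle\nabla\eta_k,\nabla g\rangle\,{\rm d}\meas.
\end{equation*}
The first integral on the right converges to $-\int_\Omega\langle\nabla\varphi,\nabla g\rangle\,{\rm d}\meas$ by dominated convergence, since $|\nabla\varphi|\cdot|\nabla g|\in L^1(\Omega)$. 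For the left-hand side, because ${\bf\Delta}g$ is a finite Radon measure on $X$ that does not charge capacity-zero sets, the quasi-everywhere convergence $\eta_k\to\chi_{\Omega^{(1)}}$ (together with the uniform bound $0\le\eta_k\le1$) yields $\int_X\varphi\eta_k\,{\rm d}{\bf\Delta}g\to\int_{\Omega^{(1)}}\varphi\,{\rm d}{\bf\Delta}g$; the point is that any mass of $|{\bf\Delta}g|$ sitting on the measure-theoretic boundary $\partial^*\Omega$ gets captured in the limit by the density-$1$ part rather than split with $\Omega^{(0)}$.

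It remains to analyse the boundary term $T_k(\varphi):=-\int_X\varphi\langle\nabla\eta_k,\nabla g\rangle\,{\rm d}\meas$. Since $|\langle\nabla\eta_k,\nabla g\rangle|\le \||\nabla g|\|_{L^\infty(\Omega)}\cdot|\nabla \eta_k|$ a.e., the signed measures $\langle\nabla\eta_k,\nabla g\rangle\meas$ have uniformly bounded total variation and, by weak$^\ast$ compactness, admit a subsequential limit $\sigma$ as Radon measures on $X$. Because $\eta_k$ is eventually constant off any neighbourhood of $\partial\Omega$, $\sigma$ is supported in $\partial\Omega$; and the domination $|\sigma|\le \||\nabla g|\|_{L^\infty(\Omega)}\cdot Per_\Omega$ shows $\sigma$ is absolutely continuous with respect to $Per_\Omega$ and concentrated on $\mathscr F\Omega$. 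Writing $\sigma=-(\nabla g,\nu_\Omega)_{\rm in}\cdot Per_\Omega$ defines the trace function, satisfying the bound \eqref{BR-equ-4.2}; and passing to the limit $k\to\infty$ in the displayed equation above gives \eqref{BR-equ-4.1}. Uniqueness of the trace (independence of the chosen $\eta_k$) is then automatic by varying $\varphi\in Lip_0(X)$, since every other term in \eqref{BR-equ-4.1} is intrinsic.

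The main obstacle I expect is the passage to the limit on the left-hand side, specifically justifying that $\eta_k\to\chi_{\Omega^{(1)}}$ in a mode strong enough to integrate against ${\bf\Delta}g$: one needs that ${\bf\Delta}g$ vanishes on the (potentially non-$Per_\Omega$-negligible from the ${\bf\Delta}g$ viewpoint) set $\partial^*\Omega$, or at least that the sequence of truncations sees only the $\Omega^{(1)}$-side of this boundary. This uses in an essential way that $g$ is Lipschitz with finite-Radon-measure Laplacian on $X$, which forces ${\bf\Delta}g$ to be diffuse with respect to $2$-capacity, and that the Lipschitz approximation $\eta_k$ can be chosen to have $2$-quasicontinuous representatives converging to $\chi_{\Omega^{(1)}}$ quasi-everywhere. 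The remaining technicalities (extracting a genuine limit rather than a subsequential one, and identifying the density with respect to $Per_\Omega$) are handled by the uniqueness argument together with the Radon--Nikodym theorem applied to $\sigma\ll Per_\Omega$.
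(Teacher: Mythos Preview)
The paper does not prove this theorem; it is quoted from \cite{BPS23} and used as a black box (see the surrounding text and the reference list). So there is no ``paper's own proof'' to compare against. What I can do is assess whether your outline would actually establish the result.

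There is a genuine gap, and it is precisely the tension the paper itself is organized around. You need your approximating sequence $\eta_k$ to satisfy three things simultaneously: (a) $|\nabla\eta_k|\meas \rightharpoonup Per_\Omega$ weakly (so that the limiting boundary measure is exactly $Per_\Omega$ and you get the sharp bound \eqref{BR-equ-4.2}); (b) ${\rm supp}|\nabla\eta_k|\subset\Omega$ (so that the domination by $\||\nabla g|\|_{L^\infty(\Omega)}$ is legitimate); and (c) pointwise convergence $\eta_k\to\chi_{\Omega^{(1)}}$ in a mode integrable against ${\bf\Delta}g$. Your proposed choice $\eta_k=\min\{1,k\,d(\cdot,X\setminus\Omega)\}$ satisfies (b), but for a general set of finite perimeter it only gives $\liminf_k\int|\nabla\eta_k|\,{\rm d}\meas=\mathscr{IM}_-(\Omega)\gs Per_\Omega(X)$, and the inequality can be strict (this is exactly the content of Definition~\ref{BR-def-4.3} and Remark~\ref{BR-rem-4.4}). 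So (a) fails, and your limit measure $\sigma$ is only dominated by a possibly larger measure, losing the constant $1$ in \eqref{BR-equ-4.2}. Moreover, for this interior approximation one has $\eta_k\equiv 0$ on $\partial\Omega$, so the left-hand side converges to $\int_\Omega\varphi\,{\rm d}{\bf\Delta}g$, not to $\int_{\Omega^{(1)}}\varphi\,{\rm d}{\bf\Delta}g$; these differ by the mass ${\bf\Delta}g$ places on $\partial\Omega\cap\Omega^{(1)}$, which your capacity argument does not exclude (the reduced boundary has positive $2$-capacity in general, so absolute continuity of ${\bf\Delta}g$ with respect to capacity buys nothing here). Conversely, a generic $BV$-approximating sequence from the definition of perimeter will give (a), but then you lose (b) and (c): there is no reason its gradient lives in $\Omega$, nor that it converges pointwise to $\chi_{\Omega^{(1)}}$ rather than, say, to $\chi_{\Omega^{(1)}}+\tfrac12\chi_{\mathscr F\Omega}$.

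The actual argument in \cite{BPS23} does not proceed by such a naive approximation; it relies on the fine structure theory of the reduced boundary in $RCD(K,N)$ spaces (rectifiability, blow-ups to half-spaces, and the tangent-module calculus) to build the normal trace $(\nabla g,\nu_\Omega)_{\rm in}$ directly and then verify the integration-by-parts identity. The paper you are reading makes this distinction explicit: Proposition~\ref{BR-prop-4.6} carries out your approximation idea successfully \emph{only} under the extra hypothesis of boundary regularity, while Proposition~\ref{BR-prop-1.12} handles the general case at the price of a multiplicative constant $c/\gamma$---and that proposition is precisely what the authors need, not Theorem~\ref{BR-thm-4.1} in full strength. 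If you want to prove Theorem~\ref{BR-thm-4.1} itself, you will have to engage with the machinery of \cite{BPS23-jems,BPS23}.
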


Proposition \ref{BR-prop-4.6} requires that  ${\bf \Delta} g$ is a Radon measure on $\Omega$ merely,  after assuming that $\Omega$ is boundary-regular.   Nevertheless, Theorem  \ref{BR-thm-4.1}  requires that  ${\bf \Delta}g$ is a Radon measure on the whole space $X$ (without the assumption that $\Omega$ is boundary-regular). This suggests the following definition.

\begin{defn}\label{BR-def-4.7}
A bounded open domain  $\Omega\subset X$  is called a {$\mathcal{DM}^\infty$-\emph{extentable domain}} \footnote{The notion $\mathcal{DM}^\infty$ is based on the fact that $\nabla g$ is a bounded divergence-measure field.}
  if for each     $g\in Lip(\Omega)$ such that ${\bf\Delta}g$ is a finite Radon measure on $\Omega$, there exists     $g^*\in Lip(X)$ such that ${\bf \Delta}g^*$ is a finite Radon measure on $X$ and $g^*=g$ on $\Omega$. Such a function $g^*$ is called a {$\mathcal{DM}^\infty$-\emph{extension}} of $g$.
\end{defn}
Remark that if there exists an open domain $\Omega'\Supset \Omega$ and a function $g^*\in Lip(\Omega')$ such that ${\bf \Delta}g^*$ is a finite Radon measure on $\Omega'$ and $g^*=g$ on $\Omega$, then  $g$ has a  $\mathcal{DM}^\infty$-extension on $X$. Indeed, take any $\eta\in {\rm Test}^\infty_c(X)$ such that $\eta\equiv1$ on $\Omega$ and ${\rm supp}(\eta)\subset\Omega',$ then $g^*\cdot \eta$ is the desired extension of $g$ on $X$. Here $g^*\cdot \eta$ takes zero out of $\Omega'$.

 In the following, we want to investigate the geometric condition to ensure that an open set of finite perimeter is boundary-regular.

\begin{lemma}
	\label{BR-lem-4.9}
 Let $\Omega$ be a bounded open set of finite perimeter. If the function $\rho_{\rm e}(x):=d(x,X\setminus \Omega)$ has a $\mathcal{DM}^\infty$-extension, denoted by $g$, and if ${\bf \Delta}g\big(\partial\Omega\cap \Omega^{(1)} \big) =0$, then $\Omega$ is boundary-regular.
 \end{lemma}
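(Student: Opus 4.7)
My plan is to apply the Gauss-Green formula of Theorem~\ref{BR-thm-4.1} twice to the $\mathcal{DM}^\infty$-extension $g$ of $\rho_{\rm e}$. Since Remark~\ref{BR-rem-4.4}(1) gives $Per_\Omega(X)\le \mathscr{IM}_-(\Omega)$ for free, the task reduces to the reverse bound $\mathscr{IM}_+(\Omega)\le Per_\Omega(X)$.

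For the first application, I would pick $\eta\in Lip_0(X)$ with $\eta\equiv 1$ on $\overline\Omega$ and apply \eqref{BR-equ-4.1}--\eqref{BR-equ-4.2}. The bulk term $\int_\Omega\langle\nabla \eta,\nabla g\rangle\,d\meas$ vanishes because $\nabla\eta\equiv 0$ on $\Omega$, and the boundary term is bounded by $\||\nabla g|\|_{L^\infty(\Omega)}\cdot Per_\Omega(X)\le Per_\Omega(X)$, since $g=\rho_{\rm e}$ is $1$-Lipschitz on $\Omega$. The decomposition $\Omega^{(1)}=\Omega\cup(\partial\Omega\cap\Omega^{(1)})$ together with the standing hypothesis ${\bf \Delta}g(\partial\Omega\cap\Omega^{(1)})=0$ yields ${\bf \Delta}g(\Omega^{(1)})={\bf \Delta}g(\Omega)$, so this first step produces $-{\bf \Delta}g(\Omega)\le Per_\Omega(X)$, which I label $(\star)$.

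For the second application, I would test against $\psi_\delta(x):=\min\{1,\rho_{\rm e}(x)/\delta\}$. This function is Lipschitz, supported in the compact set $\overline\Omega$, hence in $Lip_0(X)$, and vanishes on $\partial\Omega\supset \mathscr F\Omega$ because $\rho_{\rm e}\equiv 0$ on $\overline{X\setminus\Omega}$; the boundary integral in \eqref{BR-equ-4.1} thus drops. Using the chain rule $\nabla \psi_\delta = \delta^{-1}\chi_{\{0<\rho_{\rm e}<\delta\}}\nabla \rho_{\rm e}$ and $\nabla g=\nabla \rho_{\rm e}$ $\meas$-a.e.\ on $\Omega$, together with ${\bf \Delta}g(\partial\Omega\cap\Omega^{(1)})=0$ to replace $\int_{\Omega^{(1)}}$ by $\int_\Omega$, I arrive at
\[
\frac{1}{\delta}\int_{\Omega\setminus\Omega_\delta}|\nabla \rho_{\rm e}|^2\,d\meas \;=\; -\int_\Omega \psi_\delta\,d{\bf \Delta}g.
\]
Since $\psi_\delta\to\chi_\Omega$ pointwise with $|\psi_\delta|\le 1$ and ${\bf \Delta}g$ has finite total variation, dominated convergence sends the right-hand side to $-{\bf \Delta}g(\Omega)$. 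Invoking the eikonal identity $|\nabla \rho_{\rm e}|=1$ $\meas$-a.e.\ on $\Omega$, a standard property of distance functions from a closed set in the $RCD$ category, one may replace $|\nabla \rho_{\rm e}|^2$ by $1$ in the left-hand side; combining with $(\star)$ then gives $\mathscr{IM}_+(\Omega)=-{\bf \Delta}g(\Omega)\le Per_\Omega(X)$, completing the proof.

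The main subtle point I expect is the use of the eikonal identity $|\nabla\rho_{\rm e}|=1$ $\meas$-a.e.: the Gauss-Green computation produces only the weighted integral $\int_{\Omega\setminus\Omega_\delta}|\nabla\rho_{\rm e}|^2\,d\meas$, and without equality to $1$ one cannot pass back to the plain Minkowski ratio $\meas(\Omega\setminus\Omega_\delta)/\delta$. The $Lip_0(X)$ membership of $\psi_\delta$ and the chain-rule identification of $\nabla\psi_\delta$ are routine in the $RCD$ setting but should be noted explicitly.
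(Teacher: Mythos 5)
Your proof is correct and follows essentially the same route as the paper's: the paper applies the Gauss--Green formula of Theorem~\ref{BR-thm-4.1} once to the single test function $\phi f_\delta$ (where, on $\overline\Omega$, $f_\delta = 1-\psi_\delta$ in your notation), whereas you split this by linearity into two applications, one against a cutoff $\eta\equiv 1$ near $\overline\Omega$ yielding $(\star)$ and one against $\psi_\delta$. Both arguments hinge on the same key identity $\ip{\nabla\rho_{\rm e}}{\nabla\rho_{\rm e}}=1$ $\meas$-a.e.\ in $\Omega$ that you correctly flag as the subtle point; the paper invokes it silently at the corresponding step.
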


\begin{proof}
Fix any $\delta\in(0,1)$, and put
$$f_\delta(x):=\begin{cases}
	0 & x\in \Omega_\delta,\\
	1-\frac{\rho_{\rm e}(x)}{\delta} & x\in \Omega\setminus\Omega_\delta,\\
	1& x\in X\setminus\Omega.
\end{cases}
	$$
Take a cut-off function $\phi\in {\rm Test}^\infty_c(X)$ such that $0\ls \phi(x)\ls 1$ in $X$ and that $\phi(x)\equiv 1$ in a bounded open subset $\Omega'\supset   \overline\Omega.$   Then $\phi f_\delta\in Lip_0(X),$  $\phi g \in Lip_0(X)$ and
$${\bf \Delta}(\phi g)=\big(g  \Delta\phi+2\ip{\nabla \phi}{\nabla g}\big)\cdot \meas+\phi\cdot {\bf \Delta}g,$$
which is a finite Radon measure on $X$ since ${\rm supp}(\phi)$ is compact and that $g\Delta\phi,\ip{\nabla \phi}{\nabla g} $ are in $L^\infty(X).$
By using the Gauss-Green formula (\ref{BR-equ-4.1}) to $\phi f_\delta $ and $\phi g$ on $\Omega$, we have
\begin{equation}\label{BR-equ-4.7}
	\begin{split}
		 -\int_{\Omega^{(1)}} \phi f_{\delta}\ {\rm d}{\bf  \Delta}(\phi g)  & = \int_\Omega\ip{\nabla (\phi f_\delta)}{\nabla (\phi g)}{\rm d}\meas +\int_{\mathscr F\Omega}\phi f_\delta (\nabla (\phi g),\nu)_{\rm in}{\rm d}Per_\Omega\\
		 &\ls  \int_\Omega\ip{\nabla   f_\delta }{\nabla   g }{\rm d}\meas + \int_{\mathscr F\Omega}|\phi f_\delta|\ {\rm d} Per_\Omega(X)\cdot  \|\nabla  g \|_{L^\infty(\Omega)} \\
		 	 &\ls -\frac{\meas(\Omega\setminus\Omega_\delta)}{\delta}  + Per_\Omega(X) ,
		 	 	\end{split}
\end{equation}	
where we have used
$$\ip{\nabla (\phi f_\delta)}{\nabla(\phi g)}=\ip{\nabla f_\delta}{\nabla g}=\frac{-\ip{\nabla \rho_{\rm e}}{\nabla \rho_{\rm e}} }{\delta}\cdot\chi_{\Omega\setminus\Omega_\delta}=\frac{-\chi_{\Omega\setminus\Omega_\delta}}{\delta}, \ \ \ \meas{\rm-a.e.\ in }\ \ \Omega $$
since $\phi\equiv1$ on $\Omega'\supset\overline \Omega$ and $g=\rho_{\rm e}$ on $\Omega$,
  $$\|(\nabla (\phi g),\nu)_{\rm in}  \|_{L^\infty(\partial\Omega, Per_\Omega)}\ls \|\nabla(\phi g)\|_{L^\infty(\Omega)}= \|\nabla  g \|_{L^\infty(\Omega)}=1$$
  by (\ref{BR-equ-4.2}), and used $|\phi f_\delta |\ls 1 $ on $X$.

 We now consider the left-hand side of (\ref{BR-equ-4.7}). Since $\phi\equiv1$ on $\Omega'$ and $\Omega^{(1)}\subset\overline\Omega\subset \Omega'$, we have the measure
 $\big({\bf \Delta}(\phi g)\big)|_{\Omega^{(1)}}= ({\bf \Delta} g)|_{\Omega^{(1)}}.$ Hence we get
  $$ \int_{\Omega^{(1)}} \phi f_{\delta}\ {\rm d}{\bf  \Delta}(\phi g) =   \int_{\Omega^{(1)}}  f_{\delta}\ {\rm d}{\bf  \Delta}g= \int_{\Omega^{(1)}\setminus\Omega}  f_{\delta}\ {\rm d}{\bf  \Delta}g+\int_{ \Omega}  f_{\delta}\ {\rm d}{\bf  \Delta}g,$$
  where we have used $\Omega\subset \Omega^{(1)}$ (since $\Omega$ is open).
 From the definition of $f_\delta$, we have
 $$\lim_{\delta\to0}   f_\delta(x)	=0\ \  {\rm for\ each}\ \ x\in\Omega.$$
 Since $({\bf \Delta}g)|_{\Omega}$ is finite and $|f_\delta|\ls1$ for each $\delta\in(0,1)$,  the Lebesgue's dominated convergence theorem implies
 \begin{equation}
 	\label{BR-equ-4.8}	
 	\lim_{\delta\to0}\int_{\Omega}   f_\delta\ {\rm d}{\bf\Delta}g =0.
 	\end{equation}
Notice that  $  f_\delta \equiv 1$ on $\partial\Omega\supset \Omega^{(1)}\setminus\Omega$  for each $\delta>0$, we have
$$   \int_{ \Omega^{(1)}\setminus\Omega}  f_\delta\ {\rm  d}{\bf\Delta}g= {\bf\Delta}g\big(\Omega^{(1)}\setminus\Omega\big).$$
 Combining these and  (\ref{BR-equ-4.7}),  then letting $\delta\to 0$ and substituting (\ref{BR-equ-4.8}),
it follows
 \begin{equation*}
	-{\bf\Delta}g\big(\Omega^{(1)}\setminus\Omega\big)+ \mathscr{IM}_+(\Omega)\ls    Per_\Omega(X).
		\end{equation*}	
Notice that $\partial\Omega\cap \Omega^{(1)}=\Omega^{(1)}\setminus \Omega$. The assumption ${\bf\Delta}g\big(\partial \Omega\cap \Omega^{(1)} \big)=0$ gives $\mathscr{IM}_+(\Omega)\ls    Per_\Omega(X)$, and hence $\Omega$ is boundary-regular. The proof is finished.
\end{proof}

		\begin{remark}\label{BR-rem-4.8}
	The existence of a $\mathcal{DM}^\infty$-extension of $g$ is a non-trivial condition. In the Euclidean setting $X=\mathbb R^N$, it was shown \cite[Theorem 4.2]{CLT20} that any open set $\Omega\subset \mathbb R^N$ satisfying $\mathscr H^{N-1}(\partial \Omega\setminus\Omega^{(0)})<\infty$ is $\mathcal{DM}^\infty$-extentable. The argument in \cite{CLT20} is based on the convolutions with the standard mollifiers.
		\end{remark}

\subsection{A version of the Gauss-Green formula on sets with one-sided approximations}

The key property of boundary-regular open sets is the existence of interior approximations in Lemma \ref{BR-lem-4.5}. In this subsection, we will extend the interior approximations to the general sets of finite perimeter.

We first consider the approximations for general $BV$ functions. Let $f\in BV(X)$. By the definition, there exists a sequence of $f_j\in Lip_{\rm loc}(X)$ such that
$$f_j\overset{L^1(X)}{\to } f\quad{\rm and}\quad  \int_X|\nabla f_j|{\rm d}\meas\to |Df|(X).$$
The problem with this approximation is that we do not have a pointwise estimate of $f_j$. We will recall the construction of approximating sequences of $f$ in \cite{KKST14, KLLS19}.

We need some notations. Let $f\in BV(X)$, the   lower and upper approximate limits of $f$ at $x$ are given by
$$f^{\wedge }(x):=\sup\left\{t\in\mathbb R \ \big|\ \lim_{r\to0}\frac{\meas(B_r(x)\cap \{f<t\})}{\meas(B_r(x))}=0\right\}$$
and
$$f^{\vee}(x):=\inf\left\{t\in\mathbb R \ \big|\ \lim_{r\to0}\frac{\meas(B_r(x)\cap \{f>t\})}{\meas(B_r(x))}=0\right\}.$$
The approximate jump set of $f$ is
$$S_f:=\{f^\wedge < f^\vee\}.$$

 We need the following approximations for $BV$ functions, given essentially in \cite{KKST14}.
\begin{proposition}[\cite{KKST14}]\label{BR-prop-4.10}
	Let $f\in BV(X)\cap L^\infty(X)$. Assume that $\{ f^\vee\not=0\}\cup \{f^\wedge\not=0\}$ is contained in a ball $B_R(z_0)$ with radius $R$.
	 Then there exist a sequence of functions $f_k\in Lip_{0}(B_{2R}(z_0))$, $k=1,2,\cdots,$ such that
	\begin{itemize}
		\item [(i)]  $f_k\to f$ in $L^1(X)$ as $k\to\infty$.
		\item [(ii)] For any $x_0\in X\setminus S_f$, $\lim_{k\to\infty}f_k(x_0)= f(x_0)$.
		\item [(iii)] Let $x_0\in S_f$. Suppose that there exists $t\in (0,1)$   such that  the set     $E_{t^*}:=\{-f>t^*\}$ is of locally finite perimeter,
and that
 $$\liminf_{r\to0}\frac{\meas\big(E_{t^*}\cap B_r(x_0)\big)}{\meas\big(B_r(x_0)\big)} \gs \gamma$$
  for some $\gamma\in(0,1)$, where
 $$t^*:=-\big(f^\wedge(x_0)-f^\vee(x_0)\big)\cdot t-f^\vee(x_0).$$
 Then it holds that
  $$\limsup_{k\to\infty}f_k(x_0)\ls \left(1-\frac{\gamma}{tc}\right)\cdot  f^\vee(x_0)+  \frac{\gamma}{tc}\cdot f^\wedge(x_0),$$
 where the constant $c$ depends only on $N,K,R$.
 	 		\item [(iv)] there exists $c:=c_{N,K,R}>0$ such that
 	 		$$\limsup_{k\to\infty}\int_X\eta |\nabla f_k|{\rm d}\meas\ls c\int_X\eta {\rm d} |Df|,\ \quad \forall \eta \in Lip_0(X), \ \eta\gs0.$$
 	 	\end{itemize}
	\end{proposition}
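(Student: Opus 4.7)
The plan is to construct $f_k$ via a discrete convolution of $f$ at scales $r_k \downarrow 0$, following the scheme in \cite{KKST14, KLLS19}, and then to extract the four assertions from the Lebesgue differentiation theory for $BV$ functions together with the weak $(1,1)$-Poincar\'e inequality (\ref{equ-add-Poincare-ineq}). Fix $r_k \downarrow 0$. Take a maximal $r_k$-separated net $\{x_i^k\}_{i}$ inside $B_{3R/2}(z_0)$, so that $\{B_{r_k}(x_i^k)\}_i$ covers $B_{3R/2}(z_0)$ and $\{B_{2r_k}(x_i^k)\}_i$ has bounded overlap $N_0 = N_0(N, K, R)$ by Bishop--Gromov (\ref{BR-equ-2.3}). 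Let $\{\phi_i^k\}_i \subset Lip(X)$ be a subordinate partition of unity with $|\nabla \phi_i^k| \ls C/r_k$ and $\sum_i \phi_i^k \equiv 1$ on $B_{3R/2}(z_0)$; pick a cut-off $\eta_k \in Lip_0(B_{2R}(z_0))$ with $\eta_k \equiv 1$ near $\overline{B_{5R/4}(z_0)}$, and set
$$ f_k \ := \ \eta_k \sum_i \phi_i^k \, f_{B_{r_k}(x_i^k)} \ \in \ Lip_0(B_{2R}(z_0)).$$
Since $\{f^\vee \neq 0\} \cup \{f^\wedge \neq 0\} \subset B_R(z_0)$, for large $k$ only indices $i$ with $x_i^k$ close to $B_R(z_0)$ contribute non-trivial averages.

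Assertion (i) will follow from the $(1,1)$-type Poincar\'e inequality for $BV$ functions applied to each $B_{r_k}(x_i^k)$, yielding $\int_{B_{r_k}(x_i^k)} |f - f_{B_{r_k}(x_i^k)}| \,{\rm d}\meas \ls C r_k |Df|(B_{Cr_k}(x_i^k))$; summing over $i$ with bounded overlap gives $\|f_k - f\|_{L^1} \ls C r_k |Df|(X) \to 0$. For assertion (ii), the Lebesgue differentiation theorem at $x_0 \notin S_f$ gives $f_{B_r(x_0)} \to f^\vee(x_0) = f^\wedge(x_0) = f(x_0)$, and the Bishop--Gromov comparison $\meas(B_{3r_k}(x_0)) \ls C_{N,K,R}\,\meas(B_{r_k}(x_i^k))$ transfers this to the off-center averages $f_{B_{r_k}(x_i^k)}$ whenever $x_0 \in B_{2r_k}(x_i^k)$, hence $f_k(x_0) \to f(x_0)$. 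Assertion (iv) exploits the identity $\sum_i \nabla \phi_i^k \equiv 0$ on $\{\eta_k = 1\}$, which yields the pointwise bound $|\nabla f_k|(x) \ls (C/r_k) \fint_{B_{Cr_k}(x)} |f - f_{B_{Cr_k}(x)}| \,{\rm d}\meas \ls C|Df|(B_{Cr_k}(x))/\meas(B_{r_k}(x))$; integrating against $\eta$ and invoking a standard tessellation/cover argument  identifies the limit of $|\nabla f_k| \cdot \meas$ with a measure controlled by $c \, |Df|$.

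The main obstacle is assertion (iii). Set $a := f^\wedge(x_0)$ and $b := f^\vee(x_0)$; then $E_{t^*} = \{-f > t^*\}$ coincides with $\{f < (1-t)b + t a\}$. Represent $f_k(x_0)$ as a mollification
$$ f_k(x_0) \ = \ \int_X \psi_k(y) \, f(y)\, {\rm d}\meas(y), \qquad \psi_k(y)\ :=\ \sum_i \phi_i^k(x_0) \, \frac{\chi_{B_{r_k}(x_i^k)}(y)}{\meas(B_{r_k}(x_i^k))},$$
so that $\psi_k \gs 0$, $\int \psi_k \,{\rm d}\meas = 1$, ${\rm supp}(\psi_k) \subset B_{3r_k}(x_0)$, and by doubling $\|\psi_k\|_\infty \ls C_{N,K,R}/\meas(B_{3r_k}(x_0))$. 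Splitting $B_{3r_k}(x_0)$ into $E_{t^*}$, its complement outside the small-density set $\{f > b+\epsilon\}$, and $\{f > b+\epsilon\}$, one estimates for any fixed $\epsilon > 0$,
\begin{align*}
f_k(x_0) - (b+\epsilon) \ &= \ \int \psi_k \, (f - (b+\epsilon))\,{\rm d}\meas \\
&\ls\ -\bigl[t(b-a) + \epsilon\bigr] \int_{E_{t^*}} \psi_k \,{\rm d}\meas \ +\ C\|f\|_\infty \, \|\psi_k\|_\infty\, \meas\bigl(B_{3r_k}(x_0) \cap \{f > b+\epsilon\}\bigr).
\end{align*}
The last term vanishes as $k \to \infty$ by the zero density of $\{f > b+\epsilon\}$ at $x_0$. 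The delicate remaining point -- and the principal difficulty -- is a lower bound $\liminf_k \int_{E_{t^*}} \psi_k\, {\rm d}\meas \gs \gamma/c_{N,K,R}$: this is proved by comparing $\psi_k$ from below, up to a doubling-dependent constant, to $c_{N,K,R}^{-1}\cdot\chi_{B_{3r_k}(x_0)}/\meas(B_{3r_k}(x_0))$, which is possible because $\sum_i \phi_i^k(x_0) = 1$, $\{B_{r_k}(x_i^k)\}$ covers $B_{3r_k}(x_0)$ with bounded overlap, and these off-center balls have measure comparable to $\meas(B_{3r_k}(x_0))$. Plugging this into the hypothesis $\liminf_r \meas(E_{t^*} \cap B_r(x_0))/\meas(B_r(x_0)) \gs \gamma$ and letting $k \to \infty$ then $\epsilon \to 0$ yields the stated pointwise upper bound for $\limsup_k f_k(x_0)$, with the universal constant $c = c_{N,K,R}$ absorbing the doubling losses incurred in passing from the concentric density at $x_0$ to the off-center averaging weights that define $f_k(x_0)$.
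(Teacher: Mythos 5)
Your proposal follows the same route as the paper --- discrete convolution at scales $r_k\downarrow 0$, with (i)--(ii) from the Poincar\'e/Lebesgue differentiation and (iv) from the cancellation $\sum_i\nabla\phi_i^k\equiv 0$ --- so the strategy matches. However, there is a genuine flaw in the radii you choose, and it breaks the key lemma of part (iii).

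You define $f_k:=\eta_k\sum_i\phi_i^kf_{B_{r_k}(x_i^k)}$ with the partition of unity $\{\phi_i^k\}$ supported in $B_{2r_k}(x_i^k)$ and the averaging taken over $B_{r_k}(x_i^k)$: the averaging radius is \emph{smaller} than the support radius of the weights. With this choice, the representing kernel $\psi_k(y)=\sum_i\phi_i^k(x_0)\chi_{B_{r_k}(x_i^k)}(y)/\meas(B_{r_k}(x_i^k))$ need \emph{not} be bounded below by $c^{-1}\chi_{B_{3r_k}(x_0)}/\meas(B_{3r_k}(x_0))$, and in fact it can vanish on a nontrivial portion of $B_{3r_k}(x_0)$: if $\phi_i^k(x_0)>0$ forces only $d(x_0,x_i^k)<2r_k$, and $y\in B_{r_k}(x_i^k)$ forces $d(y,x_i^k)<r_k$, there is no reason the two conditions are simultaneously satisfiable for a given $y$ and for indices $i$ carrying most of the weight $\phi_i^k(x_0)$; e.g.\ the net point $x_{i_0}^k$ covering $x_0$ could lie at distance nearly $r_k$ from $x_0$, so even points $y$ within $r_k/4$ of $x_0$ on the opposite side escape $B_{r_k}(x_{i_0}^k)$. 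Without that pointwise lower bound, the chain $\liminf_k\int_{E_{t^*}}\psi_k\,{\rm d}\meas\gs\gamma/c$ is not justified, and the whole estimate in (iii) is left unproved.

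The fix (used in the paper, following KKST14) is to average over balls of radius strictly larger than the support of the weights --- there $\phi_i$ is supported in $B_{2\varepsilon}(x_i)$ while the averages are $f_{B_{5\varepsilon}(x_i)}$. Then for any $y\in B_{3\varepsilon}(x_0)$ and any $i$ with $\phi_i(x_0)>0$ one has $d(y,x_i)\ls d(y,x_0)+d(x_0,x_i)<3\varepsilon+2\varepsilon=5\varepsilon$, so \emph{every} term in the sum is active, and the full unit weight $\sum_i\phi_i(x_0)=1$ transfers to the kernel, giving $\psi(y)\gs 1/\max_i\meas(B_{5\varepsilon}(x_i))\gs c^{-1}/\meas(B_{3\varepsilon}(x_0))$ on all of $B_{3\varepsilon}(x_0)$. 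Parts (i), (ii), (iv) of your argument are sound (and (iv) should also be checked with the corrected averaging radius, but the cancellation argument survives), so once the radii are decoupled the proof goes through.
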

\begin{proof} The construction of this sequence $f_k$ has been given in \cite[Proposition 4.1]{KKST14}. For the completeness, we report their construction below.

Let $\varepsilon>0$ so that $\varepsilon<\frac{R}{100}$. Because $\meas$ is doubling on $B_{4R}(z_0)$, we can cover $B_{4R}(z_0)$ by a countable collection $\{ B_i\}$, $i=1,2,\cdots, $ of ball $B_i=B_\varepsilon(x_i)$ such that
$$\sum_{i=1}^\infty \chi_{20 B_{i}}\ls c,$$
where $20B_i:=B_{20\varepsilon}(x_i)$, and  $c$ depends only on the doubling constant on $B_{4R}(z_0)$ (and hence it depends only on $N,K$ and $R$). Remark that $(X,d,\meas)$ supports the locally weak (1,1)-Poincar\'e inequality (\ref{equ-add-Poincare-ineq}).
	Let $\varphi_i$ be a partition of unity with $0\ls \varphi_i\ls 1$, $\sum_{i=1}^\infty\varphi_i\equiv1$, $\varphi_i$ is $c/\varepsilon$-Lipschitz and $\varphi_i>0$ in $B_i$, and ${\rm supp}(\varphi_i)\subset 2B_i$ for each $i=1,2,\cdots.$ We set
	$$f_\varepsilon(x):=\sum_i^\infty f_{5B_i}\varphi_i(x),\quad x\in B_{4R}(z_0),$$
	and $f_\varepsilon(x)=0$ for any $x\not\in B_{4R}(z_0)$, where $f_{5B_i}=\fint_{B_{5\varepsilon}(x_i)}f{\rm d}\meas$. The function $f_\varepsilon$ is sometimes called the discrete convolution of $f$ (see, for example, \cite{KKST12, KLLS19}).
	It is clear that $f_\varepsilon\in Lip_0(B_{2R}(z_0))$ since $\{ f^\vee\not=0\}\cup \{f^\wedge\not=0\}$ is contained in $B_R(z_0)$ and $\varepsilon<R/100$.	

	For each $x\in B_j$, we have
	$$|f_\varepsilon(x)-f(x) |\ls \sum_{i: 2B_i\cap B_j\not=\emptyset}|f(x)-f_{5B_i}|.$$
	This implies that $f_\varepsilon\to f$ in $L^1(X)$ as $\varepsilon\to0$, and that
	$f_\varepsilon\to f$ in $X\setminus S_f$ by \cite[Theorem 3.4]{KKST14}. By taking $\varepsilon=1/k$, we conclude the items (i) and (ii).
	
	Fix any $x_0\in S_f$. Let $t$ and $\gamma$ satisfy the conditions in item (iii).  Letting $u:=-f$ and repeating the calculation in \cite[Page 60-61]{KKST14}, one can get
	$$\limsup_{\varepsilon\to0}f_\varepsilon(x_0) \ls \left(1-\frac{\gamma}{tc}\right)\cdot  f^\vee(x_0)+  \frac{\gamma}{tc}\cdot f^\wedge(x_0),$$	
	which is the desired estimate in item (iii) (by letting $\varepsilon=1/k$).
	
	From \cite[Equ. (4.1)]{KKST14}, we have
	$$|\nabla f_\varepsilon|\ls c\sum_{j=1}^\infty\frac{|Df|(10 B_j)}{\meas(B_j)}\cdot \chi_{B_j}.$$
	Thus, for any nonnegative $\eta\in Lip_0(X)$, we obtain
	\begin{equation*}
		\begin{split}
			\int_X\eta |\nabla f_\varepsilon|{\rm d}\meas&\ls c\sum_{j=1}^\infty\frac{|Df|(10 B_j)}{\meas(B_j)}\int_{B_j}\eta{\rm d}\meas=c\sum_{j=1}^\infty\eta_{B_j}\cdot |Df|(10 B_j)\\
			&=c \int_X\sum_{j=1}^\infty\eta_{B_j}\cdot\chi_{10B_j}{\rm d}|Df|=c\int_X\psi_\varepsilon{\rm d}|Df|,
		\end{split}
	\end{equation*}
	where
	$$\psi_\varepsilon:=\sum_{j=1}^\infty\eta_{B_j}\cdot\chi_{10B_j}.$$
	By the bounded overlap of the collection $10B_j$, $j=1,2,,\cdots,$ we have
	$$\psi_\varepsilon(x)=\sum_{j: 10B_j\ni x}\eta_{B_j}	\ls c\sum_{j: 10B_j\ni x}\frac{1}{\meas(B_{11\varepsilon}(x))}\int_{B_{11\varepsilon}(x)}\eta{\rm d}\meas\ls c'\fint_{B_{11\varepsilon}(x)}\eta {\rm d}\meas.$$
	As $\eta$ is continuous on each point $x\in X$, it follows
	$$\limsup_{\varepsilon\to0}\psi_\varepsilon(x)\ls c'\eta(x),\quad \forall x\in X.$$
	By the dominated convergence theorem, we conclude that
	$$\limsup_{\varepsilon\to0}\int_X\eta|\nabla f_\varepsilon|{\rm d}\meas\ls c\limsup_{\varepsilon\to0}\int_X\psi_\varepsilon{\rm d}|Df|\ls cc'\int_X\eta{\rm d}|Df|,$$	
	which is the desired estimate in the item (iv). The proof is finished.
	\end{proof}

We have the following approximation by using the above proposition to $f=\chi_\Omega$ for a bounded open set  $\Omega$ of finite perimeter.
\begin{lemma}\label{BR-lem-4.11}
	Let $\Omega$ be a bounded open set of finite perimeter. Assume that $$\liminf_{r\to0}\frac{\meas\big(B_r(x_0)\setminus\Omega \big)}{\meas\big(B_r(x_0)\big)} \gs \gamma,\quad \forall x_0\in \partial\Omega,$$
  for some $\gamma\in (0,1)$. 	
	 Then there is a sequence $\phi_k\in Lip_0(\Omega)$, $0\ls \phi_k\ls 1$, such that   $\phi_k(x)\to \chi_\Omega$ in $L^1(X)$, as $k\to\infty$,  and
	\begin{equation}
		\label{BR-equ-4.9}
	\limsup_{k\to\infty}\int_X\eta|\nabla \phi_k|{\rm d}\meas \ls \frac{c}{\gamma}\cdot\int_X\eta {\rm d}Per_{\Omega} , \quad \forall \eta\in Lip_0(X),
	 \end{equation}
	 	where the constant $c$ depends only on $N,K$ and ${\rm diam}(\Omega)$.	
\end{lemma}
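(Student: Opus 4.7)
The plan is to apply Proposition \ref{BR-prop-4.10} directly to $f=\chi_{\Omega}$, then post-process by composing with a Lipschitz truncation and multiplying by a spatial cutoff. Fix a ball $B_R(z_0) \supset \overline{\Omega}$, so that $\{f^{\vee}\neq 0\}\cup\{f^{\wedge}\neq 0\}$ lies in $B_R(z_0)$. For $f=\chi_\Omega$ we have $f^{\vee}(x_0)=1$ unless $x_0\in\Omega^{(0)}$ and $f^{\wedge}(x_0)=0$ unless $x_0\in\Omega^{(1)}$; hence $S_f\subset \partial\Omega$, and the exterior-density hypothesis rules out boundary points of density $1$, so every $x_0\in\partial\Omega$ has $f^{\wedge}(x_0)=0$. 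To invoke item (iii) I would take $t=1/2$: then $t^{*}=-1/2$ and $E_{t^{*}}=\{\chi_\Omega<1/2\}=X\setminus\Omega$, which has finite perimeter (equal to $Per_\Omega$), and the required lower-density bound at $x_0\in S_f$ is precisely our assumption.

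Proposition \ref{BR-prop-4.10} then delivers $f_k\in Lip_0(B_{2R}(z_0))$ with $0\leq f_k\leq 1$, $f_k\to\chi_\Omega$ in $L^1(X)$, $\limsup_{k}\int \eta|\nabla f_k|{\rm d}\meas\leq c_0\int \eta\,{\rm d}Per_{\Omega}$, and
\[
\limsup_{k\to\infty} f_k(x_0)\leq \alpha:=1-\tfrac{2\gamma}{c_0}\qquad \forall x_0\in\partial\Omega,
\]
using item (iii) on $S_f$ and item (ii) on $\partial\Omega\cap\Omega^{(0)}$ (where $f_k(x_0)\to 0$). Composing with the Lipschitz truncation $\psi_\alpha(s):=\min\{1,(s-\alpha)_+/(1-\alpha)\}$, I set $\tilde\phi_k:=\psi_\alpha\circ f_k$. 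Then $\tilde\phi_k$ is Lipschitz, takes values in $[0,1]$, converges to $\chi_\Omega$ pointwise off the $\meas$-negligible set $S_f$ and therefore in $L^1(X)$ by dominated convergence, and satisfies $|\nabla\tilde\phi_k|\leq (1-\alpha)^{-1}|\nabla f_k|=(c_0/(2\gamma))|\nabla f_k|$, which yields the target gradient inequality with constant $c_0^{2}/(2\gamma)$.

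The remaining obstacle — the hardest step — is to guarantee $\tilde\phi_k\in Lip_0(\Omega)$ (compact support strictly inside $\Omega$), since a priori $\operatorname{supp}(\tilde\phi_k)\subset\{f_k\geq\alpha\}$ may reach $\partial\Omega$. I would multiply by a Lipschitz spatial cutoff
\[
\beta_{\delta_k}(x):=\min\Bigl\{1,\frac{(d(x,X\setminus\Omega)-\delta_k)_+}{\delta_k}\Bigr\},
\]
supported in $\overline{\Omega_{\delta_k}}\Subset\Omega$ with $|\nabla\beta_{\delta_k}|\leq 1/\delta_k$, and define $\phi_k:=\tilde\phi_k\cdot\beta_{\delta_k}\in Lip_0(\Omega)$. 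The product rule splits $\int\eta|\nabla\phi_k|{\rm d}\meas$ into a main term bounded by $\int\eta|\nabla\tilde\phi_k|{\rm d}\meas$ (already controlled) and an error term $\delta_k^{-1}\int_{\Omega_{\delta_k}\setminus\Omega_{2\delta_k}}\eta\,\tilde\phi_k\,{\rm d}\meas$. The hard part is choosing $\delta_k\downarrow 0$ coupled to the convolution scale $\varepsilon_k$ underlying $f_k$ so that this error term is asymptotically $\leq (c/\gamma)\int\eta\,{\rm d}Per_{\Omega}$: in the annulus $\{\delta_k\leq d(\cdot,X\setminus\Omega)\leq 2\delta_k\}$ the Poincar\'e-based estimate powering Proposition \ref{BR-prop-4.10}(iii) forces $f_k\leq 1-c\gamma$ with room to spare, so $\tilde\phi_k$ is small there, and the coarea representation of $\int|\nabla f_k|\,{\rm d}\meas$ allows one to dominate the layer integral by $\int\eta\,{\rm d}Per_{\Omega}$. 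A diagonal extraction in $k$ then produces the desired sequence.
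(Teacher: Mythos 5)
Your setup — applying Proposition \ref{BR-prop-4.10} to $f=\chi_\Omega$ with $t=1/2$, identifying $E_{-1/2}=X\setminus\Omega$, and then post-truncating with an affine map $\psi_\alpha$ — follows the paper's argument exactly. The gap is in the step where you pass from the pointwise estimate $\limsup_k f_k(x_0)\ls\alpha$ (valid for each fixed $x_0\in\partial\Omega$) to control of $\tilde\phi_k=\psi_\alpha\circ f_k$ near the boundary. Proposition \ref{BR-prop-4.10}(iii) and the hypothesis $\liminf_{r\to0}\meas(B_r(x_0)\setminus\Omega)/\meas(B_r(x_0))\gs\gamma$ are both pointwise $\limsup$/$\liminf$ statements and are in no way uniform in $x_0$ or in the scale $r$; hence there is no single $k$ for which $f_k\ls\alpha$ holds simultaneously on all of $\partial\Omega$ (let alone on a full annulus $\Omega_{\delta_k}\setminus\Omega_{2\delta_k}$), and the set $\{f_k>\alpha\}$ can touch $\partial\Omega$ for every $k$. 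Your spatial cutoff $\beta_{\delta_k}$ does not repair this: the layer error $\delta_k^{-1}\int_{\Omega_{\delta_k}\setminus\Omega_{2\delta_k}}\eta\tilde\phi_k\,{\rm d}\meas$ is controllable only if $\tilde\phi_k$ is quantitatively small on that annulus, which is exactly the uniform estimate you are trying to avoid. The sentence ``the Poincar\'e-based estimate \ldots forces $f_k\ls 1-c\gamma$ with room to spare'' is therefore circular: it asserts a uniform boundary bound that the pointwise hypotheses do not yield.

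The paper closes this gap by a convex-combination device that you are missing. Because $\overline{B_{4R}(z_0)}\setminus\Omega$ is compact and the $f_k$ are continuous, \cite[Lemma 6.1]{KLLS19} produces convex combinations $\hat f_k$ of the $f_k$ satisfying the \emph{uniform} bound $\hat f_k\ls 1-\gamma/(4c)$ on $\overline{B_{4R}(z_0)}\setminus\Omega$ for all large $k$; since the $f_k$ vanish outside $B_{4R}(z_0)$, the same uniform bound holds on all of $X\setminus\Omega$. The convex combinations inherit $L^1$-convergence to $\chi_\Omega$, and, because $\eta\mapsto\int_X\eta|\nabla\cdot|{\rm d}\meas$ is convex, they also inherit the gradient bound of Proposition \ref{BR-prop-4.10}(iv). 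Only after this uniformization does the affine truncation of $\hat f_k$ give $\phi_k\in Lip_0(\Omega)$ directly — no spatial cutoff and no error term. Incorporating this convex-combination step (Mazur/Banach--Saks type upgrading of pointwise $\limsup$ to uniform on a compact set) is the ingredient your proof needs.
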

\begin{proof}
	Let $f=\chi_\Omega$. For any $x_0\in S_f\subset \partial\Omega$, we have $f^\vee=1$ and $f^\wedge=0$.
	
 From the above Proposition \ref{BR-prop-4.10} (iii), by taking  $t=1/2$, we get $t^*=-1/2$ and then $E_{-1/2}=\{f<1/2\}=X\setminus\Omega$.
There exists a sequence $f_k\in Lip_0(B_{4R}(z_0))$, $R:={\rm diam}(\Omega),$ such that $0\ls f_k\ls 1$, $f_k\to f$ in $L^1(X)$,
 $$\limsup_{k\to\infty}f_k\ls 1-\frac{\gamma}{2c}\quad {\rm on}\ X\setminus\Omega,$$
and
$$\limsup_{k\to\infty}\int_X\eta|\nabla f_k|{\rm d}\meas \ls c\int_X\eta {\rm d} Per_{\Omega}, \quad \forall \eta\in Lip_0(X), \ \eta\gs0.$$

Since $\overline{B_{4R}(z_0)}\setminus\Omega$ is compact, from \cite[Lemma 6.1]{KLLS19}, there exists a convex combination of $f_k$, denoted by $\hat f_k$, such that
$$\hat f_k(x)\ls 1-\frac{\gamma}{2c}+\frac{\gamma}{4c}=1-\frac{\gamma}{4c},\quad \forall x\in \overline{B_{4R}(z_0)}\setminus\Omega,$$
for all $k$ sufficiently large. Remark that $f_k=0$ outside of $B_{4R}(x_0)$, so is $\hat f_k$. Hence $\hat f_k\ls 1-\gamma/(4c)$ for any $x\in X\setminus\Omega $ for all $k$ sufficiently large.  Because $f_k\to \chi_\Omega$ in $L^1(X)$, as $k\to\infty$. There exists a subsequence of $\{ f_k\}$, denoted by $\{ f_k\}$ again, such that
$$  f_k(x)\to \chi_\Omega(x), \quad \meas{\rm-a.e.} \ x\in X.$$
Therefore, the convex combinations
$$ \hat f_k(x)\to \chi_\Omega(x), \quad \meas{\rm-a.e.} \ x\in X.$$
As in \cite{KLLS19}, we put
$$\phi_k:=\frac{\max\{\hat f_k-1+\gamma/(4c)-1/k,\ -1/k\}}{\gamma/(4c)},\qquad \forall k\in\mathbb N.$$
Then $\phi_k(x)\to \chi_\Omega(x)$ for $ \meas{\rm-a.e.} \ x\in X$, as $k\to\infty$. The dominated convergence theorem ensures $\phi_k\to  \chi_\Omega$ in $L^1(X)$. The combination of  $|\nabla \phi_k|\ls (4c/\gamma)\cdot|\nabla \hat f_k|$ for each $k\in\mathbb N$ and the fact that $\hat f_k$ is a convex combination of $f_k$   yields
$$\limsup_{k\to \infty}\int_X\eta|\nabla\phi_k|{\rm d}\meas\ls (4c/\gamma)\int_X\eta {\rm d}Per_\Omega$$
for any nonnegative $\eta\in Lip_0(X)$.

At last, we check that $\phi_k\in Lip_0(\Omega)$ for all $k$ sufficiently large. Since $\phi_k$ is Lipschitz continuous on $X$, it suffices to check ${\rm supp}(\phi_k)\subset \Omega$ for all $k$ sufficiently large.  By $\hat f_k\ls 1-\gamma/(4c)$ for any $x\in X\setminus\Omega$ for all $k$ sufficiently large, we have
$$\phi_k=-1/(k\cdot \gamma/4c)<0 \quad {\rm on}\ X\setminus\Omega,$$
which implies ${\rm supp}(\phi_k)\subset \Omega$.
The proof is finished.
\end{proof}

\begin{remark}\label{BR-rem-4.12}
	(1) Recalling in the Euclidean setting $X=\mathbb R^N$, it is proved \cite[Theorem 1.8]{GHL23} that $\Omega$ can be approximated  by  smooth sets from the interior (that is, there exists a sequence of smooth domains $\{\Omega_j\}_{j\in \mathbb N}$ such that $\Omega_j\Subset\Omega_{j+1}$ for each $j$, $\Omega=\cup_j\Omega_j$ and $Per_{\Omega_j}(\mathbb R^N)\to Per_\Omega(\mathbb R^N)$) if and only if
		  $\mathscr H^{N-1}\big(\partial\Omega\cap \Omega^{(1)}\big)=0.$	
		
		  (2) It is clear that
$$\liminf_{r\to0}\frac{\meas\big(B_r(x_0)\setminus\Omega \big)}{\meas\big(B_r(x_0)\big)}>0,\quad \forall x_0\in \partial\Omega,$$
implies $\partial\Omega\cap \Omega^{(1)}=\emptyset.$

\end{remark}

From (\ref{BR-equ-4.9}), there exists a subsequence of $\phi_k$, denoted by $\phi_k$ again, such that
\begin{equation}\label{equation-4-9}
	|\nabla \phi_k|\cdot\meas \rightharpoonup \mu\quad {\rm weakly\ as }\ k\to \infty,
\end{equation}
  for some Radon measure $\mu$, and moreover,
$$\mu(U)\ls (c/\gamma)\cdot Per_\Omega(U),\quad  \forall U\subset X, \ U \ {\rm is\ open}.$$

\begin{proof}[Proof of Proposition \ref{BR-prop-1.12}]
This is given by repeating the proof of Proposition \ref{BR-prop-4.6}, and replacing  Lemma \ref{BR-lem-4.5} by the above measure $\mu$ in (\ref{equation-4-9}).
\end{proof}

\section{Boundary estimates for nonnegative subharmonic functions}

Let $(X,d,\meas)$ be an $RCD(K, N)$ space. Without loss of generality, we assume again that $K<0$ and $N>1$.
 We will show the following estimate for nonnegative subharmonic functions in Zygmund spaces:

\begin{theorem}\label{BR-thm-5.1-hf}
 Let $\Omega$ be as in Theorem \ref{BR-thm-1.8-main}.
 Let $f\in W^{1,2}(\Omega)\cap C(\overline\Omega)$ be a sub-harmonic function on $\Omega$ and $f\gs0$. Suppose that there exist  $x_0\in \partial \Omega$ and a constant $L>0$ such that
  $$f(y)\ls L\cdot d(y,x_0)\qquad \forall\ y\in \partial \Omega.$$ Then it holds
\begin{equation}
	\label{BR-equ-5.1}
	f(x)   \ls c_{1} L\cdot \delta(x) \ln\left(\frac{e\cdot {\rm diam}(\Omega)}{\delta(x)}\right)
	\end{equation}
for any $x\in\Omega$  such that $d(x,x_0)\ls 2\delta(x)=2d(x,\partial\Omega)$, where the constant $c_1$ depends only on $K$, $N$, $ {\rm diam}(\Omega), \meas(\Omega), Per_\Omega(X),  R_{\rm ext}$ and $C_0$ in  \eqref{BR-equ-1-3}.
\end{theorem}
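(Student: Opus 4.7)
The plan is to combine the approximate Dirichlet Green function from Section 3 with the subharmonicity of $f$ and the one-sided approximation of $\chi_\Omega$ from Lemma \ref{BR-lem-4.11}. Fix an interior point $x \in \Omega$ with $d(x,x_0) \le 2\delta(x)$ and a small $\varepsilon > 0$, and set $g_\varepsilon := G_\varepsilon(x, \cdot)$. By Lemma \ref{BR-lem-3.10}, $g_\varepsilon \in D(\Delta_\Omega) \cap W^{1,2}_0(\Omega)$, $\Delta_\Omega g_\varepsilon = -p^\Omega_\varepsilon(x, \cdot)$, and $g_\varepsilon \ge 0$. Let $\{\phi_k\} \subset Lip_0(\Omega)$ be the approximating sequence of Lemma \ref{BR-lem-4.11}. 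Since $\phi_k f \in W^{1,2}_0(\Omega)$, integration by parts against $g_\varepsilon$ yields
$$
\int_\Omega \phi_k f \cdot p^\Omega_\varepsilon(x,\cdot)\, d\meas = \int_\Omega \phi_k \ip{\nabla f}{\nabla g_\varepsilon}\, d\meas + \int_\Omega f\, \ip{\nabla \phi_k}{\nabla g_\varepsilon}\, d\meas.
$$

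Passing $k \to \infty$, the LHS tends to $H^\Omega_\varepsilon f(x)$. The first term on the RHS converges to $\int_\Omega \ip{\nabla f}{\nabla g_\varepsilon}\, d\meas$, which is $\le 0$ because $g_\varepsilon$ is a nonnegative element of $W^{1,2}_0(\Omega)$ and $f$ is subharmonic. For the second (boundary) term, the construction in Proposition \ref{BR-prop-4.10} shows that $|\nabla \phi_k|$ is supported in a neighborhood of $\partial \Omega$ of width $O(1/k)$; hence, for $k$ large depending on $\varepsilon$, this support lies inside $\{\delta \le \sqrt \varepsilon\}$. On this region, Lemma \ref{BR-lem-3.10} combined with Theorem \ref{BR-thm-3.9} gives $|\nabla g_\varepsilon|(y) \le \tilde h(y)$, where
$$
\tilde h(y) := c\, \delta(x) \left( \int_{d^2(x,y)}^{D^2} \frac{ds}{s\, \meas(B_{\sqrt s}(x))} + 1 \right)
$$
is continuous on $\overline\Omega \setminus \{x\}$ (and $+\infty$ at $x$). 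The weak convergence $|\nabla \phi_k|\meas \rightharpoonup \mu$ with $\mu \le (c/\gamma)\, Per_\Omega$ on open sets (Lemma \ref{BR-lem-4.11}), together with continuity of $f\tilde h$ near $\partial\Omega$, then yields $H^\Omega_\varepsilon f(x) \le (c/\gamma) \int_{\partial \Omega} f\, \tilde h\, dPer_\Omega$.

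Sending $\varepsilon \to 0$, the standard pointwise convergence $H^\Omega_\varepsilon f(x) \to f(x)$ (valid since $f \in C(\overline\Omega)$ and $x$ is interior) and the fact that $\tilde h$ does not depend on $\varepsilon$ give $f(x) \le (c/\gamma) \int_{\partial \Omega} f(y)\, \tilde h(y)\, dPer_\Omega(y)$. Inserting $f(y) \le L\, d(x_0, y)$ and applying Fubini's theorem to the integral in $\tilde h$ reduces the problem to estimating, for $s \ge \delta(x)^2$,
$$
\int_{\partial\Omega \cap B_{\sqrt s}(x)} d(x_0, y)\, dPer_\Omega(y) \le 3\sqrt s\cdot Per_\Omega(B_{\sqrt s}(x)),
$$
where $d(x_0,y) \le 3\sqrt s$ follows from $d(x, x_0) \le 2\delta(x) \le 2\sqrt s$. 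The perimetric regularity \eqref{BR-equ-1-3} applied to $B_{3\sqrt s}(x_0) \supset B_{\sqrt s}(x)$, together with the doubling of $\meas$, gives $Per_\Omega(B_{\sqrt s}(x)) \le C\, \meas(B_{\sqrt s}(x))/\sqrt s$. The volume factor cancels, and the remaining outer integral is $\int_{\delta(x)^2}^{D^2} ds/s = 2\ln(D/\delta(x))$, which produces \eqref{BR-equ-5.1}.

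The main obstacle is the tension between the hypothesis of Proposition \ref{BR-prop-1.12}, which demands an upper semicontinuous bound $|\nabla g| \le h$ on \emph{all} of $\Omega$, and Lemma \ref{BR-lem-3.10}, which provides such a bound only in the thin strip $\{\delta \le \sqrt \varepsilon\}$. Rather than invoking Proposition \ref{BR-prop-1.12} as a black box, I reproduce its proof, exploiting that $|\nabla \phi_k|$ concentrates precisely in this strip for $k$ large. A secondary technical issue is that any crude estimate of $\int_{\partial \Omega} f\, \tilde h\, dPer_\Omega$ would lose a factor of the logarithm, so the Fubini swap is essential: it forces the perimetric regularity and the volume factor $\meas(B_{\sqrt s}(x))$ to balance scale-by-scale, leaving the single $\int ds/s$ that delivers the sharp logarithmic growth.
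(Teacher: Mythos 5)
Your proposal follows the same overall strategy as the paper's proof, which runs through Theorem \ref{BR-thm-5.3} and Lemma \ref{BR-lem-5.4}: pair the approximate Green function $G_\varepsilon(x,\cdot)$ and its boundary gradient estimate (Lemma \ref{BR-lem-3.10}, Theorem \ref{BR-thm-3.9}) with the one-sided approximation $\phi_k$ of Lemma \ref{BR-lem-4.11}, send $k\to\infty$ and then $\varepsilon\to 0$, and finally estimate the boundary integral by a Fubini swap using the perimetric regularity \eqref{BR-equ-1-3} and doubling. Your final Fubini/doubling computation matches Lemma \ref{BR-lem-5.4} and is correct, as is the observation that subharmonicity kills the interior term.

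The single genuine divergence from the paper is also the single gap. You correctly identify the tension: Lemma \ref{BR-lem-3.10} bounds $|\nabla G_\varepsilon(x,\cdot)|$ only on the strip $\{\delta\le\sqrt\varepsilon\}$, while Proposition \ref{BR-prop-1.12} demands an upper semicontinuous dominator on all of $\overline\Omega$. The paper (proof of Theorem \ref{BR-thm-5.3}) resolves this by exploiting $G_\varepsilon(x,\cdot)\in Lip_{\rm loc}(\Omega)$ to get a constant interior bound $C''_{x,\varepsilon}$ on $\overline{\Omega_{\sqrt\varepsilon/2}}$ and then gluing this constant to $h_x$ by a cutoff $\eta$, producing a continuous $\tilde h\in C(\overline\Omega)$ whose restriction to $\partial\Omega$ is still $h_x$; Proposition \ref{BR-prop-1.12} then applies verbatim. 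Your workaround instead asserts that ${\rm supp}\,|\nabla\phi_k|$ lies in a strip of width $O(1/k)$ around $\partial\Omega$, citing ``the construction in Proposition \ref{BR-prop-4.10}.'' This is true for the raw discrete convolutions $f_k$ there, but the $\phi_k$ you actually use come from Lemma \ref{BR-lem-4.11}, where $f_k$ is first replaced by a convex combination $\hat f_k$ (via \cite[Lemma 6.1]{KLLS19}) and then truncated. The paper never states that $\hat f_k$ is a convex combination over a tail $\{f_j : j\ge n_k\}$ with $n_k\to\infty$, and without that the width of ${\rm supp}\,|\nabla\hat f_k|$, hence of ${\rm supp}\,|\nabla\phi_k|$, is not controlled. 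Even the weaker property you actually need (for each fixed $\varepsilon$, ${\rm supp}\,|\nabla\phi_k|\subset\{\delta\le\sqrt\varepsilon\}$ for all large $k$) is an unstated feature of the construction that must be proved. The paper's interior-extension trick sidesteps this entirely, which is why it is the safer route; if you wish to keep your version, you need to verify the support shrinkage explicitly from the convex-combination step, or simply adopt the paper's gluing of a constant interior dominator and invoke Proposition \ref{BR-prop-1.12} as a black box.
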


We need the following simple fact.
\begin{lemma}\label{BR-lem-5.2}
	Let  $\Omega$ be a bounded domain. If it satisfies the exterior ball condition, then it holds $$\liminf_{r\to0}\frac{\meas\big(B_r(x_0)\setminus\Omega \big)}{\meas\big(B_r(x_0)\big)} \gs \gamma,\quad \forall x_0\in \partial\Omega,$$
	for some constant $\gamma$ depending only on $N, K, {\rm diam}(\Omega)$.
	\end{lemma}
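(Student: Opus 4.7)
The plan is to use the exterior ball condition to embed a ball of radius comparable to $r$ inside $B_r(x_0)\setminus\Omega$, and then to invoke the Bishop--Gromov inequality \eqref{BR-equ-2.3} to conclude that this auxiliary ball occupies a definite fraction of $\meas(B_r(x_0))$.

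Concretely, first I would fix $x_0\in\partial\Omega$ and invoke the exterior ball condition to produce $y_0\in X\setminus\Omega$ with $d(y_0,x_0)=R_{\rm ext}$ and $\overline{B_{R_{\rm ext}}(y_0)}\cap\Omega=\emptyset$. Since any $RCD(K,N)$-space with $N<\infty$ is a geodesic space, I can pick a minimizing unit-speed geodesic $\sigma:[0,R_{\rm ext}]\to X$ from $y_0$ to $x_0$, and for each $r\in(0,R_{\rm ext})$ set $z_r:=\sigma(R_{\rm ext}-r/2)$, so that $d(z_r,y_0)=R_{\rm ext}-r/2$ and $d(z_r,x_0)=r/2$.

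A direct triangle-inequality check then gives the two nestings
\begin{equation*}
B_{r/2}(z_r)\subset B_{R_{\rm ext}}(y_0)\cap B_r(x_0)\subset B_r(x_0)\setminus\Omega,\qquad B_r(x_0)\subset B_{2r}(z_r).
\end{equation*}
Combining these with Bishop--Gromov \eqref{BR-equ-2.3} applied at the center $z_r$ yields
\begin{equation*}
\frac{\meas(B_r(x_0)\setminus\Omega)}{\meas(B_r(x_0))}\gs\frac{\meas(B_{r/2}(z_r))}{\meas(B_{2r}(z_r))}\gs\frac{V_{N,K}(r/2)}{V_{N,K}(2r)}.
\end{equation*}
Since $V_{N,K}(r/2)/V_{N,K}(2r)\to(1/4)^N$ as $r\to 0^+$, taking the $\liminf$ produces the desired constant $\gamma$; to get a single uniform lower bound valid on the whole range $r\in(0,{\rm diam}(\Omega))$ one uses the monotonicity in Bishop--Gromov, which brings in the dependence on $K$ and ${\rm diam}(\Omega)$ declared in the statement.

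Since every step is entirely standard, I do not expect any serious obstacle. The only mild point to verify is the existence of the interior geodesic point $z_r$, but this is immediate from geodesy of finite-dimensional $RCD$ spaces; all the rest is triangle inequality plus Bishop--Gromov.
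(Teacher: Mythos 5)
Your proof is correct and follows essentially the same approach as the paper: both pick a geodesic from $x_0$ into the exterior ball, choose an intermediate point at distance comparable to $r$ from $x_0$, verify via triangle inequalities that a ball of radius $\sim r$ centered there lies in $B_r(x_0)\setminus\Omega$, and then invoke a doubling/Bishop--Gromov volume comparison at that center. The only cosmetic differences are your choice of radii ($r/2$ and $2r$ at the center $z_r$, working directly with $B_r(x_0)$, versus the paper's $r$ and $5r$ at $y_r$, working with $B_{3r}(x_0)$ and then rescaling) and your use of \eqref{BR-equ-2.3} directly rather than the derived doubling bound \eqref{BR-equ-2.4}, neither of which changes the substance of the argument.
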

	\begin{proof}
	For each $x_0\in \partial\Omega$, let   $B_{r_0}(y_0)$	 be an exterior ball with the center $y_0$ and the radius $r_0=d(y_0,\partial\Omega)=d(x_0,y_0)$.
	Then for any $r<r_0/3$, we have
	$$B_{3r}(x_0)\setminus\Omega\supset B_{r}(y_r),\quad B_{5r}(y_r)\supset B_{3r}(x_0),$$
	where $y_r$ is a point such that $d(x_0,y_r)=r$ and $d(y_r,y_0)=r_0-r$ (the existence of $y_r$ is ensured by the fact that $X$ is a geodesic space). Therefore, we get
	$$\meas\big(B_{3r}(x_0)\setminus\Omega\big)\gs \meas\big(B_r(y_r)\big)\gs \gamma\cdot \meas\big(B_{5r}(y_r)\big)\gs \gamma\cdot\meas\big(B_{3r}(x_0)\big)$$
	for any $r\ls r_0/3,$ where the constant $\gamma$ depends only on $N, K$ and ${\rm diam}(\Omega).$ This implies the desired assertion.	
	\end{proof}

To show Theorem  \ref{BR-thm-5.1-hf}, we first give the following estimate.
\begin{theorem}\label{BR-thm-5.3}
Let $\Omega$ be a bounded open set of finite perimeter, satisfying the uniformly exterior ball condition with radius $R_{\rm ext}$.
 Let $f\in W^{1,2}(\Omega)\cap C(\overline\Omega)$ be non-negative and satisfy ${\bf \Delta}f\gs -a$ for some $a(x)\in L^1(\Omega)$, $a\gs0$, in the sense of distriabutions. Then it holds
\begin{equation}
	\label{BR-equ-5.2}
	\begin{split}
			f(x)\ls & c_4\cdot \delta(x)  \int_{\partial\Omega}f(y)\left(  \int_{d^2(x,y)}^{D^2}\frac{{\rm d}t}{t\cdot \meas(B_{\sqrt t}(x))}+1 \right){\rm d}Per_\Omega(y)\\
			& +\int_\Omega a(y) G^\Omega(x,y) {\rm d}\meas(y)	
	\end{split}
	\end{equation}	
for any $x\in\Omega$, where the constant $D:=\max\{ {\rm diam}(\Omega), R_{\rm ext} \}$ and $c_4:=C_{N,K,{\rm diam}(\Omega)}\cdot c_3$ is  given in \eqref{BR-equ-3.13} and  \eqref{BR-equ-3.6}.
\end{theorem}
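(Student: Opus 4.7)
The plan is to use $G^{\Omega}(x,\cdot)$ as a weighted test function and realize the Gauss--Green computation through the smooth approximations $G_\varepsilon(x,\cdot):=\int_\varepsilon^\infty p_t^\Omega(x,\cdot){\rm d}t$ so that Proposition \ref{BR-prop-1.12} is directly applicable. Recall from Lemma \ref{BR-lem-3.10} that $G_\varepsilon(x,\cdot)\in W^{1,2}_0(\Omega)\cap D(\Delta_\Omega)\cap Lip_{\rm loc}(\Omega)$ with ${\bf \Delta}G_\varepsilon(x,\cdot)=-p_\varepsilon^\Omega(x,\cdot)\cdot\meas$, a finite Radon measure of total mass $\ls 1$. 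The uniform exterior ball condition together with Lemma \ref{BR-lem-5.2} supplies the exterior density $\gamma\in(0,1)$ required by Proposition \ref{BR-prop-1.12}. The first technical ingredient is an upper semicontinuous majorant of $|\nabla G_\varepsilon(x,\cdot)|$ on $\overline\Omega$ whose boundary values match the right-hand side of \eqref{BR-equ-5.2}. Setting
$$\Psi(y):=\int_{d^2(x,y)}^{D^2}\frac{{\rm d}t}{t\cdot\meas(B_{\sqrt t}(x))}+1,$$
the combination of Lemma \ref{BR-lem-3.10} and Theorem \ref{BR-thm-3.9} yields $|\nabla G_\varepsilon(x,\cdot)|(y)\ls c_5\delta(x)\Psi(y)$ for $\meas$-a.e.\ $y$ with $\delta(y)\ls\sqrt\varepsilon$; in the interior $\{\delta(y)\gs\sqrt\varepsilon/2\}$, $G_\varepsilon(x,\cdot)$ is Lipschitz with some (possibly $\varepsilon$-dependent) constant $M_\varepsilon$. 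Choosing a Lipschitz cutoff $\eta_\varepsilon$ equal to $0$ on $\{\delta\ls\sqrt\varepsilon/2\}$ and to $1$ on $\{\delta\gs\sqrt\varepsilon\}$ and letting $h_\varepsilon:=c_5\delta(x)\Psi+M_\varepsilon\eta_\varepsilon$ produces a continuous function on $\overline\Omega$ with $|\nabla G_\varepsilon(x,\cdot)|\ls h_\varepsilon$ $\meas$-a.e.\ and $h_\varepsilon|_{\partial\Omega}=c_5\delta(x)\Psi$.

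The main step is to apply Proposition \ref{BR-prop-1.12} to the pair $(f,\,G_\varepsilon(x,\cdot))$; the upper density bound on ${\bf \Delta}G_\varepsilon(x,\cdot)$ is trivially $a_g\equiv 0$. Revisiting the proof of Proposition \ref{BR-prop-4.6} (the backbone of Proposition \ref{BR-prop-1.12}) and passing to the limit $k\to\infty$ under the approximations $\phi_k\to\chi_\Omega$ of Lemma \ref{BR-lem-4.11}, one actually obtains the stronger inequality
$$-\int_\Omega f\,{\rm d}{\bf \Delta}G_\varepsilon(x,\cdot)\ls\int_\Omega\ip{\nabla f}{\nabla G_\varepsilon(x,\cdot)}{\rm d}\meas+\frac{c}{\gamma}\int_{\partial\Omega}f h_\varepsilon\,{\rm d}Per_\Omega,$$
the statement \eqref{BR-equ-1.16} being a further weakening via ${\bf \Delta}g\ls a\meas$ and $f\gs 0$; the passage to the limit on the left here is legitimate because ${\bf \Delta}G_\varepsilon(x,\cdot)$ is a finite signed measure, so Lebesgue dominated convergence applies directly. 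Since $-{\rm d}{\bf \Delta}G_\varepsilon(x,\cdot)=p_\varepsilon^\Omega(x,\cdot)\,{\rm d}\meas$, the left-hand side equals $H_\varepsilon^\Omega f(x)$, and using $h_\varepsilon|_{\partial\Omega}=c_5\delta(x)\Psi$ one gets
$$H_\varepsilon^\Omega f(x)\ls \int_\Omega\ip{\nabla f}{\nabla G_\varepsilon(x,\cdot)}{\rm d}\meas+c_6\delta(x)\int_{\partial\Omega}f\Psi\,{\rm d}Per_\Omega.$$
For the interior term, the distributional hypothesis ${\bf \Delta}f\gs -a$, initially defined against ${\rm Test}_c^\infty(\Omega)$, extends by density of ${\rm Test}_c^\infty(\Omega)$ in $W^{1,2}_0(\Omega)\cap L^\infty(\Omega)$ to arbitrary nonnegative $\phi\in W^{1,2}_0(\Omega)\cap L^\infty(\Omega)$ in the equivalent form $\int\ip{\nabla f}{\nabla\phi}{\rm d}\meas\ls\int\phi a\,{\rm d}\meas$; applied to $\phi=G_\varepsilon(x,\cdot)\gs 0$, this gives $\int\ip{\nabla f}{\nabla G_\varepsilon(x,\cdot)}{\rm d}\meas\ls\int G_\varepsilon(x,\cdot)a\,{\rm d}\meas$. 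Sending $\varepsilon\to 0^+$ finishes the proof: $H_\varepsilon^\Omega f(x)\to f(x)$ by continuity of $f$ at the interior point $x$, while $G_\varepsilon(x,\cdot)\nearrow G^\Omega(x,\cdot)$ monotonically, hence $\int G_\varepsilon(x,\cdot)a\,{\rm d}\meas\to\int G^\Omega(x,\cdot)a\,{\rm d}\meas$ by monotone convergence; the boundary term is already independent of $\varepsilon$.

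The principal subtlety is twofold. First, one must extract from the proof of Proposition \ref{BR-prop-1.12} the stronger inequality with $-\int f\,{\rm d}{\bf \Delta}g$ rather than $-\int fa\,{\rm d}\meas$ on the left-hand side; this is crucial, because only the stronger form turns the $\varepsilon\to 0^+$ limit of the left-hand side into exactly $f(x)$ (as opposed to something like $\int fp_\varepsilon^\Omega\,{\rm d}\meas$ trivially bounded above by a total-variation estimate that would lose the logarithmic boundary profile). Second, producing the upper semicontinuous majorant $h_\varepsilon$ of $|\nabla G_\varepsilon(x,\cdot)|$ on $\overline\Omega$ whose boundary values are controlled uniformly in $\varepsilon$ by $c_5\delta(x)\Psi$ is what makes the whole argument land on the optimal boundary kernel: the Li--Yau-type estimate of Lemma \ref{BR-lem-3.10} combined with the pointwise upper bound for $G^\Omega$ of Theorem \ref{BR-thm-3.9} is precisely the input that delivers the correct dependence on $\delta(x)$.
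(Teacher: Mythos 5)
Your overall strategy matches the paper's: approximate $G^\Omega(x,\cdot)$ by $G_\varepsilon(x,\cdot)$, build an upper semicontinuous majorant of $|\nabla G_\varepsilon(x,\cdot)|$ whose boundary restriction is the kernel $c_4\delta(x)\Psi$, apply Proposition~\ref{BR-prop-1.12}, exploit ${\bf\Delta}f\gs -a$ to control the interior term, and let $\varepsilon\to 0^+$. However, your construction of the majorant $h_\varepsilon$ contains a concrete error. You define
$h_\varepsilon := c_5\delta(x)\Psi + M_\varepsilon\eta_\varepsilon$
with $\eta_\varepsilon=1$ on $\{\delta\gs\sqrt\varepsilon\}$, a region that contains $x$ (since $\delta(x)>2\sqrt\varepsilon$). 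But
$\Psi(y)=\int_{d^2(x,y)}^{D^2}\frac{{\rm d}t}{t\meas(B_{\sqrt t}(x))}+1 \to +\infty$ as $y\to x$,
because $\meas(B_{\sqrt t}(x))\lesssim t^{N/2}$ makes the integrand non-integrable near $t=0$. Therefore $h_\varepsilon$ is \emph{not} continuous on $\overline\Omega$, contradicting your claim, and it is not even finite-valued usc, which is what Proposition~\ref{BR-prop-1.12} needs: in its proof (via Proposition~\ref{BR-prop-4.6}), the crucial step
$\limsup_k\int_\Omega fh|\nabla\phi_k|{\rm d}\meas\ls\int_{\partial\Omega}fh\,{\rm d}Per_\Omega$
rests on a Portmanteau-type inequality for upper semicontinuous integrands bounded above; that inequality is false for usc functions attaining $+\infty$ (take $g=1/|x|$ and $\mu_k=k^{-1}\delta_{1/k}\rightharpoonup 0$ for a counterexample). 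The paper avoids this by interpolating in the \emph{opposite} sense: its cutoff $\eta$ is $1$ near $\partial\Omega$ and $0$ on the deep interior $\Omega_{\sqrt\varepsilon}$, and it sets $\tilde h=\eta h_x+(1-\eta)C''_{x,\varepsilon}$, so that $\tilde h$ equals the finite constant $C''_{x,\varepsilon}$ throughout $\Omega_{\sqrt\varepsilon}$ (in particular at $x$), interpolates in the shell, and matches $h_x$ only where $\delta\ls\sqrt\varepsilon/2$. Your argument can be salvaged by capping $\Psi$ the same way, but the formula as you wrote it does not.

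Two smaller remarks. First, your invocation of Proposition~\ref{BR-prop-1.12} with $a\equiv 0$ gives the vacuous inequality $0\ls\ldots$, and you then have to re-open the proof of the proposition to recover the quantity $-\int f\,{\rm d}{\bf\Delta}G_\varepsilon(x,\cdot)$. This detour is unnecessary: since ${\bf\Delta}G_\varepsilon(x,\cdot)=-p_\varepsilon^\Omega(x,\cdot)\,\meas$ exactly, one may apply the proposition with $a:=-p_\varepsilon^\Omega(x,\cdot)\in L^1(\Omega)$, which makes the left-hand side of \eqref{BR-equ-1.16} equal to $\int_\Omega fp_\varepsilon^\Omega(x,\cdot)\,{\rm d}\meas=H_\varepsilon^\Omega f(x)$ with no extra work; this is what the paper does. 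Second, your claim that $H_\varepsilon^\Omega f(x)\to f(x)$ ``by continuity of $f$'' is the right conclusion but glosses over the need for a Gaussian tail estimate: the paper splits the integral into $B_r(x)$ and its complement and uses \eqref{BR-equ-2.8} to kill the far part, and this should be spelled out since $p_\varepsilon^\Omega$ is not an approximate identity in the naive sense near $\partial\Omega$.
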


\begin{proof}
	Fix arbitrarily $\varepsilon>0$ and $x\in\Omega$ such that $\delta(x)> 2\sqrt\varepsilon.$ 	
	We define the function
	$$h_x(y):= c_4\delta(x)\left(  \int_{d^2(x,y)}^{D^2}\frac{{\rm d}t}{t\cdot \meas(B_{\sqrt t}(x))}+1 \right)$$	
	for any $y\in \overline{\Omega}\setminus\Omega_{\sqrt{\varepsilon}},$ where $D:=\max\{ {\rm diam}(\Omega), R_{\rm ext} \}$ and $c_4:=C_{N,K,{\rm diam}(\Omega)}\cdot c_3$ is  given in \eqref{BR-equ-3.13} and  \eqref{BR-equ-3.6}.
Since $x\in\Omega_{2\sqrt\varepsilon}$, it is clear that $h_x$ is continuous on $\overline{\Omega}\setminus\Omega_{\sqrt{\varepsilon}}.$
	
	We claim that there exists  $\tilde{h}\in C(\overline\Omega)$ such that  $|\nabla G_\varepsilon(x,\cdot)|(y)\ls \tilde h(y)$ for almost all $y\in\Omega$, and $\tilde h(y)=h_x(y)$ for all $y\in \overline{\Omega}\setminus\Omega_{\sqrt\varepsilon/2}$.	
	
	From Lemma \ref{BR-lem-3.10}, we have $  G_\varepsilon(x,\cdot)\in Lip_{\rm loc}(\Omega)$. There is a positive number $C_{x,\varepsilon}$ such that $|\nabla G_\varepsilon(x,\cdot )|(y)\ls C_{x,\varepsilon}$   for almost all $y\in\overline{\Omega_{\sqrt\varepsilon/2}}.$ Since $h_x$ is continuous on $\overline{\Omega_{\sqrt{\varepsilon}/2}}\setminus\Omega_{\sqrt{\varepsilon}}$, we can assume that $h_x\ls C'_{x,\varepsilon}$ for some $C'_{x,\varepsilon}>0$ on $\overline{\Omega_{\sqrt{\varepsilon}/2}}\setminus\Omega_{\sqrt{\varepsilon}}$. Let $C''_{x,\varepsilon}:=\max\{C_{x,\varepsilon},C'_{x,\varepsilon}\}.$ We define
	$$\tilde{h}(y)=
	\begin{cases}
		C''_{x,\varepsilon},& y\in \Omega_{\sqrt{\varepsilon}},\\
		\eta(y) h_x(y)+\big (1-\eta(y)\big)C''_{x,\varepsilon}, & y\in \overline{\Omega}\setminus\Omega_{\sqrt{\varepsilon}},
	\end{cases}$$
	where $\eta(y)$ is a contnuous function on $\overline\Omega$ such that $0\ls \eta\ls 1,$ $\eta(y)=0$ on $\Omega_{\sqrt{\varepsilon}}$, and $\eta(y)=1$ on $\overline{\Omega}\setminus \Omega_{\sqrt{\varepsilon}/2}$. It is easy to see that $\tilde h\in C(\overline\Omega)$. By using (\ref{BR-equ-3.13}) and  Theorem \ref{BR-thm-3.9}, and  noticing that $h_x(y)\ls \tilde h(y)$ on $\Omega\setminus\Omega_{\sqrt{\varepsilon}}$, we have $|\nabla G_\varepsilon(x,\cdot)|\ls \tilde h(\cdot)$ almost all on $\Omega\setminus\Omega_{\sqrt{\varepsilon}}$. Combining the definition of $C_{x,\varepsilon}$, we conclude that $|\nabla G_\varepsilon(x,\cdot)|(y)\ls \tilde h(y)$ for almost all $y\in \Omega$. Now the claim is proved since $\tilde h=h_x$ on $\overline{\Omega}\setminus\Omega_{\sqrt\varepsilon/2}.$
	
 Noticing that $G_\varepsilon(x,\cdot)\in W^{1,2}_0(\Omega)$ and $\Delta_\Omega G_\varepsilon(x,\cdot)=-p^\Omega_\varepsilon(x,\cdot)$ in the sense of distributions, by combining the above claim, we now know that the function $g(\cdot):=G_\varepsilon(x,\cdot)$ meets all of conditions in Proposition \ref{BR-prop-1.12}. So, by combining with Lemma \ref{BR-lem-5.2}, we have
	$$\int_\Omega f(y) p^\Omega_\varepsilon(x,y){\rm d}\meas(y)\ls \int_\Omega\ip{\nabla f}{\nabla g}{\rm d}\meas
	+c\int_{\partial \Omega}f(y)  h_x(y){\rm d}Per_\Omega(y).$$
Since $g\gs0$, $g\in W^{1.2}_0(\Omega)$, $f\in W^{1.2}(\Omega)$ and ${\bf \Delta}f\gs -a$ in the sense of distributions, we have
$$\int_\Omega\ip{\nabla f}{\nabla g}{\rm d}\meas=-\int_\Omega g {\rm d}{\bf \Delta}f\ls \int_\Omega a(y) G_\varepsilon(x,y) {\rm d}\meas(y)\ls\int_\Omega a(y) G^\Omega(x,y) {\rm d}\meas(y),$$
where we have used $G_\varepsilon(x,y)\ls G^\Omega(x,y)$. Therefore, we conclude that
\begin{equation}\label{BR-equ-5.3}
	\int_\Omega f(y) p^\Omega_\varepsilon(x,y){\rm d}\meas(y)\ls \int_\Omega a(y) G^\Omega(x,y) {\rm d}\meas(y)+c \int_{\partial \Omega}f(y)h_x(y){\rm d}Per_\Omega(y)
\end{equation}
holds for any $\varepsilon>0$ with $2\sqrt{\varepsilon}<\delta(x).$

Finally, it suffices to check that
$$f(x)=\lim_{\varepsilon \to0}\int_\Omega f(y)p^\Omega_\varepsilon(x,y){\rm d}\meas(y)=\lim_{\varepsilon\to0}H^\Omega_\varepsilon f(x).$$
In fact, it holds for any $r>0$ that
\begin{equation*}
	\begin{split}
		|H_\varepsilon^\Omega f(x)-f(x) |&\ls \int_\Omega p^\Omega_\varepsilon(x,y)|f(y)-f(x)|{\rm d}\meas(y)\\
		&\ls {\rm osc}_{B_r(x)}f\cdot   \int_{\overline{B_r(x)}} p^\Omega_\varepsilon(x,y) {\rm d}\meas(y)+ 2M \int_{\Omega\setminus B_r(x)} p^\Omega_\varepsilon(x,y){\rm d}\meas(y),		
	\end{split}
\end{equation*}
where ${\rm osc}_{E}f=\sup_{y_1,y_2\in E}|f(y_1)-f(y_2)|$ and $M=\sup_{y\in\Omega}|f|.$
By the upper bound (\ref{BR-equ-2.8}) and $\meas(B_{\sqrt\varepsilon}(x))\gs C\cdot \varepsilon^{N/2}$, we have
$$\int_{\Omega\setminus B_r(x)} p^\Omega_\varepsilon(x,y){\rm d}\meas(y)\ls
C\frac{e^{-\frac{r^2}{C\varepsilon}}}{\varepsilon^{N/2}} \cdot \meas(\Omega\setminus B_r(x))\to 0,\quad {\rm as}\ \varepsilon\to0^+.$$
 Therefore, we get
$$\lim_{\varepsilon\to0^+}|H^\Omega_\varepsilon f(x)-f(x) |\ls {\rm osc}_{B_r(x)}f,$$
since $\int_{\overline{B_r(x)}} p^\Omega_\varepsilon(x,y){\rm d}\meas(y)\ls1$ for any $\varepsilon>0$. By continuity of $f$ and the arbitrariness of $r>0$, we have $\lim_{\varepsilon\to0^+}|H^\Omega_\varepsilon f(x)-f(x)|=0$.
The proof is finished.	
	\end{proof}

Next, we estimate the first term of the right-hand side of (\ref{BR-equ-5.2}) assuming that $\Omega$ is perimetrically regular in Definition \ref{BR-def-1.5}.
\begin{lemma}\label{BR-lem-5.4}
 Let $(X,d,\meas)$ be an $RCD(K,N)$ for $K< 0$ and $N> 1$, and let $ \Omega\subset X$ be a domain of finite perimeter. Let $x_0\in\partial\Omega$. Suppose that
(\ref{BR-equ-1-3}) holds for some constant $C_0>0$. Then we have
	$$\int_{\partial\Omega} d(x_0,y) \int_{d^2(x,y)}^{{\rm diam}(\Omega)^2}\frac{{\rm d}t  }{t\cdot\meas(B_{\sqrt{t}}(x))}{\rm d}Per_\Omega(y)\ls
			C\ln\left(\frac{{\rm diam}(\Omega)}{\delta(x)}\right),$$
for any $x\in\Omega$ with $d(x,x_0)<2\delta(x)$.
\end{lemma}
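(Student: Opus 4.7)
The strategy is to Fubini the double integral into a single integral in $r$ whose integrand becomes $\lesssim 1/r$ after using perimetric regularity together with doubling, so that integrating from $\delta(x)$ to ${\rm diam}(\Omega)$ produces exactly the claimed logarithm. Write $D:={\rm diam}(\Omega)$, $\delta:=\delta(x)$, and denote the left-hand side by $I$. Swapping integration order and substituting $r=\sqrt{t}$, we get
\begin{equation*}
I \;=\; \int_0^{D}\frac{2\,dr}{r\,\meas(B_r(x))}\int_{\partial\Omega\cap B_r(x)}d(x_0,y)\,dPer_\Omega(y).
\end{equation*}
Since any $y\in\partial\Omega$ satisfies $d(x,y)\gs\delta$, the inner integral vanishes whenever $r<\delta$, so effectively $r$ runs over $[\delta,D]$.

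Next, I would estimate the inner integral. For $y\in\partial\Omega\cap B_r(x)$, the hypothesis $d(x,x_0)<2\delta\ls 2r$ together with the triangle inequality gives $d(x_0,y)\ls d(x_0,x)+d(x,y)\ls 3r$ and $B_r(x)\subset B_{r+2\delta}(x_0)\subset B_{3r}(x_0)$, hence
\begin{equation*}
\int_{\partial\Omega\cap B_r(x)} d(x_0,y)\,dPer_\Omega(y)\;\ls\; 3r\cdot Per_\Omega(B_{3r}(x_0)).
\end{equation*}
I would then split the $r$-integral at $r=D/3$. On $[\delta,D/3]$ we have $3r<D$, so the perimetric regularity \eqref{BR-equ-1-3} yields $Per_\Omega(B_{3r}(x_0))\ls C_0\meas(B_{3r}(x_0))/(3r)$. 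The inclusion $B_{3r}(x_0)\subset B_{5r}(x)$ (from $d(x,x_0)\ls 2r$) combined with the doubling inequality \eqref{BR-equ-2.4} gives $\meas(B_{3r}(x_0))\ls C\meas(B_r(x))$ for a constant depending only on $N,K,D$. Plugging this in, the integrand collapses to $\ls C_0 C/r$, whose integral over $[\delta,D/3]$ produces $C\ln(D/\delta)$.

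Finally, on the residual range $r\in[D/3,D]$, I would simply use $Per_\Omega(B_{3r}(x_0))\ls Per_\Omega(X)$ together with $\meas(B_r(x))\gs\meas(B_{D/3}(x))\gs c\meas(\Omega)$ (from Bishop–Gromov \eqref{BR-equ-2.3} and the inclusion $\Omega\subset B_D(x)$), which bounds the resulting $r$-integral by a pure constant depending on $N,K,D,\meas(\Omega),Per_\Omega(X)$; this constant is absorbed into $C\ln(D/\delta)$ since $\delta\ls D$. The whole argument is rather mechanical; the only point requiring a bit of care is the passage from the perimetric regularity estimate based at $x_0$ to a bound involving $\meas(B_r(x))$, which is precisely where both the doubling property and the assumption $d(x,x_0)<2\delta(x)$ come into play, and is also the reason the final constant inherits dependence on $N,K,D$ beyond the raw constant $C_0$.
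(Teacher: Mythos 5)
Your proof is correct and follows essentially the same route as the paper's: Fubini (interchange of the $t$- and $y$-integrals), transfer of the ball from $x$ to $x_0$ via the triangle inequality and the hypothesis $d(x,x_0)<2\delta(x)$, the perimetric-regularity bound \eqref{BR-equ-1-3} applied at $x_0$, and doubling \eqref{BR-equ-2.4} to collapse the integrand to $\approx 1/r$. The only structural difference is that the paper applies \eqref{BR-equ-1-3} uniformly with radius $3\sqrt{t}$ (which can exceed ${\rm diam}(\Omega)$, where \eqref{BR-equ-1-3} is not literally stated), whereas you explicitly split at $r=D/3$ and treat the tail $[D/3,D]$ separately with the crude bound $Per_\Omega(B_{3r}(x_0))\ls Per_\Omega(X)$ and $\meas(B_r(x))\gs c\,\meas(\Omega)$; this is a mild gain in rigor. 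One small caveat on your last line: the claim that the tail constant ``is absorbed into $C\ln(D/\delta)$ since $\delta\ls D$'' is not literally true when $\delta(x)$ is close to ${\rm diam}(\Omega)$, since then $\ln(D/\delta)\to0$; to make the absorption legitimate one should observe that in the range $\delta>D/3$ the entire left-hand side is already directly bounded by $C\ln(D/\delta)$ (using $\meas(B_{\sqrt t}(x))\gs c\,\meas(\Omega)$ and $\int_{\partial\Omega}d(x_0,y)\,dPer_\Omega\ls D\,Per_\Omega(X)$), or simply note that the lemma is invoked via Theorem~\ref{BR-thm-5.1-hf}, where the logarithm carries the harmless extra factor $e$. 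The paper's own write-up glosses over the same edge case, so this is a shared imprecision rather than a defect peculiar to your argument.
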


\begin{proof}
For any $y\in\partial\Omega$, we have $d(x,y)\gs \delta(x)$.
The doubling property of $\meas$,  (\ref{BR-equ-2.4}),  implies for any $t\gs d^2(x,y)$ that
$$\meas(B_{\sqrt t}(x))\gs C\cdot \meas(B_{3\sqrt{t}}(x))\gs  C\cdot \meas(B_{3\sqrt{t}-2\delta(x)}(x_0))\gs C\cdot \meas(B_{\sqrt{t }}(x_0)),$$
where we used $2\sqrt t\gs 2d(x,y)\gs 2\delta(x).$ 	Then, by denoting $r(y):=d(x_0,y)$,
 \begin{equation*}
	\begin{split}
	&\quad \int_{\partial\Omega} r(y) \int_{d^2(x,y)}^{{\rm diam}(\Omega)^2}\frac{{\rm d}t}{t\cdot\meas(B_{\sqrt{t}}(x))}{\rm d}Per_\Omega(y)\\
	&\ls C\int_{\partial\Omega} \int_{d^2(x,y)}^{{\rm diam}(\Omega)^2}\frac{r(y)  }{t\cdot \meas(B_{\sqrt{t} }(x_0))}{\rm d}t{\rm d}Per_\Omega(y)\\
&= C
\int_{\delta^2(x)}^{{\rm diam}(\Omega)^2} \left(\int_{\partial\Omega\cap B_{ \sqrt{t}}(x)} r(y) {\rm d}Per_\Omega(y) \frac{1}{ t\cdot  \meas(B_{\sqrt{t}}(x_0))}\right) {\rm d}t\\
&\ls C'
\int_{\delta^2(x)}^{ {\rm diam}(\Omega)^2} \left(\int_{\partial\Omega\cap B_{3 \sqrt{t}}(x_0)} r(y) {\rm d}Per_\Omega(y) \frac{1}{ t\cdot  \meas(B_{\sqrt{t}}(x_0))}\right) {\rm d}t,
	\end{split}
\end{equation*}
where we have used $B_{\sqrt t}(x)\subset B_{\sqrt t+d(x_0,x)}(x_0)\subset B_{3\sqrt t}(x_0)$ for any $t\gs \delta^2(x)$.
By the assumption (\ref{BR-equ-1-3}), we have
$$\int_{\partial\Omega\cap B_{3 \sqrt{t}}(x_0)} r(y) {\rm d}Per_\Omega(y)\ls 3\sqrt t\cdot Per_\Omega(B_{3 \sqrt{t}}(x_0))\ls 3C_0\cdot \meas \big(B_{3 \sqrt{t}}(x_0)\big).$$
By (\ref{BR-equ-2.4}) again, it holds
\begin{equation*}
	\begin{split}
\int_{\partial\Omega} r(y) \int_{d^2(x,y)}^{{\rm diam}(\Omega)^2}\frac{{\rm d}t}{t\cdot\meas(B_{\sqrt{t}}(x))}{\rm d}Per_\Omega(y)& \ls C'
\int_{\delta^2(x)}^{ {\rm diam}(\Omega)^2}  \frac{3C_0\cdot\meas(B_{3\sqrt t}(x_0))}{ t\cdot  \meas(B_{\sqrt{t}}(x_0))}  {\rm d}t\\
&\ls C_1\cdot \int_{\delta^2(x)}^{ {\rm diam}(\Omega)^2} \frac{1}{ t} {\rm d}t,
	\end{split}
\end{equation*}
which completes the proof.
 \end{proof}

Now we are in the position to show the main result of this section.

\begin{proof}[Proof of Theorem \ref{BR-thm-5.1-hf}]
By combining Theorem \ref{BR-thm-5.3} and Lemma \ref{BR-lem-5.4}, we get
\begin{equation*}
\begin{split}
 f(x) & \ls c\cdot \delta(x)\cdot L\cdot\left(C\cdot\ln\left(\frac{{\rm diam}(\Omega)}{\delta(x)}\right)+\int_{\partial\Omega}d(y,x_0){\rm d}Per_\Omega(y)\right)\\
	&\ls c\cdot \delta(x)\cdot L\cdot\left(C\cdot\ln\left(\frac{{\rm diam}(\Omega)}{\delta(x)}\right)+ Per_\Omega(X)\cdot {\rm diam}(\Omega)\right)
	\end{split}
	\end{equation*}
	for any $x\in\Omega$ with $d(x,x_0)<2\delta(x)$. This finishes the proof.
\end{proof}

\section{Proof of Theorem \ref{BR-thm-1.8-main}}
In the last section, we will prove the main result --- Theorem \ref{BR-thm-1.8-main}.

We first recall the notion of $CAT(\kappa)$ space for any $\kappa\in\mathbb R$.   Let $(Y,d_Y)$ be a complete geodesic space, i.e., for any two points
$P_0,P_1\in Y$ there exists a point $S\in Y$, called a mid-point of $P_0$ and $P_1$, such that $$d_Y(P_0,S)=d_Y(S,P_1)=\frac{d_Y(P_0,P_1)}{2}.$$
Let $\kappa\in\mathbb R$. We say that a complete geodesic space $(Y,d_Y)$ is a $CAT(\kappa)$ space if for any three points $Q, P_0,P_1\in Y$ and any a mid-point $S$ of $P_0$ and $P_1$, there exist points  $\bar Q, \bar P_0, \bar P_1$ in the model space $\mathbb M^2_\kappa$ (2-dimensional simply-connected space form with sectional curvature $\kappa$) and the mid-point $\bar S$ of $\bar P_0$ and $\bar P_1$, such that
 $d_Y(Q,P_0)=\|\bar Q-\bar P_0\|_{\mathbb M^2_\kappa}$,  $d_Y(Q,P_1)=\|\bar Q-\bar P_1\|_{\mathbb M^2_\kappa}$,  $d_Y(P_0,P_1)=\|\bar P_0-\bar P_1\|_{\mathbb M^2_\kappa}$ and
 $$d_Y(Q,S)\ls\|\bar Q-\bar S\|_{\mathbb M^2_\kappa}.$$

Let $(X,d,\meas)$ be an $RCD(K,N)$ space with $N>1$ and $K\in\mathbb R$.
Suppose that  $\Omega\subset X$ is perimetrically regular and satisfies a uniformly exterior ball condition with radius $R_{\rm ext}\in(0,1)$.
Let $(Y, d_Y)$ be a $CAT(0)$ space.

  Given any $u_0\in Lip ({\overline\Omega},Y)$ with Lipschitz constant $L>0$,
 let $u\in W^{1,2}(\Omega,Y)$ be the harmonic map with boundary data  $u_0$ in Definition \ref{BR-def-1.2}.

\begin{proof}
	[Proof of Theorem \ref{BR-thm-1.8-main}]
Fix any $x\in \Omega$. Let $x^*\in \partial\Omega$ such that $d(x,x^*)=\delta(x).$
 From Theorem \ref{BR-thm-1.7}, we know that $u$ is continuous up to the boundary, and hence  it holds
$$d_Y\big(u(y),u(x^*)\big)=d_Y\big(u_0(y),u_0(x^*)\big)\leqslant L \cdot d(y,x^*),\quad \forall \ y\in\partial\Omega.$$
 On the other hand, we have that
 $${\bf \Delta} d_Y\big( u(\cdot),u(x^*)\big)\gs 0$$
 on $\Omega$ in the sense of distributions (see, \cite{MS22+, Gig23}).
  By using Theorem \ref{BR-thm-5.1-hf} to $f(\cdot):=d_Y\big(u(\cdot),u(x^*)\big)$, we conclude that
 \begin{equation}\label{BR-equ-6.1}
 	d_Y\big(u(z),u(x^*)\big)\ls c_1\cdot \delta(z)\cdot \ln\left(\frac{e\cdot {\rm diam}(\Omega)}{\delta(z)}\right)
 \end{equation}
 for any $z\in\Omega$ such that $d(z,x)\ls \frac{1}{10}\delta(x).$ 	 Let $R:=\frac{\delta(x)}{10}.$  For any $z\in B_R(x)$, it is clear, by triangle inequality, that
 $$\delta(z) =d(z,z^*)\gs d(x,z^*)-R\gs \delta(x)-R=\frac{9\delta(x)}{10},$$
 where $z^*\in\partial\Omega$ achieving $\delta(z)$, and
 $$d(z,x^*)\ls R+d(x,x^*)=\delta(x)+R=\frac{11\delta(x)}{10}.$$
 Therefore, we have $d(z,x^*)\ls 2\delta(z)$ and
 \begin{equation}
 	\label{BR-equ-6.2}
 	\frac{1}{2}\ls \frac{\delta(z)}{\delta(x)}\ls2
 \end{equation}
 for any $z\in B_R(x)$.
 By using the interior gradient estimate of $u$, established in \cite{MS22+,Gig23}, on the ball $B_{R}(x)\subset\Omega$, we have
 $$ \sup_{z',z''\in B_{R/2}(x),\ z'\not=z''}\frac{d_Y\big(u(z'), u(z'')\big)}{d(z',z'')}\ls  \frac{C_{N,K,{\rm diam}(\Omega)}}{R} \cdot \max_{z\in \overline{B_R(x)}} d_Y\big(u(z), u(x^*)\big).$$
 Combining with (\ref{BR-equ-6.1}) and (\ref{BR-equ-6.2}),  we conclude that
  \begin{equation*}
  	\begin{split}
  		 \sup_{z'\in B_{R/2}(x),\ z'\not=x}\frac{d_Y\big(u(z'), u(x)\big)}{d(z',x)}& \ls  \frac{C_{N,K,{\rm diam}(\Omega)}}{R} \cdot c_1\cdot \max_{z\in \overline{B_R(x)}}\left[\delta(z)\cdot\ln\left(\frac{e\cdot {\rm diam}(\Omega)}{\delta(z)}\right)\right]\\
  		 &\ls c_2\cdot \ln\left(\frac{2e\cdot {\rm diam}(\Omega)}{\delta(x)}\right).
  		 		   	\end{split}
  \end{equation*}
  It follows
  $$ {\rm Lip}u(x) \ls c_2\cdot \ln\left(\frac{2e\cdot {\rm diam}(\Omega)}{\delta(x)}\right). $$
 This is the desired result in Theorem \ref{BR-thm-1.8-main}. The proof is finished.
 \end{proof}

 \begin{proof}[Proof of Theorem \ref{BR-thm-1.2}] It is a direct consequence of Theorem \ref{BR-thm-1.8-main}, because the assumption that $\partial \Omega$ is Lipschitz continuous and bounded implies that it is perimetrically regular in Definition \ref{BR-def-1.5}(2).   \end{proof}

At last, we give a brief discussion on the case where the target $Y$ is a metric space with curvature bounded from above by a positive number  $\kappa$.

\begin{remark}\label{BR-remark-6.1}
	If $(Y,d_Y)$ is a $CAT(\kappa)$ for some $\kappa>0$ and let $u$ be a harmonic map with boundary data $u_0$ such that ${\rm diam}u_0(\Omega)<\frac{\pi}{2\sqrt{\kappa}}$, then  by an analogous argument as deriving (\ref{BR-equ-6.1}), one can obtian
	
	 \begin{equation}\label{BR-equ-6.3}
 	d_Y\big(u(z),u(x^*)\big)\ls c_2\cdot \delta(x)\ln\left(\frac{2e\cdot {\rm diam}(\Omega)}{\delta(x)}\right)
 \end{equation}
 for any $z\in\Omega$ such that $d(z,x)\ls \frac{1}{10}\delta(x).$ 	
	\end{remark}

\begin{remark}\label{BR-remark-6.2}
If $\Omega$ as in Theorem \ref{BR-thm-1.2} and $(Y,d_Y)$ be a $CAT(\kappa)$-space for some $\kappa>0$. Let $u$ be a harmonic map with Lipschitz data $u_0$ on $\partial\Omega$. Assume the image $u$ is contained in a ball of radius $<\pi/(2\sqrt\kappa)$. Then the assertion in Theorem \ref{BR-thm-1.2} still holds. This follows from (\ref{BR-equ-6.3}) and the gradient estimate in \cite{ZZZ19}.
  	\end{remark}

\end{document}